\numberwithin{equation}{section}
\theoremstyle{plain}
\newtheorem{theorem}{Theorem}[section]
\newtheorem{proposition}[theorem]{Proposition}
\newtheorem{question}[theorem]{Question}
\newtheorem{lemma}[theorem]{Lemma}
\newtheorem{example}[theorem]{Example}
\newtheorem{definition}[theorem]{Definition}
\theoremstyle{remark}
\newtheorem{remark}[theorem]{Remark}
\newcommand{\Ker}[1]{\ensuremath{\mathrm{Ker} (#1)}}
\newcommand{\Pic}[1]{\ensuremath{\mathrm{Pic} (#1)}}
\newcommand{\ad}[1]{\ensuremath{\mathrm{ad}  (#1)}}
\newcommand{\ENd}[1]{\ensuremath{\mathrm{End}  (#1)}}
\newcommand{\Img}[1]{\ensuremath{\mathrm{Im} (#1)}}
\newcommand{\cat}[1]{\ensuremath{\mathcal{#1}}}
\newcommand{\End}[2][]{\ensuremath{\mathrm{End}_{#1} (#2)}}
\newcommand{\at}[2][]{\ensuremath{\mathrm{at}_{#1} (#2)}}
\newcommand{\END}[2][]{\ensuremath{\mathcal{E}\mathit{nd}_{#1} (#2)}}
\newcommand{\id}[1]{\ensuremath{\mathbf{1}_{#1}}}
\newcommand{\rk}[2][]{\ensuremath{\mathrm{rk}_{#1}(#2)}}
\newcommand{\Z}{\ensuremath{\mathbb{Z}}}
\newcommand{\A}{\ensuremath{\mathbb{A}}}
\newcommand{\p}{\ensuremath{\mathbf{P}}}
\newcommand{\C}{\ensuremath{\mathbb{C}}}
\newcommand{\tr}[1]{\ensuremath{\mathrm{tr}(#1)}}
\newcommand{\struct}[1]{\ensuremath{\mathcal{O}_{#1}}}
\newcommand{\HOM}[3][]{%
  \ensuremath{\mathcal{H}\mathit{om}_{#1}(#2, #3)}}
\newcommand{\coh}[3]{\ensuremath{\mathrm{H}^{#1}(#2,\,#3)}}
\begin{document}

\title[Moduli space of Lie algebroid connections]{Line bundles on the moduli space of Lie algebroid connections over a curve }

\author[I. Biswas]{Indranil Biswas}

\address{Department of Mathematics, Shiv Nadar University, NH91, Tehsil Dadri,
Greater Noida, Uttar Pradesh 201314, India}

\email{indranil.biswas@snu.edu.in, indranil29@gmail.com}

\author{Anoop Singh}

\address{Department of Mathematical Sciences, Indian Institute of Technology (BHU), Varanasi, 221005, India} 
\email{anoopsingh.mat@iitbhu.ac.in}

\subjclass[2010]{14D20, 14C22, 14H05, 14M20}
  \keywords{Lie algebroid connection, moduli space, Higgs bundle, Picard group, regular function}

\begin{abstract}
We explore algebro-geometric properties of the moduli space of holomorphic Lie algebroid ($\cat{L }$) connections on a compact Riemann 
surface $X$ of genus $g \,\geq\, 3$. A smooth compactification of the moduli space of $\mathcal{L}$-connections, such that 
underlying vector bundle is stable, is constructed; the complement of the moduli space
inside the compactification is a divisor. A criterion for the numerical effectiveness of the boundary divisor is given.
We compute the Picard group of the moduli space, and analyze Lie algebroid Atiyah bundles 
associated with an ample line bundle. This enables us to conclude that regular functions on the space of certain Lie algebroid 
connections are constants. Moreover, under some condition, it is shown that the moduli space of $\mathcal{L}$-connections does not admit 
non-constant algebraic functions. Rationally connectedness of the moduli spaces is explored.
\end{abstract}

\maketitle

\tableofcontents

\section{Introduction and main results}
\label{Intro}

The notion of Lie algebroid was introduced by J. Pradines \cite{P1, P2} to study the differential groupoids. On the other hand, it 
naturally arises from the properties of space of vector fields on a smooth manifold. The Lie algebroids have been studied in different 
categories namely smooth ($\cat{C}^\infty$), holomorphic as well as algebraic. The papers \cite{KM} \cite{BOM}, \cite{CS}, 
\cite{ELW}, \cite{WE}, \cite{RF} etc. (of course there are many more) have developed the theory of Lie algebroids and Lie algebroid 
connections over manifolds, and varieties from different perspectives. The notion of relative holomorphic Lie algebroid connections was 
introduced in \cite{FB}.

The moduli space of Lie algebroid connections has received great attention over the years because this is a natural generalization of the 
moduli space of holomorphic, logarithmic and meromorphic connections as well as of decorated vector bundles (see Remark \ref{rem:2}).

The Picard group of the moduli space of stable vector bundles over a smooth projective curve has been studied in \cite{DN}, \cite{R}. 
Additionally, the Picard group and regular functions have been investigated for the moduli space of logarithmic connections singular over 
a finite subset of a compact Riemann surface (\cite{BR}, \cite{AS1}, and \cite{AS2}).

Algebro-geometric invariants play a crucial role when studying the properties of any  moduli space.  
In the present article, our aim is to compute algebro-geometric invariants, such as the Picard group, regular function, rational connectedness, of the moduli space of Lie algebroid connections over a compact Riemann surface. We also provide a natural 
compactification of the moduli space of Lie algebroid connections whose underlying vector bundle is stable.

In \cite{S1, S2}, Simpson introduced the notion of $\Lambda$-modules over a smooth projective variety and constructed a moduli space of 
semistable $\Lambda$-modules. In \cite{T1, T2}, Tortella proved an equivalence between the category of split almost polynomial sheaves of 
rings of differential operators and the category of Lie algebroids. Such an equivalence induces a correspondence between the category of 
integrable Lie algebroid connections and the category of modules over the split almost polynomial sheaves of rings of differential 
operators associated with the Lie algebroid; this equivalence preserves the semistability condition, and hence one has the moduli space of Lie 
algebroid connections. Moreover, Krizka \cite{LK1, LK2} used analytic tools to construct moduli space of Lie algebroid connections. In 
\cite{AO}, authors have studied the motives of the Hodge moduli space of Lie algebroid $\lambda$-connections, where $\lambda \,\in\, \C$.

Let $X$ be a compact Riemann surface of genus $g \,\geq\, 2$. We fix a holomorphic Lie algebroid (see Definition \ref{def:1} and Example 
\ref{exmp:1}) $\cat{L} \,=\, (L,\, [.,.],\, \sharp)$ of rank one over $X$ such that $\deg{L^*} \,>\, 2 g -2$, where $L^*$ is the dual of the line 
bundle $L$.
 
The contents of the paper are as follows. 

Section \ref{pre} recalls the notion of holomorphic $\cat{L}$-connection on a holomorphic vector bundle $E$ over $X$ (see Definition 
\ref{def:3}).
Section \ref{Mod} discusses some basics of the moduli space of $\cat{L}$-connections. Throughout this article, we fix integers $r 
\,\geq\, 2$ and $d$ such that $r$ and $d$ are coprime. Let $\cat{M}_{\cat{L}}(r,d)$ denote the moduli space of stable $\cat{L}$-connections 
over $X$ of rank $r$ and degree $d$. Then $\cat{M}_{\cat{L}}(r,d)$ is an irreducible smooth quasi-projective variety of dimension $1 + 
r^2 \deg{L^*}$ (see Theorem \ref{thm:1}).

Section \ref{Comp} considers the moduli space 
$$\cat{M}'_{\cat{L}}(r,d) \,\,\subset\,\, 
\cat{M}_{\cat{L}}(r,d),$$ which is a
 Zariski open dense subset  of $\cat{M}_{\cat{L}}(r,d)$
consisting of those $\cat{L}$-connections for which the underlying vector bundle is stable. Let $\cat{U}(r,d)$ be the moduli space of stable vector bundles of rank $r$ and degree $d$. Then, $\cat{U}(r,d)$
is a smooth projective variety of dimension $r^2(g-1)+1$. 
There is a natural projection 
\begin{equation}
\label{eq:0.1}
p \,\,:\,\, \cat{M}'_{\cat{L}}(r,d) \,\,\longrightarrow\,\, \cat{U}(r,d)
\end{equation}
defined by the forgetful map $(E,\, \nabla_\cat{L}) \,\longmapsto\, E$. We recall the notion of torsor (see Definition \ref{def:4});
it is used in showing that there exists a smooth compactification of the moduli space $\cat{M}'_{\cat{L}}(r,d)$ (see Theorem \ref{thm:1.1}).
Using this smooth compactification of $\cat{M}'_{\cat{L}}(r,d)$,
the Picard group of the moduli space $\cat{M}_{\cat{L}}(r,d)$ is computed . More precisely,
the following is proved (see Theorem \ref{thm:1.2}):
$$\Pic{\cat{M}_{\cat{L}}(r,d)} \,\,\cong\,\, \Pic{\cat{U}(r,d)}.$$

Section \ref{algebraic} investigates the moduli space of $\cat{L}$-connections with fixed determinant. 
Fix a holomorphic line bundle $\xi$ over $X$ with degree $d$ and also fix a holomorphic $\cat{L}$-connection
 $\nabla^\xi_{\cat{L}}$ on $\xi$. The moduli space $\cat{M}_{\cat{L}}(r,\xi)$ that is studied
is described in \eqref{eq:a38}. Consider the moduli space $\cat{M}'_{\cat{L}}(r,\xi) \,\subset\, \cat{M}_{\cat{L}}(r,\xi)$
parametrizing those $\cat{L}$-connections whose underlying vector bundle is stable. The following
is proved (see Proposition \ref{prop:6}):
$$ \Pic{\cat{M}_{\cat{L}}(r, \xi)} \,\,\cong \,\, \Pic{\cat{M}'_\cat{L}(r,\xi)}\cong \Pic{\cat{U}(r,\xi)} \cong \Z,$$
where $\cat{U}(r,\xi)$ denotes the moduli space stable vector bundles of rank $r$ with fixed determinant $\xi$.
 
Let $\cat{P}_\xi $ (see \eqref{eq:a42}) denote the moduli space parametrizing the $L^*$--twisted Higgs bundles $(E,\, \phi)$ on $X$
of rank $r$  such that $E$ is stable and $\bigwedge^rE \,\cong\, \xi$ with $\tr{\phi} \,=\, 0$. Then, $\cat{P}_\xi$ is a vector
bundle over $\cat{U}(r,\xi)$. The dual vector bundle $\cat{P}^*_\xi$ is a Lie algebroid over 
$\cat{U}(r,\xi)$; this construction of Lie algebroid structure on  $\cat{P}^*_\xi$ has been done in \cite[section 4.6]{FB1}.
Let $\Theta$ be the generator of the ample line bundles over $\cat{U}(r,\xi)$. Consider the space 
$\cat{C}onn_{\cat{P}_\xi^*}(\Theta)$ of all  $\cat{P}_\xi^*$-connection on $\Theta$.

Under the assumption that $g\, \geq\, 3$, we compute the space of all global functions on
$\cat{C}onn_{\cat{P}_\xi^*}(\Theta)$ (see Theorem \ref{thm1} and Remark \ref{rem1}).

In Section \ref{Div} we show that the moduli space $\cat{M}_{\cat{L}}(r,d)$ is not rational (see Proposition \ref{thm:6.3}). It
is also shown that $\cat{M}_{\cat{L}}(r,\xi)$ is rationally connected (see Proposition \ref{prop:6.5}).

Let $\p (\cat{F}_\xi)$ be the smooth compactification of the moduli space $\cat{M}'_{\cat{L}}(r, \xi)$ as
in Proposition \ref{prop:5}, and let $${\bf H_0} \,:=\, \p (\cat{F}_\xi) \setminus \cat{M}'_{\cat{L}}(r,\xi)$$
be the smooth divisor. Then, we give a criterion for the numerical effectiveness of the divisor ${\bf H_0}$
(see Proposition \ref{prop:1.5}).
A similar result (see Proposition \ref{prop:8}) can be shown for the divisor $${\bf H} \,\,:=\,\, \p (\cat{F}) \setminus
\cat{M}'_{\cat{L}}(r,d),$$ where $\p (\cat{F})$ is the smooth compactification of the moduli space $\cat{M}'_{\cat{L}}(r,d)$
as in Theorem \ref{thm:1.1}.

\section{Preliminaries}\label{pre}

Let $X$ be a compact connected Riemann surface of genus $g \,\geq\, 2$. Let $T_X$  and $\Omega^1_X$ respectively  denote the holomorphic tangent and cotangent bundle on $X$. 

\begin{definition}\label{def:1}
A {\bf (holomorphic) Lie algebroid} over $X$ is a triple
$\cat{L}\,=\, (L,\, [\cdot,\, \cdot],\, \sharp) $ consisting of
\begin{enumerate}
\item a holomorphic vector bundle $L$ on $X$,
\item a $\C$-bilinear  and skew-symmetric map 
$$[\cdot, \, \cdot] \,: \,L \otimes_{\C} L \,\longrightarrow\, L,$$ called the Lie bracket,
\item a vector bundle homomorphism $$\sharp \,:\, L \,\longrightarrow\, T_X,$$ called the {\bf anchor map} that
induces a homomorphism of Lie algebras from
the sheaf of sections of $L$ to the sheaf of sections of $T_X$ satisfying the following conditions:
\end{enumerate}
\begin{enumerate}
\item[a.] $[u,\, [v,\, w]] + [v,\, [w,\, u]] + [w,\, [u,\, v]] \,=\, 0$ (Jacobi identity),
\item[b.] $[u,\, fv] \,=\, f[u,\, v] + \sharp(u)(f)v $ (Leibniz identity),
\end{enumerate}
for every local holomorphic sections $u, v, w$ of $L$ and every local holomorphic function $f$ on $X$.
\end{definition}

{}From Definition \ref{def:1} it is clear that the space of sections of $L$ has a Lie algebra structure. The rank $\rk{\cat{L}}$ and degree 
$\deg{\cat{L}}$ of a Lie algebroid $\cat{L}$ are by definition the rank and the degree of the underlying vector bundle $L$ respectively.

By a Lie algebroid we will always mean holomorphic Lie algebroid.

\begin{example}\label{exmp:1}
\mbox{} Followings are the well-known examples of Lie algebroids.
\begin{enumerate}
\item \label{a} {\bf Tangent Lie algebroid:}\, The holomorphic tangent bundle 
$T_X$ is a holomorphic Lie algebroid if we take the 
Lie bracket to be the usual Lie bracket operation defined for holomorphic vector fields and take the
anchor map $\sharp$ to be the identity map $\id{T_X}$. We denote it by $\cat{T}_X
\,=\, (T_X,\, [\cdot,\, \cdot],\, \id{T_X})$.

\item \label{b}{\bf Log Lie algebroid:}\, Let $S\,=\,\{x_1,\, \cdots, \,x_m\}$ be a finite subset of $X$;
the corresponding reduced effective divisor $x_1 + \cdots + x_m$ is also denoted
by $S$. Let $T_X(- \log S) \,:=\, T_X \otimes \struct{X}(-S) \subset T_X $ be the line bundle over $X$ consisting of those vector
fields which vanish on $S$. Note $T_X(-\log S)$ is closed under the Lie bracket operation of vector fields.
Take the anchor map to be the inclusion map
$$\sharp \,\,: \,\,T_X(- \log S) \,\,\hookrightarrow\,\, T_X.$$
Then $T_X(-\log S)$ is a Lie algebroid which is called the Logarithmic or Log Lie algebroid, and it is denoted by 
$\cat{T}_X(- \log S) \,:=\, (T_{X}(-\log S),\, [\cdot, \,\cdot], \,\sharp)$.

\item \label{c} {\bf Trivial Lie algebroid:}\, A holomorphic vector 
bundle can be equipped with a Lie algebroid structure
by considering the bracket and anchor maps to be the zero homomorphisms.
Such a Lie algebroid is called the trivial Lie algebroid.   
It is denoted by $\cat{L}_0 \,=\, (L,\, 0,\, 0)$, where $L$ is a holomorphic vector bundle over $X$.
\end{enumerate}
\end{example}

We can define a morphism between Lie algebroids that are not over the same compact Riemann surface.  There are actually two different 
definitions (see, e.g., \cite{KM}). We will mostly be dealing with the Lie algebroid over a single compact Riemann surface $X$. In that 
case, the two definitions coincide.

\begin{definition}
\label{def:2}
Let $\cat{L} \,:=\, (L,\, [\cdot,\, \cdot]_L,\, \sharp_L)$, and 
$\cat{L}' \,:=\, (L',\, [\cdot,\, \cdot]_{L'},\, \sharp_{L'})$ be two Lie algebroids over $X$. A {\bf Lie algebroid morphism}
$\Phi \,:\, \cat{L} \,\longrightarrow\, \cat{L}'$ is a vector bundle morphism
 $\Phi \,:\, L \,\longrightarrow\, L'$ which is a $\C$-Lie algebra homomorphism
satisfying the condition that $\sharp_{L'} \circ \Phi \,=\, \sharp_L$.

We say that $\cat{L}$ and $\cat{L}'$ are isomorphic if the vector bundle morphism $\Phi$ is an isomorphism.
\end{definition}

For a Lie algebroid $\cat{L} \,=\, (L,\, [\cdot,\, \cdot],\, \sharp)$, the morphism $\sharp \,:\, L \,\longrightarrow\, T_X$ is a 
Lie algebroid morphism between $\cat{L}$ and the tangent Lie algebroid $\cat{T}_X$.

\begin{remark}
In \cite[Theorem 3.18]{AO}, authors have classified the Lie algebroid structures on line bundles over $X$.
Also, in \cite[Corollary 3.19]{AO}, it has been shown that the tangent bundle $T_X$ admits only two non-isomorphic
Lie algebroid structures: the trivial one $(T_X, \,0,\, 0)$ and the canonical one given by the Lie bracket of vector fields
$\cat{T}_X \,=\, (T_X,\, [\cdot ,\, \cdot]_{\text{Lie}},\, \sharp \,=\, \id{T_X})$.
\end{remark}

Let $\sharp^* \,: \, \Omega^1_X \,\longrightarrow\, L^*$ be the dual of the anchor map, and let 
$d_{\cat{L}} \,:\, \struct{X} \,\longrightarrow \,L^*$ be the $\C$-derivation defined by 
\begin{equation}
\label{eq:1}
d_{\cat{L}} \,= \, \sharp^* \circ \text{d},
\end{equation}
where 
$\text{d}\,:\, \struct{X} \,\longrightarrow\, \Omega^1_X$ is the de Rham derivation.

\begin{definition}
\label{def:3}
Let $\cat{L} \,=\, (L,\, [\cdot,\, \cdot], \,\sharp)$ be a Lie algebroid over $X$, and let $E$ be a holomorphic vector bundle over
$X$. An $\cat{L}$-connection on $E$ is a $\C$-linear map between the sheaves of holomorphic sections
$$\nabla_{\cat{L}} \,:\, E \,\longrightarrow\, E \otimes L^*$$
such that 
$$\nabla_{\cat{L}} (f s) \,=\, f \nabla_{\cat{L}} (s) + s \otimes d_{\cat{L}}(f),$$
for all locally defined sections $s$ of $E$ and $f$ of 
$\struct{X}$, where $d_{\cat{L}}$ is defined in \eqref{eq:1}. The rank and degree of an $\cat{L}$-connection
$(E, \, \nabla_\cat{L})$ are respectively the rank and degree of the underlying vector bundle $E$.
\end{definition}

Using the anchor map $\sharp$, the exterior derivation $d_{\cat{L}}$ can be extended to higher order 
exterior powers of $L^*$ such that the composition of 
successive derivations vanishes (for more details see \cite{FB}).
Extend $\C$-derivation $d_\cat{L} \,:\, \struct{X} \,\longrightarrow\, L^*$
as defined in \eqref{eq:1} to an operator
$$d^p_\cat{L} \,:\, \bigwedge^p L^* \,\longrightarrow\, \bigwedge^{p+1} L^*$$
by setting 
$$d^p_\cat{L}(\alpha)(a_1, \,\cdots,\, a_{p+1}) \,=\, \sum_{i =1}^{p+1} (-i)^{i+1} \sharp a_i \alpha(a_1,\, \cdots, \,\widehat{a_i},\,
\cdots,\, a_{p+1})$$
$$+ \sum_{i < j} (-1)^{i+j} \alpha ([a_i, \,a_j], a_1, \,\cdots,\, \widehat{a}_i, \,\cdots,\, \widehat{a}_j, \,\cdots,\, a_{p+1}),$$
where $\alpha$ is a section of $\bigwedge^p L^*$, 
$a_i$'s are sections of $L$ for $i \,=\, 1,\, \cdots,\, p+1$,
and $[\cdot,\, \cdot] \,:\, L \otimes_{\C} L\, \longrightarrow\, L$ is the Lie bracket of the Lie algebroid $\cat{L}$. Now, it is
easy to check that $$d^{p+1}_\cat{L} \circ d^p_\cat{L} \,=\, 0,$$
for every $p \,\geq\, 0$, where $d^0_\cat{L} \,=\, d_{\cat{L}}$.
Thus, we get a complex $(\bigwedge^\bullet L^*,\, d^\bullet_\cat{L})$ which is known as the {\bf Chevalley--Eilenberg--de Rham}
complex associated with $\cat{L}$. 

Let $\nabla_{\cat{L}}$ be an $\cat{L}$-connection on $E$. As in the classical case, we shall extend $\nabla_\cat{L}$ to an operator
between the sheaves of holomorphic sections
$$\nabla_{\cat{L}} \,:\, E \otimes \bigwedge^p L^* \,\longrightarrow\, E \otimes \bigwedge^{p+1} L^*$$ 
by setting 
$$\nabla_{\cat{L}}(s \otimes \alpha) \,=\, (\nabla_\cat{L}s) \wedge \alpha + s \otimes d_\cat{L}^p(\alpha)$$
for all locally defined holomorphic sections $s$ of $E$ and $\alpha$ of $\bigwedge^p L^*$.

Now define the $\cat{L}$-curvature $R_{\cat{L}}$ of $\nabla_\cat{L}$ as follows
$$
R_{\cat{L}} \,\,:=\,\, \nabla_\cat{L} \circ \nabla_\cat{L} \,\,:\,\, E \,\,\longrightarrow\,\, E \otimes \bigwedge^2 L^*.$$
As in the classical case, $R_{\cat{L}}$ gives a global holomorphic section of $\END{E}\otimes \bigwedge^2L^*$ because it
is an $\struct{X}$-linear map.

An $\cat{L}$-connection $\nabla_\cat{L}$ is called {\it flat} or {\it integrable}
if the $\cat{L}$-curvature $R_{\cat{L}}$ vanishes identically.
The $\cat{L}$-curvature $R_\cat{L}$ satisfies an analogue of the  classical Bianchi identity.

\begin{example}
\label{exmp:2} We consider the Lie algebroid connections  associated to the Lie algebroids discussed in \text{Example} \eqref{exmp:1}. Let $E$ be a holomorphic vector bundle over $X$.
\begin{enumerate}
\item {\bf Flat holomorphic connection:}\, Consider the tangent Lie algebroid $\cat{T}_X$ as in Example \eqref{exmp:1}\eqref{a}. Then,
a $\cat{T}_X$-connection on $E$  is the usual flat holomorphic connection on $E$. The
flatness of the connection follows from the fact that 
$X$ is a Riemann surface which implies that $\Omega^i_X \,=\, 0$ for $i \,\geq\, 2$.

\item{\bf Flat logarithmic connection:}\, For the Lie algebroid $\cat{T}_X(- \log S)$ in Example 
\eqref{exmp:1}\eqref{b}, a $\cat{T}_X(- \log S)$--connection on $E$ is a flat logarithmic connection on $E$ singular over $S$ (see 
\cite{BM}, \cite{D}).

\item{\bf Twisted Higgs bundle:}\, Consider the trivial Lie algebroid $\cat{L}_0 \,=\, (L,\, 0,\, 0)$ as in Example
\eqref{exmp:1}\eqref{c}. Then a flat $\cat{L}_0$--connection on $E$ is a pair $(E,\, \nabla_{\cat{L}_0})$, where
$\nabla_{\cat{L}_0} \,:\, E \,\longrightarrow\, E \otimes L^*$ is an $\struct{X}$--linear morphism, which is nothing but the
$L^*$--twisted Higgs field on $E$. 
\end{enumerate}
\end{example}

As in \cite{A}, we now describe the first order Lie algebroid jet bundle and the corresponding Atiyah exact sequence.
Consider
$$J^1_{\cat{L}}(E) \,\,:=\,\, E \oplus (E \otimes {L}^*)$$
as a $\C$-module. We equip  $J^1_{\cat{L}}(E)$ with an $\struct{X}$--module structure as follows
$$f \cdot (s,\, \sigma)\,\, =\,\, (f s,\, f \sigma + s \otimes d_{\cat{L}}f ),$$
where $f$, $s$ and $\sigma$ are the local sections of $\struct{X}$, $E$ and $E \otimes \cat{L}^*$ respectively.
This $J^1_{\cat{L}}(E)$ is called the first order Lie algebroid jet bundle associated with $E$.

As in the usual case (see \cite{A}), we have the  $\cat{L}$-Atiyah sequence  associated with $E$ 
\begin{equation}\label{eq:4}
0 \,\longrightarrow\, E \otimes L^* \,\longrightarrow\, J^1_{\cat{L}}(E)\, \xrightarrow{\,\,\,p_E\,\,\,} E
\,\longrightarrow\, 0,
\end{equation}
where $p_E$ is the natural projection. The short exact sequence in \eqref{eq:4} need not split as an $\struct{X}$-module.
Let $$\at[\cat{L}]{E} \,\,\in\,\, \coh{1}{X}{\,\END{E}\otimes L^*}$$ be the extension class of the short exact sequence \eqref{eq:4},
which is known as \cat{L}-{\it Atiyah class} of $E$. A result similar to the following proposition has been proven in \cite[Proposition 17]{T1}.

\begin{proposition}
\label{prop:2.6}
 Let $E$ be a holomorphic vector bundle over $X$ and $\cat{L}$ a Lie algebroid over $X$. Then, the followings are equivalent:
\begin{enumerate}
\item $E$ admits an $\cat{L}$-connection.
\item The $\cat{L}$-Atiyah sequence \eqref{eq:4} associated with $E$ splits holomorphically.
\item The extension class $\at[\cat{L}]{E} \,\in \,\coh{1}{X}{\,\END{E}\otimes L^*}$ vanishes.
\end{enumerate}
\end{proposition}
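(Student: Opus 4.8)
The plan is to prove the cyclic chain of equivalences by first disposing of the two ``formal'' equivalences $(2)\Leftrightarrow(3)$ and $(1)\Leftrightarrow(2)$; this is the Lie algebroid transcription of Atiyah's classical argument for holomorphic connections.

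\emph{The equivalence $(2)\Leftrightarrow(3)$.} First I would note that \eqref{eq:4} is genuinely a short exact sequence of $\struct{X}$-modules: the prescription $f\cdot(s,\,\sigma) = (fs,\, f\sigma + s\otimes d_{\cat{L}}f)$ defines an $\struct{X}$-module structure on $J^1_{\cat{L}}(E)$ precisely because $d_{\cat{L}}$ satisfies the Leibniz rule $d_{\cat{L}}(fg) = f\,d_{\cat{L}}g + g\,d_{\cat{L}}f$ (which follows from \eqref{eq:1}), and with this structure both the inclusion $\sigma\mapsto(0,\sigma)$ and the projection $p_E$ are $\struct{X}$-linear. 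By construction the class of this extension in $\mathrm{Ext}^1_{\struct{X}}(E,\, E\otimes L^*)$ is $\at[\cat{L}]{E}$. Since $E$ is locally free, the local Ext sheaves $\mathcal{E}xt^i_{\struct{X}}(E,\, E\otimes L^*)$ vanish for $i\geq 1$, so the local-to-global spectral sequence yields the canonical identification
\[
\mathrm{Ext}^1_{\struct{X}}(E,\, E\otimes L^*)\,\cong\,\coh{1}{X}{\HOM{E}{E\otimes L^*}}\,=\,\coh{1}{X}{\END{E}\otimes L^*},
\]
under which $\at[\cat{L}]{E}$ corresponds to the Atiyah class in the statement. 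The standard fact that an extension of $\struct{X}$-modules splits if and only if its extension class vanishes then gives $(2)\Leftrightarrow(3)$.

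\emph{The equivalence $(1)\Leftrightarrow(2)$.} Next I would exhibit a bijection between $\struct{X}$-linear splittings of $p_E$ and $\cat{L}$-connections on $E$. Any $\struct{X}$-linear map $\lambda\colon E\to J^1_{\cat{L}}(E)$ with $p_E\circ\lambda = \id{E}$ can be written as $\lambda(s) = (s,\,\psi(s))$ for a $\C$-linear map $\psi\colon E\to E\otimes L^*$, and the requirement $\lambda(fs) = f\cdot\lambda(s)$, unwound via the module structure above, is exactly the identity $\psi(fs) = f\psi(s) + s\otimes d_{\cat{L}}f$; that is, $\psi$ is an $\cat{L}$-connection in the sense of Definition \ref{def:3}. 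Conversely an $\cat{L}$-connection $\nabla_{\cat{L}}$ produces the $\struct{X}$-linear splitting $s\mapsto(s,\,\nabla_{\cat{L}}s)$, and these two assignments are mutually inverse. Since the sequence \eqref{eq:4} splits holomorphically precisely when $p_E$ admits an $\struct{X}$-linear section, this is the equivalence $(1)\Leftrightarrow(2)$, and the three statements are equivalent.

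\emph{Main difficulty.} There is no serious obstacle: the result is a routine extension of the classical Atiyah class computation, and the only points that need care are (i) checking that the twisted addition on $J^1_{\cat{L}}(E)$ is an honest $\struct{X}$-module structure (a one-line verification from the Leibniz rule for $d_{\cat{L}}$), (ii) the Ext-to-cohomology identification above, which is standard on any smooth variety for locally free $E$, and (iii) the term-by-term translation in the second step turning $\struct{X}$-linearity of a section into the defining property of an $\cat{L}$-connection. All three are bookkeeping rather than substance.
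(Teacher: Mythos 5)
Your argument is correct and follows essentially the same route as the paper's own proof: the equivalence $(1)\Leftrightarrow(2)$ via the correspondence $s\mapsto(s,\,\nabla_{\cat{L}}s)$ between $\struct{X}$-linear splittings of $p_E$ and $\cat{L}$-connections, and $(2)\Leftrightarrow(3)$ via the standard fact that an extension splits exactly when its class vanishes. You merely spell out the bookkeeping (the module structure on $J^1_{\cat{L}}(E)$ and the $\mathrm{Ext}$-to-$\coh{1}{X}{\END{E}\otimes L^*}$ identification) that the paper leaves implicit.
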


\begin{proof} $(1) \iff (2)$:\, \, $E$ admits an $\cat{L}$-connection $D\,:\, E
 \,\longrightarrow\, E \otimes L^* $ if and only if the morphism $\phi \,:\, E \,\longrightarrow\, J^1_{\cat{L}}(E)$  defined by
$\phi(s) \,=\, (s,\, D(s))$ is $\struct{X}$-linear, where $s$ is a local section of $E$. 

$(2) \iff (3)$: It is a general fact that the short exact sequence splits if and only if the extension class vanishes.
\end{proof}

Now, we give a sufficient condition for the existence of Lie algebroid connection on a semistable vector bundle over $X$. For any vector 
bundle $E$ over $X$, the slope $\mu(E)$ of $E$ is defined by $$\mu (E) \,=\, \frac{\deg{E}}{\rk{E}}.$$

A vector bundle $E$ over $X$ is said to be semistable if for every non-zero proper subbundle $F$ of $E$, the
inequality $\mu (F) \,\leq\, \mu (E)$ holds.

The following proposition was proved in \cite{AO} (see \cite[Corollary 3.14]{AO}).

\begin{proposition}
\label{prop:1}
Let $E$ be a semistable vector bundle over $X$. Let $\cat{L} \,=\, (L,\, [\cdot,\, \cdot], \,\sharp)$ be a
Lie algebroid such that the vector bundle $L$ is semistable and  $\mu (L^*) \,>\, 2g-2$, where $L^*$ denotes the dual of $L$. Then,
$E$ admits an $\cat{L}$-connection.
Moreover, if $\rk{\cat{L}} \,=\, 1$, then $E$ admits an integrable $\cat{L}$--connection.
\end{proposition}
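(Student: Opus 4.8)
The plan is to reduce the existence of an $\cat{L}$-connection on $E$ to the vanishing of the obstruction class $\at[\cat{L}]{E} \,\in\, \coh{1}{X}{\,\END{E}\otimes L^*}$ supplied by Proposition \ref{prop:2.6}, and then to establish this vanishing by a semistability/slope argument. First I would observe that since $X$ is a curve, Serre duality gives
$$\coh{1}{X}{\,\END{E}\otimes L^*} \,\cong\, \coh{0}{X}{\,\END{E}\otimes L \otimes \Omega^1_X}^*.$$
So it suffices to prove that $\coh{0}{X}{\,\END{E}\otimes L \otimes \Omega^1_X} \,=\, 0$, i.e.\ that the vector bundle $\END{E}\otimes L \otimes \Omega^1_X$ has no nonzero holomorphic sections.

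The key step is the slope estimate. Since $E$ is semistable, $\ENd{E} \,=\, \END{E}$ is semistable of degree $0$ (this is the standard fact that endomorphism bundles of semistable bundles are semistable, valid in characteristic zero). Tensoring a semistable bundle by a semistable bundle keeps semistability, and tensoring by a line bundle shifts the slope; hence $\END{E}\otimes L \otimes \Omega^1_X$ is semistable of slope $\mu(L) + \deg\Omega^1_X \,=\, \mu(L) + 2g-2$. By hypothesis $\mu(L^*) \,>\, 2g-2$, i.e.\ $\mu(L) \,<\, -(2g-2)$, so the slope of $\END{E}\otimes L \otimes \Omega^1_X$ is strictly negative. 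A semistable bundle of negative slope on a projective curve has no nonzero global sections (a section would give an injection $\struct{X} \hookrightarrow \END{E}\otimes L \otimes \Omega^1_X$, contradicting semistability, since $\mu(\struct{X}) = 0 > \mu$ of the target). Therefore $\coh{1}{X}{\,\END{E}\otimes L^*} \,=\, 0$, the class $\at[\cat{L}]{E}$ vanishes, and by Proposition \ref{prop:2.6} the bundle $E$ admits an $\cat{L}$-connection.

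For the last assertion, suppose $\rk{\cat{L}} \,=\, 1$, so $L$, and hence $L^*$, is a line bundle. Then $\bigwedge^2 L^* \,=\, 0$, so the curvature $R_{\cat{L}} \,=\, \nabla_\cat{L}\circ\nabla_\cat{L}$ is forced to be a section of $\END{E}\otimes \bigwedge^2 L^* \,=\, 0$, hence vanishes identically. Thus \emph{any} $\cat{L}$-connection on $E$ is automatically integrable, and the one produced above does the job.

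The main obstacle is really just making sure the semistability-is-preserved-under-tensor-product input is correctly invoked: it requires characteristic zero (which holds, as $X$ is a Riemann surface) and the statement that $\ENd{E}$ is semistable when $E$ is. Everything else — Serre duality, the slope bookkeeping, and vanishing of sections of a negative-slope semistable bundle — is routine, and the integrability in the rank-one case is immediate from $\bigwedge^2 L^* = 0$.
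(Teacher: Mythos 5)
Your proposal is correct and follows essentially the same route as the paper: Serre duality reduces the vanishing of $\at[\cat{L}]{E}$ to showing $\coh{0}{X}{\,\END{E}\otimes L \otimes \Omega^1_X}\,=\,0$, which follows from semistability and the negative slope $\mu(L)+2g-2\,<\,0$, and the rank-one integrability is immediate from $\bigwedge^2 L^*\,=\,0$. The only difference is that you spell out the tensor-product semistability input that the paper leaves as ``easy to see.''
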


\begin{proof}
Under the conditions on $E$ and $L$, it is easy to see that $$\mu(\End{E}\otimes L \otimes \Omega^1_X) \,<\, 0,$$
and therefore
$$\coh{0}{X}{\,\End{E}\otimes L \otimes \Omega^1_X} \,= \,0. $$
From Serre duality,
$$\at[\cat{L}]{E} \,\in\, \coh{1}{X}{\,\End{E} \otimes L^*}
\,=\, \coh{0}{X}{\,\End{E}\otimes L \otimes \Omega^1_X}^* \,=\, 0 .$$
Moreover, if $\rk{\cat{L}} \,=\, 1$, we have $\bigwedge^2 L^* =\, \,0$, and hence any $\cat{L}$-connection on $E$ is 
integrable.
\end{proof}

\section{Moduli space of $\cat{L}$-connections}\label{Mod}

In this section, we describe the moduli space of Lie 
algebroid connections over $X$. Henceforth, we assume that the rank of Lie algebroid
$\cat{L} \,=\, (L,\, [\cdot,\, \cdot], \,\sharp)$ is one, that is, $L$ is a line bundle.

\begin{definition}
An $\cat{L}$-connection $(E,\, \nabla_{\cat{L}})$ is said to semistable (respectively, stable) if for every non-zero proper
subbundle $F$ of $E$, which is invariant under 
$\nabla_{\cat{L}}$, that is, $\nabla_{\cat{L}}(F) \,\subset\, F \otimes L^*$, the inequality
$$\mu(F) \,\leq\, \mu(E)\ \ \,\, (\text{respectively, }\ \, \mu(F)\, < \, \mu(E))$$
holds, where $\mu(E)$ denotes the slope of $E$ defined above.
\end{definition}

A morphism between $\cat{L}$-connections $(E,\, \nabla_{\cat{L}})$ and $(E',\, \nabla'_{\cat{L}})$ is a  morphism of vector bundles
$$\phi \,:\,E \,\longrightarrow\, E'$$
such that the following diagram is commutative:
\begin{equation}
\label{eq:cd2}
\xymatrix@C=4em{
E \ar[r]^{\nabla_{\cat{L}}} \ar[d]^{\phi} & E \otimes L^* \ar[d]^{\phi \otimes \id{L^*}} \\
E' \ar[r]^{\nabla'_{\cat{L}}} & E'\otimes L^* .}
\end{equation}
We say that $(E, \,\nabla_{\cat{L}})$
and $(E',\, \nabla'_{\cat{L}})$ are isomorphic if $\phi$ is an isomorphism.

\begin{lemma}
\label{lem:1}
Let $\cat{L}$ be a Lie algebroid over $X$. Let 
$(E,\, \nabla_\cat{L})$ and $(E',\, \nabla'_\cat{L})$ be semistable $\cat{L}$--connections.
Then the following statements hold:
\begin{enumerate}
\item \label{a1} Suppose $(E,\, \nabla_{\cat{L}})$ and $(E',\, \nabla'_{\cat{L}})$ are stable and $\mu(E) \,=\, \mu(E').$
If $$\phi \,:\, (E, \,\nabla_\cat{L})\,\longrightarrow\,
(E',\,\nabla'_\cat{L})$$ is a non-zero morphism of $\cat{L}$--connections, then $\phi$ is an isomorphism.

\item \label{a2} If $(E,\, \nabla_\cat{L})$ is stable, then the only endomorphism of $(E,\, \nabla_\cat{L})$ are constant scalar
multiplications.
\end{enumerate}
\end{lemma}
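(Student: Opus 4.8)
The plan is to adapt the standard ``stable objects are simple'' argument from the theory of vector bundles and Higgs bundles to the Lie algebroid setting, using the fact that kernels and images of morphisms of $\cat{L}$-connections inherit $\cat{L}$-connections. First I would record the basic structural observation: if $\phi\colon (E,\nabla_\cat{L})\longrightarrow(E',\nabla'_\cat{L})$ is a morphism of $\cat{L}$-connections, then $\Ker{\phi}\subset E$ is a subbundle (after saturating, since a priori it is only a subsheaf; but $E/\Ker{\phi}\hookrightarrow E'$ is torsion-free, hence $\Ker{\phi}$ is already saturated) which is $\nabla_\cat{L}$-invariant, and similarly $\Img{\phi}\subset E'$ is a $\nabla'_\cat{L}$-invariant subsheaf whose saturation is $\nabla'_\cat{L}$-invariant. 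Invariance follows directly from the commutativity of the diagram in \eqref{eq:cd2}: if $s$ is a local section of $\Ker{\phi}$, then $(\phi\otimes\id{L^*})(\nabla_\cat{L}s)=\nabla'_\cat{L}(\phi(s))=0$, so $\nabla_\cat{L}s\in\Ker{\phi}\otimes L^*$.

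For part \eqref{a1}, assume $\phi\neq 0$ and both connections are stable with $\mu(E)=\mu(E')$. Consider $K:=\Ker{\phi}$. If $K\neq 0$, it is a proper (since $\phi\neq 0$) nonzero $\nabla_\cat{L}$-invariant subbundle of $E$, so stability of $(E,\nabla_\cat{L})$ gives $\mu(K)<\mu(E)$; hence $\mu(E/K)>\mu(E)=\mu(E')$. But $E/K$ injects into $E'$ with torsion-free quotient, and the image $\phi(E)\subset E'$ is a nonzero $\nabla'_\cat{L}$-invariant subsheaf of $E'$; its saturation $\overline{\phi(E)}$ satisfies $\mu(\overline{\phi(E)})\ge\mu(E/K)>\mu(E')$, contradicting stability of $(E',\nabla'_\cat{L})$ unless $\overline{\phi(E)}=E'$ and equality holds, which it does not. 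Therefore $K=0$, so $\phi$ is injective. Then $\phi(E)\cong E$ is a $\nabla'_\cat{L}$-invariant subsheaf of $E'$ of the same slope as $E'$; its saturation $\overline{\phi(E)}$ has slope $\ge\mu(E)=\mu(E')$, so by stability of $(E',\nabla'_\cat{L})$ we must have $\overline{\phi(E)}=E'$, and then $\mu(\phi(E))=\mu(E')$ forces $\phi(E)=\overline{\phi(E)}=E'$ (a subsheaf of full rank with the same degree as its saturation equals the saturation). Hence $\phi$ is an isomorphism of vector bundles, and it is automatically an isomorphism of $\cat{L}$-connections by \eqref{eq:cd2}.

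For part \eqref{a2}, let $\psi\colon(E,\nabla_\cat{L})\longrightarrow(E,\nabla_\cat{L})$ be an endomorphism. Since $X$ is a compact Riemann surface and $E$ is a vector bundle on it, $\psi$ has a characteristic polynomial with constant coefficients, so it has an eigenvalue $\lambda\in\C$ (for instance, evaluate at a point and pick an eigenvalue of the resulting matrix; or argue via $\coh{0}{X}{\ENd{E}}$ being a finite-dimensional $\C$-algebra). Then $\psi-\lambda\cdot\id{E}$ is again an endomorphism of the $\cat{L}$-connection $(E,\nabla_\cat{L})$ — note $\lambda\cdot\id{E}$ trivially commutes with $\nabla_\cat{L}$ — and it is not an isomorphism (its kernel is nonzero at the chosen point). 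By part \eqref{a1} applied with $(E',\nabla'_\cat{L})=(E,\nabla_\cat{L})$, a non-isomorphism endomorphism must be zero, so $\psi=\lambda\cdot\id{E}$.

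The only genuinely delicate point — and the one I would be most careful about — is the passage between $\nabla_\cat{L}$-invariant subsheaves and $\nabla_\cat{L}$-invariant subbundles: one must check that the saturation of a $\nabla_\cat{L}$-invariant coherent subsheaf is again $\nabla_\cat{L}$-invariant (true because saturation is intrinsic and $L^*$ is locally free, so tensoring with $L^*$ commutes with saturation, and the connection $\nabla_\cat{L}$ is only $\C$-linear but the Leibniz rule shows the saturation—being the preimage of the torsion of the quotient—is preserved), and that these saturations do not decrease slope. Everything else is a direct transcription of the classical simplicity argument, the stability condition in the Definition preceding this Lemma being tailored precisely so that $\nabla_\cat{L}$-invariant subbundles are the relevant test objects.
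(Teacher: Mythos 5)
Your proof is correct and follows essentially the same route as the paper's: invariance of kernel and image under the $\cat{L}$-connections plus the stability slope inequalities for part \eqref{a1}, and the constant-eigenvalue trick $\phi-\alpha\,\id{E}$ combined with part \eqref{a1} for part \eqref{a2}. Your extra care in passing from invariant subsheaves to their (still invariant) saturations is a welcome refinement of a point the paper treats implicitly, but it does not change the argument.
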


\begin{proof}\mbox{}
\begin{enumerate}
\item Note that $\Ker{\phi} \,\subset\, E$ is a $\nabla_{\cat{L}}$--invariant coherent
analytic subsheaf of $E$. Since $(E,\, \nabla_{\cat{L}})$ is stable, we 
have $\mu(\Ker{\phi}) \,<\, \mu(E)$. Since $\phi \,\neq\, 0$, we have $\Img{\phi} \,\neq\, 0$.

Consider the Kernel-Image short exact sequence 
$$0 \,\longrightarrow\, \Ker{\phi} \,\longrightarrow\, E \,\longrightarrow\, \Img{\phi} \,\longrightarrow\, 0.$$
Then, $\mu(E) \,<\, \mu(\Img{\phi})$.
Next, consider the Image-Coimage short exact sequence 
$$ 0 \,\longrightarrow\, \Img{\phi} \,\longrightarrow\, E' \,\longrightarrow\, E' / \Img{\phi} \,\longrightarrow\, 0.$$
Note that $\Img{\Phi}$ is a $\nabla'_{\cat{L}}$--invariant coherent analytic subsheaf of $E'$. Since $(E',\,
\nabla'_{\cat{L}})$ is stable, we have $\mu(\Img{\phi}) \,<\, \mu (E')$. Thus, we get that 
$\mu(E) \,<\, \mu(E')$, which contradicts the assumption that
$\mu(E) = \mu (E')$. Therefore, $\Ker{\phi} = 0$ and 
$\Img{\phi} \,=\, E'$.

\item Let  $\phi \,:\, (E,\, \nabla_{\cat{L}})\,\longrightarrow\, (E, \,\nabla_{\cat{L}})$ be an endomorphism.
Let $x \,\in \,X$, and let $\alpha \,\in\, \C$ be an eigenvalue of the linear map $\phi(x) \,:\, E_x
\,\longrightarrow\, E_x$. Note that since $X$ is a compact connected Riemann surface, it does not admit any non-constant holomorphic
function. Therefore the eigenvalues of $\phi$ and also their multiplicities are independent of $x \,\in \,X$.
Since $\nabla_{\cat{L}}$ is $\C$-linear, 
$\phi - \alpha \id{E}$ is an endomorphism of $(E, \nabla_{\cat{L}})$, that is, 
$$\nabla_{\cat{L}} \circ (\phi - \alpha \id{E}) \,\,=\,\, (\phi - \alpha \id{E}) \otimes \id{L^*} \circ \nabla_{\cat{L}}.$$
As $(E, \nabla_{\cat{L}})$ is stable, from Part \eqref{a1} it follows that $\phi - \alpha \id{E}$ is either a zero morphism or
it is an isomorphism. Given that $\alpha$ is an eigenvalue of $\phi(x)$, the kernel of 
$\phi - \alpha \id{E}$ is non-trivial. Thus, $\phi \,=\, \alpha \id{E}$, where $\alpha \,\in\, \C$.
\end{enumerate}
This completes the proof.
\end{proof}

Fix integers $r \,>\, 0$ and $d \,\in\, \Z$. Let 
\begin{equation}
\label{eq:mod1}
\cat{M}_{\cat{L}}(r,d)
\end{equation}
denote the moduli space parametrizing the isomorphism classes of semi-stable 
$\cat{L}$--connections $(E,\, \nabla_{\cat{L}})$ of rank $r$ and degree $d$.
Then, $\cat{M}_{\cat{L}}(r,d)$ is a quasi-projective variety over $\C$.

The moduli space $\cat{M}_{\cat{L}}(r, d)$ can be empty and singular depending on the choices of rank $r$, degree $d$ and Lie algebroid 
$\cat{L}$. For example, if we take $\cat{L} \,=\, \cat{T}_X$, then a $\cat{T}_X$-connection on $E$ is a holomorphic connection on $E$. 
Consequently, from the {\it Atiyah-Weil criterion} (see \cite{A}, \cite{W}) it follows that $\deg{E}\, =\, 0$. Thus, if we choose
$d \,\neq\, 0$, then 
$\cat{M}_{\cat{T}_X}(r,d) \,= \,\emptyset$. Furthermore, when $d \,=\, 0$ and $r \,\geq \,2$, as there exists a strictly semistable
connection, the moduli space $\cat{M}_{\cat{T}_X}(r, 0)$ acquires singularities.

We impose certain conditions on $r$, $d$ and $\cat{L}$ to ensure the existence of  
a non-empty moduli space that is both smooth and irreducible.
Given the assumption that $\rk{\cat{L}} \,=\, 1$, Proposition \ref{prop:1}  makes it evident that it is necessary to have
$\deg{L^*} \,>\, 2g-2$ in order to ensure the non-emptiness for the moduli space 
$\cat{M}_{\cat{L}}(r,d)$. Therefore,
in what follows, it is assumed that $\deg{L^*} \,>\, 2g-2$.
Further, assuming that $r$ and $d$ are coprime, we arrive at a moduli space $\cat{M}_{\cat{L}}(r,d)$ that is both
smooth and irreducible.

The following is proved in \cite{AO}.
 
\begin{theorem}[{\cite[Lemma 5.13, Theorem 7.2]{AO}}]
\label{thm:1}
Let $X$ be a compact connected Riemann surface of genus $g \,\geq\, 2$. Let $\cat{L} \,=\, (L,\, [\cdot,\, \cdot],\, \sharp)$ be a
Lie algebroid such that $\rk{\cat{L}} \,=\, 1$ and $\deg{L^*} \,>\, 2g-2$. Suppose that the integers $r$ and $d$ are coprime with
$r \,\geq\, 2$. Then the moduli space $\cat{M}_{\cat{L}}(r,d)$
of stable $\cat{L}$-connections is an  irreducible smooth quasi-projective variety of dimension $1+ r^2 \deg{L^*}$. 
\end{theorem}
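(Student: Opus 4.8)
The plan is to realise $\cat{M}_{\cat{L}}(r,d)$ as a moduli space of $\Lambda$-modules in order to obtain the quasi-projective structure, and then to prove smoothness, the dimension formula, and irreducibility directly. For the variety structure I would invoke Tortella's equivalence (\cite{T1,T2}) between integrable $\cat{L}$-connections and modules over the split almost polynomial sheaf of rings of differential operators $\Lambda_{\cat{L}}$ determined by $\cat{L}$; since $\rk{\cat{L}}\,=\,1$ we have $\bigwedge^{2}L^{*}\,=\,0$, so every $\cat{L}$-connection is automatically integrable and this identification applies. Simpson's construction \cite{S1,S2} then gives the moduli space of semistable $\Lambda_{\cat{L}}$-modules of rank $r$ and degree $d$ as a quasi-projective variety over $\C$. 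As already noted after the stability definition, when $\gcd(r,d)\,=\,1$ a semistable $\cat{L}$-connection cannot possess a $\nabla_{\cat{L}}$-invariant subbundle of equal slope --- such a subbundle $F$ would force $r\mid\rk{F}$ with $0<\rk{F}<r$ --- so semistable equals stable and $\cat{M}_{\cat{L}}(r,d)$ is precisely the stable locus, a quasi-projective variety.

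For smoothness and the dimension I would run the usual deformation-theory argument. Deformations of a stable $\cat{L}$-connection $(E,\,\nabla_{\cat{L}})$ are governed by the two-term complex
$$C^{\bullet}\,\,:\,\,\,\END{E}\,\xrightarrow{\,\,\nabla_{\cat{L}}\,\,}\,\END{E}\otimes L^{*},$$
which has no degree-two term since $\bigwedge^{2}L^{*}\,=\,0$; the Zariski tangent space is $\mathbb{H}^{1}(X,\,C^{\bullet})$ and obstructions lie in $\mathbb{H}^{2}(X,\,C^{\bullet})$. I would compute $\mathbb{H}^{2}$ by Serre duality for hypercohomology on the curve: $\mathbb{H}^{2}(X,\,C^{\bullet})^{*}$ is the degree-zero hypercohomology of the complex $\big[\END{E}\otimes L\otimes\Omega^{1}_{X}\to\END{E}\otimes\Omega^{1}_{X}\big]$, and it injects into $\coh{0}{X}{\,\END{E}\otimes L\otimes\Omega^{1}_{X}}$. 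Since $E$ is stable, $\END{E}$ is semistable of degree $0$, and $\deg(L\otimes\Omega^{1}_{X})\,=\,2g-2-\deg{L^{*}}\,<\,0$ by hypothesis, so $\END{E}\otimes L\otimes\Omega^{1}_{X}$ is semistable of negative degree and has no nonzero section; hence $\mathbb{H}^{2}(X,\,C^{\bullet})\,=\,0$ and the moduli space is smooth. For the dimension, $\mathbb{H}^{0}(X,\,C^{\bullet})$ is the space of endomorphisms of the pair $(E,\,\nabla_{\cat{L}})$, which equals $\C$ by Lemma \ref{lem:1}(2); therefore, using $\mathbb{H}^{2}\,=\,0$ and Riemann--Roch,
$$\dim_{\C}\mathbb{H}^{1}(X,\,C^{\bullet})\,=\,1-\chi(X,\,C^{\bullet})\,=\,1-\chi(\END{E})+\chi(\END{E}\otimes L^{*})\,=\,1+r^{2}\deg{L^{*}},$$
since $\chi(\END{E})\,=\,r^{2}(1-g)$ and $\chi(\END{E}\otimes L^{*})\,=\,r^{2}\deg{L^{*}}+r^{2}(1-g)$.

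For irreducibility I would use the forgetful map to the moduli space $\cat{U}(r,d)$ of stable bundles. As $L$ is a line bundle (hence semistable) with $\deg{L^{*}}\,>\,2g-2$, Proposition \ref{prop:1} shows that \emph{every} stable bundle $E$ of rank $r$ carries an $\cat{L}$-connection, and the set of all $\cat{L}$-connections on a fixed $E$ is a torsor over $\coh{0}{X}{\,\END{E}\otimes L^{*}}$, of constant dimension $r^{2}\deg{L^{*}}+r^{2}(1-g)$ because $\coh{1}{X}{\,\END{E}\otimes L^{*}}\,=\,0$ (the same vanishing as above). Hence the open subset $\cat{M}'_{\cat{L}}(r,d)\subset\cat{M}_{\cat{L}}(r,d)$ of connections with stable underlying bundle is a torsor over a vector bundle on the irreducible projective variety $\cat{U}(r,d)$, so it is irreducible; since it is a dense open subset of the smooth variety $\cat{M}_{\cat{L}}(r,d)$, the whole moduli space is irreducible. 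An alternative route, which avoids a separate density statement, is to degenerate $\cat{M}_{\cat{L}}(r,d)$ through the family of $\lambda$-$\cat{L}$-connections over $\A^{1}$ onto the moduli space of $L^{*}$-twisted Higgs bundles, which is known to be irreducible. The step I expect to be the genuine obstacle is precisely this irreducibility: the vanishing of $\mathbb{H}^{2}$ and the dimension count are routine once the deformation complex is written down, and quasi-projectivity is handed to us by Simpson's theory, but controlling the locus of $\cat{L}$-connections whose underlying bundle is unstable --- so as to rule out extra components --- requires real input, supplied either by the affine-bundle description above together with density of $\cat{M}'_{\cat{L}}(r,d)$, or by the degeneration to the (irreducible) twisted Higgs moduli space.
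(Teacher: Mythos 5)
The paper itself offers no proof of Theorem \ref{thm:1}: it is quoted verbatim from \cite{AO} (Lemma 5.13 and Theorem 7.2 there), so there is no internal argument to compare against. Your overall strategy --- quasi-projectivity via the Tortella--Simpson identification of integrable $\cat{L}$-connections with $\Lambda$-modules (automatic integrability from $\bigwedge^2 L^*\,=\,0$), semistable $=$ stable from $\gcd(r,d)=1$, smoothness and the dimension count from the two-term deformation complex $\END{E}\to\END{E}\otimes L^*$, and irreducibility from the torsor structure over $\cat{U}(r,d)$ or a degeneration to twisted Higgs bundles --- is exactly the framework the paper attributes to \cite{AO}, and the Riemann--Roch bookkeeping giving $1+r^2\deg L^*$ is correct.

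There is, however, a genuine gap in the smoothness step. In the vanishing of $\mathbb{H}^2$ you write ``since $E$ is stable, $\END{E}$ is semistable of degree $0$''; but a point of $\cat{M}_{\cat{L}}(r,d)$ is a \emph{stable pair} $(E,\nabla_{\cat{L}})$, and the underlying bundle $E$ need not be semistable. For unstable $E$ the bundle $\END{E}\otimes L\otimes\Omega^1_X$ can have many sections, so your argument only proves smoothness on the open locus $\cat{M}'_{\cat{L}}(r,d)$. The correct argument must use stability of the pair: an element of the degree-zero hypercohomology of the Serre-dual complex is a section $s$ of $\END{E}\otimes L\otimes\Omega^1_X$ annihilated by the operator induced by $\nabla_{\cat{L}}$, so $\Ker{s}$ and $\Img{s}$ are $\nabla_{\cat{L}}$-invariant (equivalently, $s$ is a nonzero morphism of $\cat{L}$-connection pairs into a twist by the negative-degree line bundle $L\otimes\Omega^1_X$), and the slope argument of Lemma \ref{lem:1} forces $s=0$. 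A second, related gap is irreducibility: your primary route assumes that $\cat{M}'_{\cat{L}}(r,d)$ is \emph{dense} in $\cat{M}_{\cat{L}}(r,d)$, which is precisely what has to be proved --- Maruyama's openness gives only openness, and a priori there could be components consisting entirely of connections on unstable bundles. Closing this requires either the codimension estimate for the non-stable-bundle locus (the statement the paper later quotes as \cite[Lemma 7.1]{AO}) combined with the fact that every component has dimension at least $1+r^2\deg L^*$, or a fully executed version of your Hodge-moduli/Higgs degeneration sketch (properness of the $\lambda\to 0$ limit plus irreducibility of the twisted Higgs moduli from \cite{BGL}). You flag this yourself, but as written neither route is carried out, so the proposal establishes the theorem only over the stable-bundle locus.
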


The following remarks emphasize the significance of adopting the overarching perspective of Lie algebroid connections.

\begin{remark} \label{rem:2} 
\mbox{}
\begin{enumerate}
\item Let $D \,=\, \sum_{i = 1}^m a_i x_i$ be an effective divisor on $X$ with $a_i \,\geq\, 1$ and $x_i \,\in\, X$. Let
$\cat{M}_{conn}(D, r, d)$ be 
the moduli space of rank $r$ and degree $d$ semistable meromorphic connections with poles of order at most $a_i$ over each $x_i \in D$. 
If we take the Lie algebroid $\cat{T}_X(-D) \,:=\, (T_X(-D) \,:=\, T_X \otimes \struct{X}(-D),\, [\cdot,\, \cdot],\, \sharp)$, where
$[\cdot , \, \cdot]$ is the usual Lie bracket operation on the vector fields and $\sharp$ is the inclusion morphism
$\sharp \,:\, T_X(-D)\, \hookrightarrow \,T_X$, then, $\cat{M}_{\cat{T}_X(-D)} (r, d) \,\cong\, \cat{M}_{conn}(D, r, d)$.

\item In \cite[Corollary 3.20]{AO}, it has been shown that for any Lie algebroid $\cat{L} \,= (L,\, [\cdot ,\, \cdot],\, \sharp)$ with 
$\deg (L^*) \,> \,2g-2$ and $\sharp \,\neq\, 0$, we have $\cat{M}_{\cat{L}}(r,d) \,\cong\, \cat{M}_{conn}(D, r, d)$ for a unique 
effective divisor $D$ in the linear system $\vert L^{-1} \otimes T_X \vert$.

\item Thus, the formalism of Lie algebroid connections can be effectively applied to uniformly address a wide array of moduli spaces 
encompassing logarithmic connections, meromorphic connections with poles over distinct divisors. This approach will enable us to uncover 
meaningful interconnections among their geometries.
\end{enumerate}
\end{remark}

Next, let's assume that the Lie algebroid $\cat{L}$ is trivial, that is, $\cat{L} \,=\, (L,\, 0,\, 0)$. Then the
$\cat{L}$--connections correspond to the twisted Higgs bundles with a twist by $L^*$ (as described in the third point of
Example \ref{exmp:2}).

Let $\cat{N}_{L^*}(r,d)$ denote the moduli space of semi-stable $L^*$--twisted Higgs bundles with rank $r$ and degree $d$. This 
space $\cat{N}_{L^*}(r,d)$ was constructed in \cite{N}. Under the conditions stated in Theorem \ref{thm:1}, that is, $\deg{L^*} \,>\, 
2g-2$, $r$ and $d$ coprime, and $r \geq 2$, the moduli space $\cat{N}_{L^*}(r,d)$ is an irreducible smooth quasi-projective variety of 
dimension $1 + r^2 \deg{L^*}$, as established in \cite[Theorem 1.2, Proposition 3.3]{BGL}.

The moduli space $\cat{N}_{L^*}(r,d)$ of $L^*$--twisted Higgs bundles is equipped with the Hitchin map
\begin{equation}\label{eq:5}
\cat{H} \,\,:\,\, \cat{N}_{L^*}(r,d) \,\,\longrightarrow\,\, \cat{A} \,\,:=\,\, \bigoplus_{i =1}^r \coh{0}{X}{(L^*)^i}
\end{equation}
defined by $(E, \phi)\, \longmapsto\,\sum_{i = 1}^{r} \tr{\bigwedge^i \phi}$, where $\phi
\,:\, E \,\longrightarrow\, E \otimes L^* $ is an $\struct{X}$--linear morphism. From
\cite[Theorem 6.1]{N}, the map $\cat{H}$ in \eqref{eq:5} is proper.

Let
\begin{equation}
\label{eq:m2}
\cat{P} \, := \, \cat{P}_{L^*}(r,d) \, \subset \, \cat{N}_{L^*}(r,d)
\end{equation}
be the moduli space of $L^*$--twisted 
Higgs bundles $(E, \,\phi)$ such that the underlying vector bundle $E$ is 
stable. Then from \cite[Theorem 2.8(A)]{M}, $\cat{P}$
is a Zariski open dense subset of $\cat{N}_{L^*}(r,d)$.
We can restrict the map $\cat{H}$ in \eqref{eq:5} to $\cat{P}$; let
\begin{equation}
\label{eq:hit}
\cat{H}_{\cat{P}}\,\, :\,\, \cat{P} \,\,\longrightarrow\,\, \cat{A}.
\end{equation}
denote the restriction map.

Using the similar techniques as in \cite{BNR}, one can construct a $1$-dimensional scheme $X_s$ and a finite morphism $X_s
\,\longrightarrow\, X$, for 
every point $s \,\in\, \cat{A}$. This $X_s$ is called the spectral curve associated to the point $s$; it can be singular.
Consider the set
$$U\, =\, \{s \,\in\, \cat{A}\,\,\big\vert\,\, X_s ~\text{ is~integral~and~ smooth}\}\, \subset\, \cat{A}.$$
Then, under the assumption on $\cat{L}$ (i.e., $\deg{L^*} \,>\, 2g-2$), this $U$ is a non-empty Zariski open subset of $\cat{A}$.

According to \cite[Theorem 2.2.1]{FB1}, for any general point $s \,\in\, \cat{A}$, the fiber $\cat{H}^{-1}(s)$ is an abelian variety, denoted 
as $\text{J}(X_s)$; it is the Jacobian variety of the spectral curve $X_s$. Furthermore, as outlined in \cite[Remark 2.2.2]{FB1}, for any 
general points $s \,\in\, \cat{A}$, if $r \,\geq \,3$ and $g \,\geq\, 2$, the fiber $\cat{H}_{\cat{P}}^{-1}(s)$ is of the form $A_s 
\setminus F_s$, where $A_s$ is an abelian variety and $F_s$ is a closed subset of $A_s$ satisfying $\text{codim}(F_s,\, A_s) \,\geq\, 2$.

Let $\cat{U}(r,d)$ be the moduli space of stable vector bundle of rank $r$ and degree $d$, where $r$ and $d$ are coprime. Then, $\cat{U}(r,d)$ is a smooth projective variety of dimension $r^2(g-1)+1$.
Let 
\begin{equation}
\label{eq:6}
\pi \,\,:\,\, \cat{P} \,\,\longrightarrow\,\, \cat{U}(r,d)
\end{equation}
be the morphism defined by $(E,\, \phi) \,\longmapsto \,E$. In view of \cite[Lemma 1.3.1]{FB1}, $\cat{P}$ is a vector bundle over
$\cat{U}(r,d)$ with fiber $\pi^{-1}(E) \,=\, \coh{0}{X}{\,\End{E} \otimes L^*}$.

\section{Compactification and Picard group} 
\label{Comp}

We continue using notation and assumptions from the previous sections. In particular, henceforth we assume that
\begin{equation}\label{an1}
L\,=\, T_X(- \log S)\,=\, TX\otimes {\mathcal O}_X(\sum_{i=1}^m x_i),
\end{equation}
where $\{x_1,\, \cdots, \,x_m\}\, \subset\, X$ is a finite subset of distinct points, and the anchor map
$\sharp \,:\, L = \,T_X(- \log S)\longrightarrow\, T_X$ coincides with the natural inclusion homomorphism.

Let $$\cat{M}'_{\cat{L}}(r,d)\,\, \subset\,\, \cat{M}_{\cat{L}}(r,d)$$ denote the moduli space  of $\cat{L}$--connections
$(E,\, \nabla_{\cat{L}})$ such that the underlying vector bundle $E$ is stable.  It follows from  \cite[Theorem 2.8(A)]{M}
that $\cat{M}'_{\cat{L}}(r,d)$ is a Zariski open dense subset of $\cat{M}_{\cat{L}}(r,d)$.

Let
\begin{equation*}
\label{eq:7}
p \,\,:\,\, \cat{M}'_{\cat{L}}(r,d) \,\,\longrightarrow\,\, \cat{U}(r,d)
\end{equation*}
be the forgetful map as defined in \eqref{eq:0.1}.

We recall the definition of a torsor, and show that 
the moduli space $\cat{M}'_{\cat{L}}(r,d)$ is a $\cat{P}$--torsor over $\cat{U}(r,d)$.

\begin{definition}
\label{def:4}
Let $M$ be a connected complex algebraic variety. Let $\pi\,:\, 
\cat{V} \,\longrightarrow\, M$, be an algebraic  vector bundle.
A $\cat{V}$--\emph{torsor} on $M$ is an algebraic  fiber bundle $p\,:\, Z \,\longrightarrow\, M$,
and an algebraic map from the fiber product
\begin{equation*}
\label{eq:20.1}
\varphi\,:\, Z \times_M \cat{V} \,\longrightarrow\, Z
\end{equation*} 
such that the following conditions are satisfied:
\begin{enumerate}
\item $p \circ \varphi\, =\, p \circ p_Z$, where $p_Z$ is the 
natural projection of $Z \times_M \cat{V} $ to $Z$;
\item the map $Z \times_M \cat{V} \,\longrightarrow\, Z \times_M  Z$ 
defined by $p_Z \times \varphi$ is an isomorphism;
\item $\varphi(\varphi(z,\,v),\,w) \,=\, \varphi(z, \,v+w)$ for all $z\, \in\, Z$ and $v,\, w\, \in\,
\cat{V}$ such that $\pi(v)\,=\, \pi(w)\,=\, p(z)$.
\end{enumerate}
\end{definition}

\begin{proposition}
\label{prop:2}
The isomorphic classes of $\cat{V}$--torsors over $M$ are 
parametrized by $\coh{1}{M}{\cat{V}}$.
\end{proposition}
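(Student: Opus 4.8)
The plan is to construct mutually inverse maps between the set of isomorphism classes of $\cat{V}$--torsors over $M$ and the cohomology group $\coh{1}{M}{\cat{V}}$, where we regard $\cat{V}$ as a coherent (locally free) sheaf on $M$ via its sheaf of algebraic sections. First I would fix a $\cat{V}$--torsor $p\,:\,Z\,\longrightarrow\,M$ together with its action $\varphi$. Since the fiber bundle $p$ is locally trivial, choose a Zariski open cover $\{U_\alpha\}$ of $M$ together with sections $\sigma_\alpha\,:\,U_\alpha\,\longrightarrow\,Z$ (the existence of local sections is where local triviality is used, together with condition (2) of Definition \ref{def:4}, which identifies $Z\times_M\cat{V}$ with $Z\times_M Z$). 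On overlaps $U_\alpha\cap U_\beta$, conditions (2) and (3) guarantee that there is a unique section $c_{\alpha\beta}$ of $\cat{V}$ over $U_\alpha\cap U_\beta$ with $\sigma_\beta\,=\,\varphi(\sigma_\alpha,\,c_{\alpha\beta})$; the associativity condition (3) then yields the cocycle identity $c_{\alpha\gamma}\,=\,c_{\alpha\beta}+c_{\beta\gamma}$ on triple overlaps, so $\{c_{\alpha\beta}\}$ defines a class in $\coh{1}{M}{\cat{V}}$. One checks that a different choice of local sections $\sigma'_\alpha\,=\,\varphi(\sigma_\alpha,\,b_\alpha)$ changes the cocycle by the coboundary $\{b_\beta-b_\alpha\}$, and that a torsor isomorphism $Z\,\longrightarrow\,Z'$ transports sections compatibly, so the class depends only on the isomorphism class of the torsor.

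Conversely, given a class in $\coh{1}{M}{\cat{V}}$, represent it by a \v{C}ech cocycle $\{c_{\alpha\beta}\}$ with respect to some open cover $\{U_\alpha\}$. I would build $Z$ by gluing the trivial pieces $U_\alpha\times\cat{V}|_{U_\alpha}$ (more precisely, the total space of the restriction $\cat{V}|_{U_\alpha}$) along overlaps via the translation isomorphisms $(x,\,v)\,\longmapsto\,(x,\,v+c_{\alpha\beta}(x))$; the cocycle condition is exactly what makes these gluings compatible on triple overlaps, so $Z$ is a well-defined algebraic fiber bundle over $M$. The fiberwise addition of $\cat{V}$ descends to an action $\varphi\,:\,Z\times_M\cat{V}\,\longrightarrow\,Z$ satisfying conditions (1)--(3) of Definition \ref{def:4}, because translation commutes with translation. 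A coboundary change of the cocycle produces a canonically isomorphic bundle, so this assignment is well-defined on cohomology classes.

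Finally I would verify that the two constructions are mutually inverse: starting from a cocycle, gluing to get $Z$, and then reading off the cocycle from the tautological local sections $\sigma_\alpha\,:\,x\,\longmapsto\,(x,\,0)$ returns the original cocycle; and starting from a torsor $Z$, choosing local sections, forming the cocycle, and regluing produces a bundle with a canonical isomorphism to $Z$ (send the regluing chart over $U_\alpha$ to $Z$ via $(x,\,v)\,\longmapsto\,\varphi(\sigma_\alpha(x),\,v)$, which is well-defined globally by the compatibility of the $c_{\alpha\beta}$ with $\varphi$). This gives the claimed bijection. The only genuine subtlety — the main obstacle — is checking that condition (2) of Definition \ref{def:4} forces local sections to exist and makes the "difference" $c_{\alpha\beta}$ of two local sections a \emph{unique, well-defined algebraic} section of $\cat{V}$; once that is in hand, the cocycle identity and the gluing construction are formal, and no serious computation is involved. (In the analytic or smooth category this is classical; the point here is simply that everything can be carried out algebraically, using Zariski-local triviality.)
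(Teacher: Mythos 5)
Your proposal is correct and follows essentially the same Čech-cocycle approach as the paper: choose local sections $\sigma_\alpha$ of the torsor, take their differences as a $1$-cocycle with values in $\cat{V}$, and check independence of choices. In fact you are somewhat more complete than the paper's own proof, which only records the torsor-to-cocycle direction and the triviality criterion, while you also spell out the inverse gluing construction.
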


\begin{proof}
Let $p\,:\, Z\,\longrightarrow\, M$ be a $\cat{V}$--torsor. Let $\{U_i\}_{i \in I}$  be a covering of $M$ by open sets such
that there are sections 
$$\sigma_i\,:\, U_i \,\longrightarrow\, Z{\vert_{ U_i}} $$
for all $i \,\in\, I$. Since $\cat{V}$ acts on $Z$ freely and transitively on the fibers, it follows that
$\sigma_i - \sigma_j$ is a section of $\cat{V}\vert_{U_i \cap U_j}$. Then $\{\sigma_i - \sigma_j\}_{i,j \in I}$
forms a $1$-cocycle with values in $\cat{V}$, and hence defines a class in $\coh{1}{M}{\,\cat{V}}$. This class
is trivial if and only if $Z$ is isomorphic to $\cat{V}$.
\end{proof}

\begin{proposition}\label{prop:3}
Let $\pi \,:\, \cat{P} \,\longrightarrow\, \cat{U}(r,d)$ be the algebraic vector bundle defined in \eqref{eq:6}. Then, the fiber bundle
$p\,:\,\cat{M}'_{\cat{L}}(r,d) \,\longrightarrow\, \cat{U}(r,d)$ defined in \eqref{eq:0.1} is a $\cat{P}$--torsor over 
$\cat{U}(r,d)$.
\end{proposition}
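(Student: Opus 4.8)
The plan is to show that $\cat{M}'_{\cat{L}}(r,d)$ carries a free transitive fiberwise action of the vector bundle $\cat{P}$ over $\cat{U}(r,d)$, and that the projection $p$ is a (Zariski-)locally trivial fiber bundle, so that Definition \ref{def:4} applies. The key point is the standard affine structure on the space of connections: if $(E,\, \nabla_{\cat{L}})$ and $(E,\, \nabla'_{\cat{L}})$ are two $\cat{L}$-connections on the \emph{same} vector bundle $E$, then their difference $\nabla_{\cat{L}} - \nabla'_{\cat{L}}$ is $\struct{X}$-linear (the Leibniz terms involving $d_{\cat{L}}f$ cancel), hence is a section of $\END{E}\otimes L^*$; conversely, for any such section $\phi$ and any $\cat{L}$-connection $\nabla_{\cat{L}}$ on $E$, the sum $\nabla_{\cat{L}} + \phi$ is again an $\cat{L}$-connection on $E$. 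When $\rk{\cat{L}} \,=\, 1$ every $\cat{L}$-connection is automatically integrable, so there is no flatness condition to preserve. Thus, fiberwise over a point $E \,\in\, \cat{U}(r,d)$, the fiber $p^{-1}(E)$ is a torsor (in the set-theoretic sense) under the vector space $\coh{0}{X}{\,\END{E}\otimes L^*} \,=\, \pi^{-1}(E)$, and it is nonempty by Proposition \ref{prop:1} since $E$ is stable, hence semistable, and $\deg{L^*} \,>\, 2g-2$.

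First I would make this action global and algebraic: define
$$\varphi\,:\, \cat{M}'_{\cat{L}}(r,d) \times_{\cat{U}(r,d)} \cat{P} \,\longrightarrow\, \cat{M}'_{\cat{L}}(r,d), \qquad ((E,\,\nabla_{\cat{L}}),\, (E,\,\phi)) \,\longmapsto\, (E,\, \nabla_{\cat{L}} + \phi),$$
and check that it is a morphism of varieties (this follows from the construction of the moduli functors, e.g. via the universal family, or from Simpson's/Tortella's construction of $\cat{M}_{\cat{L}}(r,d)$ as a moduli space of $\Lambda$-modules together with $\cat{P}$ as the bundle of Higgs fields). Conditions (1) and (3) of Definition \ref{def:4} are immediate from the formula, and condition (2) — that $p_Z \times \varphi$ identifies the fiber product with $\cat{M}'_{\cat{L}}(r,d) \times_{\cat{U}(r,d)} \cat{M}'_{\cat{L}}(r,d)$ — amounts exactly to the statement that the difference of two $\cat{L}$-connections on a fixed bundle is a well-defined section of $\END{E}\otimes L^*$, i.e. the injectivity and surjectivity of the affine-space structure noted above; I would record this as the inverse morphism $((E,\,\nabla_{\cat{L}}),\, (E,\,\nabla'_{\cat{L}})) \,\longmapsto\, ((E,\,\nabla_{\cat{L}}),\, \nabla'_{\cat{L}} - \nabla_{\cat{L}})$.

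Next I would address local triviality of $p$, which is what is needed for $p$ to be an "algebraic fiber bundle" in the sense of Definition \ref{def:4}. Over a Zariski open set $U \,\subset\, \cat{U}(r,d)$ where a (twisted or honest) universal bundle exists, one can construct a local section of $p$ by choosing, in a relative/family version, an $\cat{L}$-connection on the universal bundle restricted to $X \times U$: such a relative $\cat{L}$-connection exists because the obstruction lives in $\coh{0}{U}{R^1 q_*(\END{\cat{E}}\otimes L^*)}$ type term which, on the semistable locus with $\mu(\End{E}\otimes L\otimes\Omega^1_X)<0$, vanishes (the same cohomology vanishing as in the proof of Proposition \ref{prop:1}, now in families), at least after shrinking $U$. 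Given such a local section $\sigma$, the map $(E,\,\phi)\,\longmapsto\, (E,\, \sigma(E)+\phi)$ trivializes $p$ over $U$ as a $\cat{P}|_U$-torsor, hence $p$ is Zariski locally trivial with fiber an affine space, so it is an algebraic fiber bundle.

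\textbf{The main obstacle} I anticipate is the local-triviality/existence-of-local-sections step: making precise that a relative $\cat{L}$-connection exists Zariski-locally on $\cat{U}(r,d)$, dealing with the possible nonexistence of a global universal bundle (using the projective bundle of the twisted universal family, or a Quot-scheme presentation), and verifying that the resulting section and the action $\varphi$ are algebraic. All of this is standard once framed correctly — it parallels the treatment of the Hodge moduli space and of Atiyah-sequence torsors in the references \cite{AO}, \cite{FB1} — but it is the only part that is not a one-line formal consequence of the difference-of-connections identity. The remaining verifications (the cocycle conditions, nonemptiness of fibers via Proposition \ref{prop:1}) are routine.
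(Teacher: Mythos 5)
Your proof takes essentially the same route as the paper's: the difference-of-connections identity gives each fiber $p^{-1}(E)$ the structure of an affine space over $\coh{0}{X}{\,\END{E}\otimes L^*}\,=\,\pi^{-1}(E)$, and the resulting fiberwise free and transitive addition action induces the torsor map $\varphi$ on the fiber product, which is exactly the paper's argument. The only difference is that you additionally spell out non-emptiness of the fibers via Proposition \ref{prop:1} and the Zariski-local triviality and algebraicity of $\varphi$, points the paper's proof leaves implicit.
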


\begin{proof}
Let $(E,\, \nabla_{\cat{L}})$ and $(E,\, \nabla'_{\cat{L}})$ be two $\cat{L}$--connections on $E$.
Then $$\nabla_{\cat{L}} - \nabla'_{\cat{L}} \,\in\, \coh{0}{X}{\,\End{E}\otimes L^*}.$$
Conversely, given any $\omega \,\in\, \coh{0}{X}{\End{E}\otimes L^*}$, it is evident that $\nabla_{\cat{L}} + \omega$ is again an 
$\cat{L}$--connection on $E$. Thus, $p^{-1}(E) \,\subset\, 
\cat{M}'_{\cat{L}}(r,d)$ is an affine space modelled over the vector space $\coh{0}{X}{\,\ENd{E} \otimes L^*}$. 

Note that the fiber of the bundle $ \pi\,:\, \cat{P} \,\longrightarrow\, \cat{U}(r,d)$ over $E$ is 
$\coh{0}{X}{\,\ENd{E} \otimes L^*}$. Therefore, we get a natural
action of $\pi^{-1}(E)$ on $p^{-1}(E)$, 
that is, a map
\begin{equation*}
\label{eq:20.2}
\pi^{-1}(E) \times p^{-1}(E) \,\longrightarrow\, p^{-1}(E)
\end{equation*}
defined by $(\omega, \,\nabla_{\cat{L}})\, \longmapsto\, \omega + \nabla_{\cat{L}}$.
This action on the fiber is free and transitive.
It induces a morphism on the fiber products
\begin{equation*}
\label{eq:20.3}
\varphi\,:\, \cat{P} \times_{\cat{U}(r,d)} \cat{M}'_{\cat{L}}{(r,d)}\,\longrightarrow\, \cat{M}'_{\cat{L}}{(r,d)},
\end{equation*}
which satisfies the above conditions in the definition of 
the torsor.
\end{proof}

Let $\cat{C}onn_{\cat{L}}(E)$ denote the space of all $\cat{L}$--connections $\nabla_{\cat{L}}$ on $E$ such that $(E,\, \nabla_{\cat{L}})$ 
is stable. Note that $\cat{C}onn_{\cat{L}}(E)$ is an affine space modelled over the vector space $\coh{0}{X}{\End{E}\otimes L^*}$.
 
Given an automorphism $\Phi$ of $E$ and an $\cat{L}$--connection $\nabla_{\cat{L}}$ on $E$, the $\C$--linear morphism 
$\Phi \otimes \id{L^*} \circ \nabla_{\cat{L}} \circ 
\Phi^{-1}$ defines an $\cat{L}$-connection on $E$.
In fact, $$(\nabla_{\cat{L}},\,\Phi)\,\, \longmapsto\,\, \Phi \otimes 
\id{L^*} \circ \nabla_{\cat{L}} \circ \Phi^{-1}$$ 
defines a natural action of $\text{Aut}(E)$ on $\cat{C}onn_{\cat{L}}(E)$, called gauge transformation.
We would like to compute the dimension of the quotient 
space $\cat{C}onn_{\cat{L}}(E) / \text{Aut}(E)$, that 
parametrizes all isomorphic $\cat{L}$-connections on $E$. 

The Lie algebra of the holomorphic automorphism group 
$\text{Aut}(E)$ is $\coh{0}{X}{\End{E}}$. Therefore, 
$$\text{dim}~\text{Aut}(E) = \text{dim}~\coh{0}{X}{\End{E}}.$$

Choose any $\nabla_{\cat{L}}\, \in\, \cat{C}onn_{\cat{L}}(E)$. Then, from Lemma \ref{lem:1} \eqref{a2}, the isotropy subgroup 
$$\text{Aut}(E)_{\nabla_{\cat{L}}} \,= \,\{\Phi \,\in\, \text{Aut}(E)\,\,\big\vert\,\,\Phi \otimes 
\id{L^*} \circ \nabla_{\cat{L}} \circ \Phi^{-1} \,=\, \nabla_{\cat{L}} \}$$ of $\text{Aut}(E)$ is the group of scalar 
automorphisms of $E$.
Then, the dimension of the space $\cat{C}onn_{\cat{L}}(E) / \text{Aut}(E)$ is 
\begin{equation}
\label{eq:8}
\dim\, \coh{0}{X}{\, \End{E}\otimes L^*} - 
\dim\, \coh{0}{X}{\, \End{E}} + 1.
\end{equation}

\begin{lemma}
\label{lem:2}
Let $E$ be a stable vector bundle on $X$ of rank $r$ and degree $d$. Then  $$\cat{C}onn_{\cat{L}}(E) / \text{Aut}(E)$$
is an affine space modelled on the vector space $\coh{0}{X}{\, \End{E}\otimes L^*}$, and dimension of this space is
$r^2(\deg L^* - g+1)$.
\end{lemma}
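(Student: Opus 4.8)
The plan is to establish two things: that $\cat{C}onn_{\cat{L}}(E)/\mathrm{Aut}(E)$ is an affine space modelled on $\coh{0}{X}{\End{E}\otimes L^*}$, and that its dimension equals $r^2(\deg L^* - g+1)$.

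\medskip

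First I would address the affine structure. Since $E$ is stable, Lemma \ref{lem:1}\eqref{a2} tells us that the automorphism group $\mathrm{Aut}(E)$ consists only of nonzero scalars $\C^* \cdot \id{E}$. The key observation is that these scalar automorphisms act \emph{trivially} on $\cat{C}onn_{\cat{L}}(E)$: for $\Phi = \lambda\,\id{E}$ with $\lambda \in \C^*$, the gauge-transformed connection $\Phi\otimes\id{L^*}\circ\nabla_{\cat{L}}\circ\Phi^{-1} = \lambda\lambda^{-1}\nabla_{\cat{L}} = \nabla_{\cat{L}}$. Hence the quotient map $\cat{C}onn_{\cat{L}}(E) \longrightarrow \cat{C}onn_{\cat{L}}(E)/\mathrm{Aut}(E)$ is a bijection (indeed an isomorphism of varieties), so the quotient inherits the affine-space structure of $\cat{C}onn_{\cat{L}}(E)$, which by the discussion preceding the lemma is modelled on $\coh{0}{X}{\End{E}\otimes L^*}$.

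\medskip

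For the dimension, I would compute $\dim\coh{0}{X}{\End{E}\otimes L^*}$ via Riemann--Roch. Because the gauge action of $\mathrm{Aut}(E)$ is trivial, the dimension formula \eqref{eq:8} must be used with care — but in fact the cleaner route is just $\dim\cat{C}onn_{\cat{L}}(E)/\mathrm{Aut}(E) = \dim\cat{C}onn_{\cat{L}}(E) = \dim\coh{0}{X}{\End{E}\otimes L^*}$ whenever $\cat{C}onn_{\cat{L}}(E)$ is nonempty (nonemptiness follows from Proposition \ref{prop:1}, since $E$ stable is semistable, $L$ is a line bundle hence semistable, and $\mu(L^*) = \deg L^* > 2g-2$). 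Now $\deg(\End{E}\otimes L^*) = r^2\deg L^*$ and $\mathrm{rk}(\End{E}\otimes L^*) = r^2$, so Riemann--Roch gives $\dim\coh{0}{X}{\End{E}\otimes L^*} - \dim\coh{1}{X}{\End{E}\otimes L^*} = r^2\deg L^* + r^2(1-g)= r^2(\deg L^* - g + 1)$. It remains to check $\coh{1}{X}{\End{E}\otimes L^*} = 0$: by Serre duality this is dual to $\coh{0}{X}{\End{E}\otimes L\otimes\Omega^1_X}$, and $\mu(\End{E}\otimes L\otimes\Omega^1_X) = \deg(L\otimes\Omega^1_X) = (2g-2) - \deg L^* < 0$. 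Since $\End{E}$ is semistable of degree $0$ (as $E$ is stable), $\End{E}\otimes L\otimes\Omega^1_X$ is semistable of negative slope, hence has no nonzero global sections — exactly the argument used in the proof of Proposition \ref{prop:1}.

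\medskip

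The main subtlety to watch is the interplay between formula \eqref{eq:8} and the triviality of the scalar action: naively \eqref{eq:8} reads $\dim\coh{0}{\End{E}\otimes L^*} - \dim\coh{0}{\End{E}} + 1$, and since $E$ is stable $\dim\coh{0}{X}{\End{E}} = 1$, so the ``$-\dim\coh{0}{\End{E}}+1$'' cancels to $0$ and we recover $\dim\coh{0}{X}{\End{E}\otimes L^*} = r^2(\deg L^* - g+1)$ consistently. I would state this reconciliation explicitly so the reader sees that the stability hypothesis forces the isotropy group to be all of $\mathrm{Aut}(E)$, collapsing the quotient dimension to that of $\cat{C}onn_{\cat{L}}(E)$ itself. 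No step here is a genuine obstacle; the only thing requiring attention is invoking the semistability of $\End{E}$ (valid since $E$ is stable) to kill $H^1$.
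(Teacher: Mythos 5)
Your argument is correct and takes essentially the same route as the paper: the scalar automorphisms act trivially, so the quotient is just the affine space $\cat{C}onn_{\cat{L}}(E)$ modelled on $\coh{0}{X}{\End{E}\otimes L^*}$, and the dimension comes from killing $\coh{1}{X}{\End{E}\otimes L^*}$ via Serre duality and the negative slope of the semistable bundle $\End{E}\otimes \Omega^1_X\otimes L$, followed by Riemann--Roch. The only small slip is attributing $\mathrm{Aut}(E)\,=\,\C^*\cdot \id{E}$ to Lemma \ref{lem:1}\,(2), which concerns endomorphisms of stable $\cat{L}$--connections; the paper instead invokes the standard fact $\coh{0}{X}{\End{E}}\,=\,\C\cdot\id{E}$ for a stable bundle $E$.
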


\begin{proof}
Note that when $E$ is stable, $\coh{0}{X}{\End{E}} \,=\, \C \cdot \id{E}$, that is, automorphisms of $E$ are just non-zero
constant scalar multiplications, and in that case 
the quotient space $\cat{C}onn_{\cat{L}}(E) / \text{Aut}(E)$ is equal to the affine space modelled over the vector
space $\coh{0}{X}{\End{E}\otimes L^*}$. 
This is the case with the fibers of the morphism $p$
defined in \eqref{eq:0.1}, because the underlying vector bundle is stable.

In view of \eqref{eq:8}, it is enough to compute the dimension of 
$ \coh{0}{X}{\End{E}\otimes L^*}$.
Given that $E$ is stable, the vector bundle $\End{E} \otimes \Omega^1_X \otimes L$ is semistable. Since 
$$\mu (\End{E} \otimes \Omega^1_X \otimes L) \,=\, 2g-2 + \deg(L) \,<\, 0,$$ we have
$\coh{0}{X}{\,\End{E} \otimes \Omega^1_X \otimes L} \,=\, 0$. Thus, from Serre duality,
$$\coh{1}{X}{\,\End{E} \otimes L^*} \,=\, \coh{0}{X}{\,\End{E} \otimes \Omega^1_X \otimes L}^* \,= \,0.$$
From Riemann-Roch theorem for $X$, 
$$\dim\, \coh{0}{X}{\, \End{E}\otimes L^*} \, = \, \deg(\End{E} \otimes L^*) \,+ \, \rk{\End{E} \otimes L^*}(1-g)$$
$$
\,= \, r^2(\deg{L^*} -g +1).$$ 
This completes the proof.
\end{proof}
 
\begin{theorem}
\label{thm:1.1}
There exists an algebraic vector bundle 
\begin{equation}
\label{eq:phi}
\Phi \,\,:\,\, \cat{F} \, \,\longrightarrow\, \, \cat{U}(r,d)
\end{equation}
of rank $r^2(\deg{L^*} -g +1) +1$ such that 
$\cat{M}'_{\cat{L}}(r,d)$ is embedded in $\p (\cat{F})$ with
\begin{equation}
\label{eq:h}
{\bf H} := \p (\cat{F}) \setminus \cat{M}'_{\cat{L}}(r,d)
\end{equation}
being an hyperplane at infinity.
\end{theorem}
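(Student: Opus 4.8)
The plan is to construct $\cat{F}$ as a rank $r^2(\deg L^* - g+1)+1$ vector bundle on $\cat{U}(r,d)$ whose projectivization $\p(\cat{F})$ contains $\cat{M}'_{\cat{L}}(r,d)$ as the complement of a projective subbundle serving as the hyperplane at infinity. The starting observation is Proposition~\ref{prop:3}: the forgetful map $p\,:\,\cat{M}'_{\cat{L}}(r,d)\,\longrightarrow\,\cat{U}(r,d)$ is a $\cat{P}$--torsor, where $\cat{P}$ is the vector bundle of \eqref{eq:6} with fibre $\coh{0}{X}{\END{E}\otimes L^*}$. The standard device for compactifying an affine-space bundle (torsor under a vector bundle) fibrewise is to realize it as the complement of the hyperplane at infinity inside a projective bundle. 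Concretely, by Proposition~\ref{prop:2} the torsor $\cat{M}'_{\cat{L}}(r,d)$ corresponds to an extension class in $\coh{1}{\cat{U}(r,d)}{\cat{P}}$; this class is classified by an extension of vector bundles on $\cat{U}(r,d)$
\begin{equation*}
0\,\longrightarrow\, \cat{P}\,\longrightarrow\,\cat{F}\,\longrightarrow\,\struct{\cat{U}(r,d)}\,\longrightarrow\,0,
\end{equation*}
i.e.\ a class in $\mathrm{Ext}^1(\struct{\cat{U}(r,d)},\cat{P})\,=\,\coh{1}{\cat{U}(r,d)}{\cat{P}}$, chosen to be the class of the torsor. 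Then $\cat{F}$ is an algebraic vector bundle of rank $\mathrm{rk}(\cat{P})+1$, and by Lemma~\ref{lem:2} the fibre of $\cat{P}$ over a stable $E$ has dimension $r^2(\deg L^*-g+1)$, so $\mathrm{rk}(\cat{F})\,=\,r^2(\deg L^*-g+1)+1$, as required.

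Next I would identify $\cat{M}'_{\cat{L}}(r,d)$ inside $\p(\cat{F})$. The surjection $\cat{F}\,\twoheadrightarrow\,\struct{\cat{U}(r,d)}$ determines a section of $\p(\cat{F}^*)$, equivalently a projective subbundle $\p(\cat{P})\,\hookrightarrow\,\p(\cat{F})$ of codimension one; set ${\bf H}\,:=\,\p(\cat{P})$, the ``hyperplane at infinity'' in the sense that each fibre ${\bf H}_E\,=\,\p(\cat{P}_E)\,\subset\,\p(\cat{F}_E)$ is a projective hyperplane. The open complement $\p(\cat{F})\setminus{\bf H}$ is an affine-space bundle over $\cat{U}(r,d)$, and its fibre over $E$ is canonically the affine hyperplane $\{v\in\cat{F}_E: v\mapsto 1\in\struct{\cat{U}(r,d)}_E\}$, an affine space under $\cat{P}_E\,=\,\coh{0}{X}{\END{E}\otimes L^*}$. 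Thus $\p(\cat{F})\setminus{\bf H}$ is itself a $\cat{P}$--torsor over $\cat{U}(r,d)$, and by construction its classifying class in $\coh{1}{\cat{U}(r,d)}{\cat{P}}$ is the extension class of $0\to\cat{P}\to\cat{F}\to\struct{}\to 0$, which we arranged to equal the class of $\cat{M}'_{\cat{L}}(r,d)$. By Proposition~\ref{prop:2} two $\cat{P}$--torsors with the same class are isomorphic, so there is an isomorphism $\cat{M}'_{\cat{L}}(r,d)\,\cong\,\p(\cat{F})\setminus{\bf H}$ over $\cat{U}(r,d)$; composing with the open immersion $\p(\cat{F})\setminus{\bf H}\,\hookrightarrow\,\p(\cat{F})$ gives the desired embedding with ${\bf H}\,=\,\p(\cat{F})\setminus\cat{M}'_{\cat{L}}(r,d)$.

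The main obstacle is the purely algebraic nature of everything: one must check that the torsor $\cat{M}'_{\cat{L}}(r,d)\,\longrightarrow\,\cat{U}(r,d)$, which a priori is only an algebraic fibre bundle, genuinely carries an algebraic (not merely analytic) extension class, so that $\cat{F}$ is an \emph{algebraic} vector bundle; since $\cat{U}(r,d)$ is projective and all the bundles involved ($\cat{P}$, hence $\mathrm{R}^1$-type obstruction groups) are coherent-algebraic, this follows from GAGA together with the fact that $\coh{0}{X}{\END{E}\otimes L^*}$ varies algebraically in $E$ (it is the fibre of the algebraic vector bundle $\cat{P}$, cf.\ \cite[Lemma 1.3.1]{FB1}). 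A secondary point is to verify that the dimension count is consistent: $\dim\p(\cat{F})\,=\,\dim\cat{U}(r,d)+\mathrm{rk}(\cat{F})-1\,=\,(r^2(g-1)+1)+r^2(\deg L^*-g+1)\,=\,1+r^2\deg L^*$, matching $\dim\cat{M}'_{\cat{L}}(r,d)\,=\,\dim\cat{M}_{\cat{L}}(r,d)$ from Theorem~\ref{thm:1}, which confirms that $\cat{M}'_{\cat{L}}(r,d)$ is dense in $\p(\cat{F})$ and ${\bf H}$ is a divisor. Finally, smoothness of ${\bf H}\,=\,\p(\cat{P})$ is immediate since it is a projective bundle over the smooth variety $\cat{U}(r,d)$.
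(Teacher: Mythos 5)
Your construction is correct and is essentially the paper's own argument: the paper compactifies the $\cat{P}$--torsor $\cat{M}'_{\cat{L}}(r,d)$ fibrewise by extending the structure group through the standard inclusion of the affine group in $\mathrm{GL}(n+1,\C)$, which yields precisely the rank $n+1$ extension $0\to\cat{P}\to\cat{F}\to\struct{\cat{U}(r,d)}\to 0$ whose class is the torsor class, i.e.\ the bundle you construct directly via $\mathrm{Ext}^1(\struct{\cat{U}(r,d)},\cat{P})\cong\coh{1}{\cat{U}(r,d)}{\cat{P}}$. The identification of $\p(\cat{F})\setminus\p(\cat{P})$ with the torsor and the rank count via Lemma \ref{lem:2} coincide with the paper's treatment.
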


\begin{proof}
\label{proof_thm_1.1}
Let $\cat{G}$ be an affine bundle modeled on a vector bundle $\cat{E}$ of rank $n$ over $\cat{U}(r,d)$. Now, using the standard inclusion of the affine group in $\text{GL}(n+1, \C)$, we obtain a 
vector bundle $\cat{F}$ of rank $n +1$ along with an 
embedding of $\cat{G}$ in $\p (\cat{F})$ as an open 
subset with complement being a hyperplane.

From Proposition \ref{prop:3}, since $\cat{M}_{\cat{L}}'(r,d)$ is a
$\cat{P}$--torsor over $\cat{U}(r,d)$, the above construction yields  
an algebraic vector bundle $\cat{F}$
over $\cat{U}(r,d)$. In this construction, $\cat{M}'_{\cat{L}}(r,d)$ is embedded 
in $\p (\cat{F})$, and  the complement $\p(\cat{F}) \setminus \cat{M}'_{\cat{L}}(r,d)$ forms an hyperplane 
at infinity which denoted as ${\bf H}$. 

The rank of the vector bundle $\cat{F}$ is $r^2(\deg L^* - g+1) + 1$, a fact that readily follows from Lemma \ref{lem:2}.
\end{proof}

Thus, we get a smooth compactification $\p (\cat{F})$ of 
$\cat{M}'_{\cat{L}}(r,d)$ by a smooth divisor $\textbf{H}$ at infinity.

We have the natural inclusion morphism 
\begin{equation}
\label{eq:i}
\iota\,: \,\cat{M}'_{\cat{L}}(r,d) \,\hookrightarrow \,\cat{M}_{\cat{L}}(r,d)
\end{equation}
that induces a homomorphism on the Picard groups
\begin{equation}
\label{eq:0.10}
\iota^* \,:\, \Pic{\cat{M}_{\cat{L}}(r,d)} \,\longrightarrow\, \Pic{\cat{M}'_{\cat{L}}(r,d)}
\end{equation}
defined by the restriction of line bundles.
Further, the morphism $p$ defined in \eqref{eq:0.1} induces a homomorphism of Picard groups
\begin{equation}
\label{eq:0.11}
p^* \,:\, \Pic{\cat{U}(r,d)} \,\longrightarrow\, \Pic{\cat{M}'_{\cat{L}}(r,d)}
\end{equation}
using the pullback operation of line bundles.

\begin{theorem}\label{thm:1.2}
Let $g \,\geq\, 3$ and $r \,\geq\, 2$. Then the two homomorphisms 
$$\iota^* \,:\, \Pic{\cat{M}_{\cat{L}}(r,d)} \,\longrightarrow\, \Pic{\cat{M}'_{\cat{L}}(r,d)}$$ and 
$$p^* : \Pic{\cat{U}(r,d)} \longrightarrow \Pic{\cat{M}'_{\cat{L}}(r,d)},$$
constructed in \eqref{eq:0.10} and 
\eqref{eq:0.11} respectively, are isomorphisms.
\end{theorem}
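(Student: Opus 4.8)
The plan is to establish the two isomorphisms by relating everything to the Picard group of $\cat{U}(r,d)$, using the compactification $\p(\cat{F})$ from Theorem \ref{thm:1.1} and the codimension estimates for the complement of $\cat{M}'_{\cat{L}}(r,d)$ inside $\cat{M}_{\cat{L}}(r,d)$. First I would treat $p^*$. By Theorem \ref{thm:1.1}, $\cat{M}'_{\cat{L}}(r,d)$ sits inside the projective bundle $\p(\cat{F})$ over $\cat{U}(r,d)$ as the complement of the hyperplane-at-infinity divisor $\textbf{H}$, which restricts to a hyperplane in each fibre $\p(\cat{F}_E)\,\cong\,\p^{n}$ with $n = r^2(\deg L^* - g + 1)$. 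Since $g\,\geq\,3$ and $r\,\geq\, 2$ we have $n\,\geq\, 1$; in fact $n$ is reasonably large, so the fibres of $p$ are affine spaces $\A^n$ with $n\,\geq\, 1$. For a projective bundle one has $\Pic{\p(\cat{F})}\,\cong\,\Pic{\cat{U}(r,d)}\oplus\Z\cdot\struct{\p(\cat{F})}(1)$ via $\Phi^*$ and the relative hyperplane class; removing the irreducible divisor $\textbf{H}$ kills exactly the $\Z$ summand generated by $\struct{}(1)$ (since $\struct{\p(\cat{F})}(\textbf{H})$ is, up to a pullback twist, $\struct{}(1)$), so that $\Pic{\cat{M}'_{\cat{L}}(r,d)}\,\cong\,\Pic{\cat{U}(r,d)}$ with the isomorphism being precisely $p^*$ (the composite $\p(\cat{F})\supset \cat{M}'_{\cat{L}}(r,d)\xrightarrow{p}\cat{U}(r,d)$ agrees with the restriction of $\Phi$). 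Here I would invoke the standard fact that for a smooth variety $Y$ and an irreducible divisor $Z\subset Y$, the sequence $\Z\cdot[Z]\to\Pic Y\to\Pic(Y\setminus Z)\to 0$ is exact.

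Next I would handle $\iota^*$. The complement $\cat{M}_{\cat{L}}(r,d)\setminus\cat{M}'_{\cat{L}}(r,d)$ consists of stable $\cat{L}$-connections $(E,\nabla_{\cat{L}})$ whose underlying bundle $E$ is \emph{not} stable. The key point is to show this locus has codimension $\geq 2$ in $\cat{M}_{\cat{L}}(r,d)$; granted this, restriction gives an isomorphism $\Pic{\cat{M}_{\cat{L}}(r,d)}\,\xrightarrow{\sim}\,\Pic{\cat{M}'_{\cat{L}}(r,d)}$ because $\cat{M}_{\cat{L}}(r,d)$ is smooth (Theorem \ref{thm:1}) and removing a closed subset of codimension $\geq 2$ from a smooth variety does not change the Picard group. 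To estimate the codimension, I would stratify the bad locus by the Harder--Narasimhan (or by the maximal destabilizing) type of $E$: a non-stable $E$ of rank $r$, degree $d$ (with $\gcd(r,d)=1$, so $E$ cannot be strictly semistable either — it is genuinely unstable) fits into short exact sequences with quotients and sub of smaller rank, and the dimension of such a stratum, together with the dimension $\dim\coh{0}{X}{\END{E}\otimes L^*}\,-\,\dim\coh{0}{X}{\ENd E}+1$ of the space of $\cat{L}$-connections modulo gauge on a fixed $E$, must be compared with $1 + r^2\deg L^*$. This is the standard type of count (as in the analogous results for logarithmic connections, e.g. \cite{BR,AS1}), and because $\deg L^* > 2g-2$ is large the "extra room" in the space of connections relative to the drop in bundle moduli forces codimension at least $2$; I expect one needs $g\,\geq\, 3$ precisely to make the worst stratum (a near-split extension with balanced ranks) still codimension $\geq 2$.

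Finally I would combine the two steps: $\iota^*$ is an isomorphism by the codimension argument, $p^*$ is an isomorphism by the projective-bundle-minus-hyperplane argument, which completes the proof. The main obstacle is the codimension-$\geq 2$ estimate for the unstable locus inside $\cat{M}_{\cat{L}}(r,d)$: one must carry out the Harder--Narasimhan stratification carefully, bound $\dim\coh{0}{X}{\ENd E}$ from above (equivalently bound the unstability), and verify that the gain in the connection directions never compensates the loss in bundle directions below codimension $2$ — this is where the hypotheses $g\,\geq\,3$, $r\,\geq\,2$ and $\deg L^* > 2g-2$ all enter. Once that estimate is in hand, both isomorphism claims follow from routine facts about Picard groups of smooth varieties and projective bundles.
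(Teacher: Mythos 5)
Your proposal follows essentially the same route as the paper: the codimension-$\geq 2$ bound you identify as the main obstacle is exactly \cite[Lemma 7.1]{AO}, which the paper cites to get $\mathrm{codim}(Z,\,\cat{M}_{\cat{L}}(r,d))\,\geq\,(g-1)(r-1)\,\geq\,2$ for $g\geq 3$, $r\geq 2$ (so $\iota^*$ is an isomorphism), and your treatment of $p^*$ via $\Pic{\p(\cat{F})}\,\cong\,\widetilde{\Phi}^*\Pic{\cat{U}(r,d)}\oplus\Z\,\struct{\p(\cat{F})}(1)$ together with $\struct{\p(\cat{F})}({\bf H})\,=\,\widetilde{\Phi}^*\Gamma\otimes\struct{\p(\cat{F})}(1)$ is precisely the paper's surjectivity argument. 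The only minor divergence is injectivity of $p^*$: you deduce it from the excision sequence (the kernel of restriction to $\p(\cat{F})\setminus{\bf H}$ is generated by $\struct{\p(\cat{F})}({\bf H})$, whose $\struct{\p(\cat{F})}(1)$-component is nonzero), whereas the paper argues directly that a trivializing section of $p^*\eta$ is constant on each affine fiber $p^{-1}(E)\cong\C^N$ and hence descends to a trivialization of $\eta$ --- both arguments are correct.
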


\begin{proof}
Let $Z \,:=\, \cat{M}_{\cat{L}}(r,d) \setminus \cat{M}'_{\cat{L}}(r,d)$, that is, $Z$ is the locus of those $\cat{L}$--connections
such that the underlying vector bundle is not stable. 
Then, from \cite[Lemma 7.1]{AO}, we have 
 $$\text{codim}(Z, \,\cat{M}_{\cat{L}}(r, d)) \,\,\geq \,\,(g-1)(r-1).$$
 Thus, for $g \,\geq\, 3$ and $r \,\geq \,2$, we have 
 $\text{codim}(Z, \,\cat{M}_{\cat{L}}(r, d)) \,\geq\, 2,$
 and hence the homomorphism $\iota^*$ in \eqref{eq:0.10} is an isomorphism.

Next, to show that $p^*$  in \eqref{eq:0.11} is an isomorphism, first we show that $p^*$ is injective.
Let $\eta \,\longrightarrow\,
\cat{U}(r, d)$ be an algebraic line bundle such that $p^* \eta$ is a trivial line bundle
over $\cat{M}'_{\cat{L}}(r, d)$. 
Since $p^* \eta$ is a trivial line bundle, we get a  
 nowhere vanishing section of $p^* \eta$ over $\cat{M}'_{\cat{L}}(r, d)$. Now, fix a nowhere vanishing  section  
$ t \,\in\, \coh{0}{\cat{M}'_{\cat{L}}(r, d)}{\, p^* \eta},$
and choose a
point $E \,\in \,\cat{U}(r, d)$.  Then, from the following commutative diagram 
\begin{equation}
\label{eq:a36}
\xymatrix{
p^* \eta \ar[d] \ar[r]^{\widetilde{p}} & \eta \ar[d] \\
\cat{M}'_{\cat{L}}(r, d) \ar[r]^{p} & \cat{U}(r, d)\\
}
\end{equation}
it follows that
\begin{equation*}
t|_{p^{-1}(E)}\,\,: p^{-1}(E) \,\,\longrightarrow\,\, \eta(E)
\end{equation*}
is nowhere vanishing map. Observe that $p^{-1}(E) \,\cong\, \C^N$ and $\eta(E) \,\cong\, \C$, where $N
\,=\, r^2 (\deg L^* - g +1)$ (see Lemma \ref{lem:2}). Further, any nowhere vanishing algebraic function on an affine space
$\C^N$ is a constant function, that is, $t|_{p^{-1}(E)}$ is a constant
function, and therefore, it corresponds to a non-zero vector $\alpha_{E} \,\in\, \eta(E)$.
Since $t$ is constant on each fiber of $p$, the trivialization $t$ of $p^*\eta$ descends to a trivialization of the
line bundle $\eta$ over $\cat{U}(r, d)$, and hence a nowhere vanishing
section of $\eta$ over $\cat{U}(r, d)$ is produced.  Thus,  $\eta$ is a trivial line bundle
over $\cat{U}(r, d)$.
  
It remains to show that $p^*$ is a surjective morphism. Let 
$\vartheta \,\longrightarrow\, \cat{M}'_{\cat{L}}(r,d)$ be an algebraic line 
bundle. Since $\p(\cat{F})$ is a smooth compactification of $\cat{M}_{\cat{L}}'(r,d)$, we can extend $\vartheta$ to a line bundle
$\vartheta'$ over $\p (\cat{F})$.
Now, from the morphism $$\widetilde{\Phi}\,:\, \p(\cat{F})
\,\longrightarrow\, \cat{U}(r,d)$$ induced by the morphism in \eqref{eq:phi}, we have 
\begin{equation}
\label{eq:a31}
 \Pic {\p (\cat{F})} \,\cong\, \widetilde{ \Phi}^*\Pic{\cat{U}(r, d)}\oplus  \Z \struct{\p (\cat{F})}(1).
\end{equation}
Consequently,
\begin{equation}
\label{eq:a32}
\vartheta' \,=\, \widetilde{\Phi}^* \Lambda \otimes \struct{\p (\cat{F})}(m),
\end{equation}
where $\Lambda$ is a line bundle over $\cat{U}(r,d)$ and 
$m \,\in\, \Z$. Since ${\bf H} \,=\, \p (\cat{F}) \setminus \cat{M}'_{\cat{L}}(r, d)$ in \eqref{eq:h} is the 
hyperplane at infinity, using \eqref{eq:a31}, the line bundle 
$\struct{\p (\cat{F})}({\bf H})$ associated to the divisor ${\bf H}$ can be expressed as
\begin{equation}
\label{eq:a33}
\struct{\p (\cat{F})}({\bf H}) \,=\, \widetilde{\Phi}^* \Gamma \otimes \struct{\p (\cat{F})}(1),
\end{equation}
where $\Gamma$ is a line bundle over $\cat{U}(r, d)$.
Now, from \eqref{eq:a32} and \eqref{eq:a33}, we get that
\begin{equation*}
\label{eq:a34}
\vartheta' \,=\, \widetilde{\Phi}^*(\Lambda \otimes 
(\Gamma^{*})^{\otimes m}) \otimes \struct{\p (\cat{F})}(m{\bf H}),
\end{equation*}
where $\Gamma^*$ denotes the dual of $\Gamma$.
Since the restriction of the line bundle $\struct{\p 
(\cat{F})}({\bf H})$ to the compliment $$\p (\cat{F}) \setminus {\bf H} \,=\, 
\cat{M}'_{\cat{L}}(r, d)$$ is trivial, and the restriction of $\widetilde{\Phi}$ to $\cat{M}'_{\cat{L}}(r, d)$
is the map $p$ defined in \eqref{eq:0.1}, we conclude that
\begin{equation*}
\label{eq:a35}
\vartheta \,\,=\,\,  p^*(\Lambda \otimes (\Gamma^{*})^{\otimes m}).
\end{equation*}
This completes the proof of the theorem.
\end{proof}

\section{the moduli space with fixed determinant}
\label{algebraic}

In this section we explore the moduli space of Lie algebroid connections with a fixed determinant. Let $\xi$ be a holomorphic line bundle 
over $X$ of degree $d$ such that $d$ is coprime to $r$. Fix a $\cat{L}$--connection $$\nabla^\xi_{\cat{L}} \,\,:\,\, \xi
\,\,\longrightarrow\,\, \xi \otimes \cat{L}^*$$ 
on $\xi$. Given an $\cat{L}$--connection $\nabla_{\cat{L}}$ on $E$ of rank $r$, we have an $\cat{L}$--connection $\tr{\nabla_{\cat{L}}}$ on 
$\bigwedge^rE$.

Consider the moduli space
\begin{equation}
\label{eq:a38}
\cat{M}_{\cat{L}}(r, \xi) \,\,\subset\,\, \cat{M}_{\cat{L}}(r,d)
\end{equation}
 parametrizing the isomorphic class of pairs $(E,\, \nabla_{\cat{L}})$ such that 
$$(\bigwedge^rE,\, \tr{\nabla_{\cat{L}}}) \,\,\cong\, (\xi,\, \nabla^\xi_{\cat{L}}).$$
Then, $\cat{M}_{\cat{L}}(r, \xi)$ is a smooth quasi-projective variety of dimension $(r^2-1) \deg{L^*}$ (see \cite[Proposition 9.7]{AO}).

Let 
\begin{equation}
\label{eq:a39}
\cat{M}_{\cat{L}}'(r, \xi) \,\,\subset \,\, \cat{M}_{\cat{L}}(r, \xi)
\end{equation}
be the subset consisting of those $\cat{L}$--connections whose underlying vector bundle is stable.
Then, $\cat{M}_{\cat{L}}'(r, \xi)$ is  a Zariski dense open subvariety of $\cat{M}_{\cat{L}}(r, \xi)$; this follows
immediately from the openness of the stability condition (see \cite{M}).

Let $\cat{U}(r, \xi)$ be the moduli space of stable vector bundles with fixed determinant $\xi$. Then, 
$\cat{U}(r, \xi)$ is a smooth projective variety of dimension $(r^2-1)(g-1)$. We have a natural projection
\begin{equation}
\label{eq:a40}
q \,:\, \cat{M}_{\cat{L}}'(r, \xi) \,\longrightarrow\, \cat{U}(r, \xi)
\end{equation} 
defined by $(E,\, \nabla_{\cat{L}}) \,\longmapsto\, E$.

Next, consider the moduli space $\cat{N}_{L^*}(r,d)$ of 
rank $r$, degree $d$, semi-stable $L^*$--twisted Higgs bundles as described above.
Let 
\begin{equation}
\label{eq:a41}
\cat{N}_{L^*}(r, \xi) \,\,\subset\,\, \cat{N}_{L^*}(r,d)
\end{equation}
be the moduli space parametrizing the isomorphic class of pairs $(E, \,\phi)$ such that $\bigwedge^r E \,\cong\, \xi$
and $\tr{\phi} \,=\, 0$.
Further, let
\begin{equation}
\label{eq:a42}
\cat{P}_{\xi} \,\,:= \,\,\cat{P}_{L^*}(r, \xi) \subset \cat{N}_{L^*}(r, \xi)
\end{equation}
be the moduli space of those $L^*$--twisted Higgs bundles in $ \cat{N}_{L^*}(r, \xi)$ such that underlying vector bundle
is stable. 

Let 
\begin{equation}
\label{eq:a43}
\pi' \,:\, \cat{P}_{\xi} \,\longrightarrow\, \cat{U}(r, \xi)
\end{equation}
be the morphism of varieties defined by $(E, \,\phi) \,\longmapsto\, E$. Then, $\cat{P}_{\xi}$ is a vector bundle
over $\cat{U}(r, \xi)$ whose fiber over $E$ is $\pi'^{-1}(E) \,=\, \coh{0}{X}{\,\ad{E} \otimes L^*}$, where $\ad{E}
\,\subset\, \End{E}$ is a subspace consisting of all those endomorphisms of $E$ 
whose trace is zero.

Restrict the Hitchin map $\cat{H}$ defined in 
\eqref{eq:5} to $\cat{P}_{\xi}$ and denote it by $\cat{H}_\xi$. Thus, we have  
\begin{equation}
\label{eq:a44}
\cat{H}_{\xi} \,:\, \cat{P}_\xi \,\longrightarrow\, \cat{B} \,:=\, \bigoplus_{i =2}^r \coh{0}{X}{(L^*)^i}
\end{equation}
defined by $$(E,\, \phi) \,\longmapsto\, \sum_{i =2}^r \tr{\bigwedge^i \phi}$$ (for more details see \cite{FB1}). 
For any general $b \,\in\, \cat{B}$, let $X_b$ denote the spectral curve 
defined by $b$, which is a ramified $r$-sheeted covering $\epsilon \,:\, X_b \,\longrightarrow\, X $ of $X$.
Recall that, for a general $b \in \cat{B}$, the kernel of the norm  map 
$$\text{Nm}\, :\, \text{J}(X_b) \,\longrightarrow\, \text{J}(X)$$
between the Jacobians is called the Prym variety and is denoted by $\text{Prym}(X_b/X)$.

For a general $b \,\in\, \cat{B}$, the fiber $\cat{H}_{\xi}^{-1}(b)$ is isomorphic to the open subset $A_b$
of $\text{Prym}(X_b/X)$ consisting of isomorphic class of line bundles $M$ over $X_b$ such that the push-forward $\epsilon_*M$ is
a stable vector bundle of degree $d$.
Let $F_b \,:=\, \text{Prym}(X_b/X) \setminus A_b$ denote the complement. Then, from \cite[Proposition 5.7]{BNR},
$$\text{codim}(F_b,\, \text{Prym}(X_b/X)) \,\geq\, 2.$$

Considering again the moduli space $\cat{M}_{\cat{L}}'(r, \xi)$ and using the similar statements as in Proposition \ref{prop:3},  we can show the following.

\begin{proposition}
\label{prop:4}
Let $\pi_\xi \,:\, \cat{P}_\xi \,\longrightarrow\, \cat{U}(r,\xi)$ be the algebraic vector bundle defined in \eqref{eq:a43}. Then, the
fiber bundle  $$q\,\,:\,\,\cat{M}'_{\cat{L}}(r,\xi) \,\,\longrightarrow\,\, \cat{U}(r,\xi)$$ defined in   \eqref{eq:a40} is a
$\cat{P}_{\xi}$--torsor over 
$\cat{U}(r,\xi)$.
\end{proposition}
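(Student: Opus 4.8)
The plan is to mimic the proof of Proposition \ref{prop:3} essentially verbatim, since the fixed-determinant case differs only in that we work with the trace-free part of the endomorphism bundle. First I would fix a stable vector bundle $E$ with $\bigwedge^r E \,\cong\, \xi$, and observe that if $\nabla_{\cat{L}}$ and $\nabla'_{\cat{L}}$ are two $\cat{L}$--connections on $E$ which both induce the fixed connection $\nabla^\xi_{\cat{L}}$ on $\bigwedge^r E$ (that is, $\tr{\nabla_{\cat{L}}} \,=\, \tr{\nabla'_{\cat{L}}} \,=\, \nabla^\xi_{\cat{L}}$), then their difference $\nabla_{\cat{L}} - \nabla'_{\cat{L}}$ lies in $\coh{0}{X}{\ad{E}\otimes L^*}$, because taking traces kills it. Conversely, adding any $\omega \,\in\, \coh{0}{X}{\ad{E}\otimes L^*}$ to such a $\nabla_{\cat{L}}$ produces another $\cat{L}$--connection with the same induced connection on $\bigwedge^r E$, since $\tr{\omega} \,=\, 0$. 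Hence $q^{-1}(E) \,\subset\, \cat{M}'_{\cat{L}}(r,\xi)$ is a (possibly empty) affine space modelled on $\coh{0}{X}{\ad{E}\otimes L^*}$, which by \eqref{eq:a43} is precisely the fiber $\pi_\xi^{-1}(E)$ of $\cat{P}_\xi$ over $E$.

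Next I would set up the action: the map
\begin{equation*}
\pi_\xi^{-1}(E) \times q^{-1}(E) \,\longrightarrow\, q^{-1}(E), \qquad (\omega,\, \nabla_{\cat{L}}) \,\longmapsto\, \omega + \nabla_{\cat{L}},
\end{equation*}
is free and transitive on each fiber by the previous paragraph. These fiberwise actions globalize to an algebraic morphism
\begin{equation*}
\varphi \,:\, \cat{P}_\xi \times_{\cat{U}(r,\xi)} \cat{M}'_{\cat{L}}(r,\xi) \,\longrightarrow\, \cat{M}'_{\cat{L}}(r,\xi)
\end{equation*}
over $\cat{U}(r,\xi)$; one checks the three conditions of Definition \ref{def:4} exactly as in Proposition \ref{prop:3} --- condition (1) is compatibility with $q$, condition (2) is the statement that $(p_Z,\, \varphi)$ gives an isomorphism onto the fiber product with itself (which is the free-transitive statement), and condition (3) is the obvious additivity $\varphi(\varphi(z,v),w) \,=\, \varphi(z,v+w)$. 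That the fiber bundle $q$ is locally trivial (Zariski-locally a product) follows because $\cat{M}'_{\cat{L}}(r,\xi)$ is a locally trivial affine bundle over $\cat{U}(r,\xi)$, again by the same reasoning used for $p$.

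The one genuinely non-routine point --- and the step I would flag as the main obstacle --- is verifying that the fibers $q^{-1}(E)$ are \emph{non-empty}, i.e.\ that for every stable $E$ with $\bigwedge^r E \,\cong\, \xi$ there exists at least one $\cat{L}$--connection on $E$ inducing $\nabla^\xi_{\cat{L}}$ on the determinant. Since $\rk{\cat{L}} \,=\, 1$ and $\deg{L^*} \,>\, 2g-2$, Proposition \ref{prop:1} guarantees some $\cat{L}$--connection $D$ on $E$; then $\tr{D}$ and $\nabla^\xi_{\cat{L}}$ are two $\cat{L}$--connections on the line bundle $\xi$, so they differ by an element $\beta \,\in\, \coh{0}{X}{L^*}$, and I would correct $D$ by subtracting $\tfrac{1}{r}\beta\cdot\id{E}$ to land in $q^{-1}(E)$ --- note $\tr{(\tfrac{1}{r}\beta\,\id{E})} \,=\, \beta$, so the corrected connection has the right determinant. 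This shows $q$ is surjective with the expected affine fibers, completing the identification of $q\,:\,\cat{M}'_{\cat{L}}(r,\xi)\,\longrightarrow\,\cat{U}(r,\xi)$ as a $\cat{P}_\xi$--torsor.
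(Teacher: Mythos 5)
Your proposal is correct and follows essentially the same route the paper intends: the paper gives no separate argument for Proposition \ref{prop:4}, simply asserting that it follows by repeating the proof of Proposition \ref{prop:3} with $\END{E}$ replaced by its trace-free part, which is exactly what you do. Your additional verification that the fibers $q^{-1}(E)$ are non-empty (producing some $\cat{L}$--connection via Proposition \ref{prop:1} and correcting by $\tfrac{1}{r}\beta\cdot\id{E}$ so the induced connection on $\bigwedge^r E$ matches $\nabla^\xi_{\cat{L}}$) is a sound and welcome supplement that the paper leaves implicit.
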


Using the same technique as in the proof of Theorem \ref{thm:1.1}, we can compactify the moduli space $\cat{M}'_{\cat{L}}(r, \xi)$. 
More precisely, we have the following:

\begin{proposition}
\label{prop:5}
There exists a natural algebraic vector bundle $$\Psi \,:\, \cat{F}_\xi \, \,\longrightarrow\,\, \cat{U}(r,\xi)$$  of rank 
$(r^2-1)(\deg{L^*} -g +1) +1$ such that 
$\cat{M}'_{\cat{L}}(r,\xi)$ is embedded in $\p (\cat{F}_\xi)$ with $\p (\cat{F}_\xi) \setminus \cat{M}'_{\cat{L}}(r,\xi)$ being the
hyperplane at infinity.
\end{proposition}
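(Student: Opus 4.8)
The plan is to mimic precisely the construction carried out in the proof of Theorem \ref{thm:1.1}, now in the fixed-determinant setting. First I would invoke Proposition \ref{prop:4}, which tells us that the forgetful map $q\,:\,\cat{M}'_{\cat{L}}(r,\xi)\,\longrightarrow\,\cat{U}(r,\xi)$ is a $\cat{P}_\xi$--torsor over $\cat{U}(r,\xi)$; by Proposition \ref{prop:2} such a torsor is an affine bundle modelled on the vector bundle $\cat{P}_\xi$ of \eqref{eq:a43}. Thus $\cat{M}'_{\cat{L}}(r,\xi)$ is an affine bundle over $\cat{U}(r,\xi)$ whose fibre over a stable $E$ is the affine space on $\coh{0}{X}{\,\ad{E}\otimes L^*}$.

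The next step is the rank computation. For $E$ stable, $\ad{E}$ is semistable of degree $0$, so $\ad{E}\otimes\Omega^1_X\otimes L$ is semistable of slope $2g-2+\deg L \,=\, -\deg L^* +2g-2 \,<\,0$ by the assumption $\deg L^*>2g-2$; hence $\coh{0}{X}{\,\ad{E}\otimes\Omega^1_X\otimes L}\,=\,0$, and Serre duality gives $\coh{1}{X}{\,\ad{E}\otimes L^*}\,=\,0$. Riemann--Roch then yields
\begin{equation*}
\dim\,\coh{0}{X}{\,\ad{E}\otimes L^*}\,=\,\deg(\ad{E}\otimes L^*)+\rk(\ad{E}\otimes L^*)(1-g)\,=\,(r^2-1)(\deg L^*-g+1),
\end{equation*}
so the affine bundle $q$ has constant fibre dimension $(r^2-1)(\deg L^*-g+1)$ over the connected base $\cat{U}(r,\xi)$; this is the fixed-determinant analogue of Lemma \ref{lem:2}.

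Now I apply the standard projective-completion trick used in the proof of Theorem \ref{thm:1.1}: an affine bundle modelled on a vector bundle $\cat{E}$ of rank $n$ over $\cat{U}(r,\xi)$ has structure group the affine group, which embeds in $\mathrm{GL}(n+1,\C)$ in the usual way, so it is the complement of a hyperplane sub-bundle inside $\p(\cat{F}_\xi)$ for a rank $n+1$ vector bundle $\cat{F}_\xi$ over $\cat{U}(r,\xi)$. Taking $n\,=\,(r^2-1)(\deg L^*-g+1)$ produces the desired $\cat{F}_\xi$ of rank $(r^2-1)(\deg L^*-g+1)+1$, together with the embedding $\cat{M}'_{\cat{L}}(r,\xi)\,\hookrightarrow\,\p(\cat{F}_\xi)$ whose complement $\p(\cat{F}_\xi)\setminus\cat{M}'_{\cat{L}}(r,\xi)$ is the hyperplane at infinity of the fibrewise projective completion. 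This finishes the proof.

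There is essentially no new obstacle here — the argument is word-for-word the one in Theorem \ref{thm:1.1}, with $\cat{U}(r,d)$ replaced by $\cat{U}(r,\xi)$, $\cat{P}$ replaced by $\cat{P}_\xi$, $\End{E}$ replaced by $\ad{E}$, and $r^2$ replaced by $r^2-1$ in the rank count. The only point that warrants a line of care is the vanishing $\coh{1}{X}{\,\ad{E}\otimes L^*}\,=\,0$, i.e.\ checking that $\ad{E}$ rather than $\End{E}$ still has slope $0$ so that the slope inequality $\mu(\ad{E}\otimes\Omega^1_X\otimes L)<0$ holds under $\deg L^*>2g-2$; this is immediate since $\ad{E}$ is a direct summand (or, in any case, a subbundle with vanishing first Chern class) of the degree-$0$ bundle $\End{E}$. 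Everything else — local triviality of the torsor, constancy of the fibre dimension over the connected base, and the affine-to-projective group embedding — is identical to the unconstrained case.
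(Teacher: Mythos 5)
Your proposal is correct and follows essentially the same route as the paper, which simply declares that the construction in the proof of Theorem \ref{thm:1.1} carries over verbatim: the $\cat{P}_\xi$--torsor structure from Proposition \ref{prop:4}, the fixed-determinant analogue of Lemma \ref{lem:2} giving $\dim\coh{0}{X}{\,\ad{E}\otimes L^*}\,=\,(r^2-1)(\deg L^*-g+1)$, and the affine-group-into-$\mathrm{GL}(n+1,\C)$ completion. Your added care about the semistability and slope of $\ad{E}\otimes\Omega^1_X\otimes L$ is exactly the detail the paper leaves implicit, and it is handled correctly.
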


The morphism $q$ in \eqref{eq:a40} induces a morphism of 
Picard groups 
\begin{equation}
\label{eq:a45}
q^* \,:\, \Pic{\cat{U}(r,\xi)} \,\longrightarrow\, \Pic{\cat{M}'_{\cat{L}}(r,\xi)}
\end{equation}
that sends a line bundle $\eta$ over $\cat{U}(r, \xi)$ to a line bundle $q^*\eta$ over 
$\cat{M}'_{\cat{L}}(r,\xi)$. Again imitating  the similar steps as in the proof of the  Theorem \ref{thm:1.2}, we have 
the following:

\begin{proposition}
\label{prop:6}
The homomorphism $q^* \,:\, \Pic{\cat{U}(r,\xi)} \,\longrightarrow\, \Pic{\cat{M}'_{\cat{L}}(r,\xi)}$ in \eqref{eq:a45} 
is an isomorphism.
\end{proposition}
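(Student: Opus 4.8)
The plan is to imitate the structure of the proof of Theorem \ref{thm:1.2}, now applied to the fixed-determinant situation, using the compactification furnished by Proposition \ref{prop:5}. First I would establish injectivity of $q^*$ exactly as in the corresponding part of Theorem \ref{thm:1.2}: let $\eta \longrightarrow \cat{U}(r,\xi)$ be an algebraic line bundle with $q^*\eta$ trivial, pick a nowhere vanishing section $t$ of $q^*\eta$, and restrict it to a fiber $q^{-1}(E)$ over a point $E \in \cat{U}(r,\xi)$. By Proposition \ref{prop:4} the fiber $q^{-1}(E)$ is an affine space modelled on $\coh{0}{X}{\ad{E}\otimes L^*}$, hence isomorphic to some $\C^{N}$ with $N = (r^2-1)(\deg{L^*}-g+1)$, while $\eta(E)\cong\C$. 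A nowhere vanishing regular function on $\C^N$ is constant, so $t$ is fiberwise constant; therefore $t$ descends to a nowhere vanishing section of $\eta$ over $\cat{U}(r,\xi)$, forcing $\eta$ to be trivial. This is where I would invoke the commutative diagram analogous to \eqref{eq:a36} with $q$ in place of $p$.

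Next I would prove surjectivity. Given an algebraic line bundle $\vartheta \longrightarrow \cat{M}'_{\cat{L}}(r,\xi)$, use the smooth compactification $\p(\cat{F}_\xi)$ from Proposition \ref{prop:5} to extend $\vartheta$ to a line bundle $\vartheta'$ on $\p(\cat{F}_\xi)$. Writing $\widetilde{\Psi}\,:\,\p(\cat{F}_\xi)\longrightarrow\cat{U}(r,\xi)$ for the projection induced by $\Psi$, the projective-bundle formula gives
$$\Pic{\p(\cat{F}_\xi)} \,\cong\, \widetilde{\Psi}^*\Pic{\cat{U}(r,\xi)} \oplus \Z\,\struct{\p(\cat{F}_\xi)}(1),$$
so $\vartheta' = \widetilde{\Psi}^*\Lambda \otimes \struct{\p(\cat{F}_\xi)}(m)$ for some line bundle $\Lambda$ over $\cat{U}(r,\xi)$ and some $m\in\Z$. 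Since the boundary ${\bf H_0} := \p(\cat{F}_\xi)\setminus\cat{M}'_{\cat{L}}(r,\xi)$ is the hyperplane at infinity, the same formula gives $\struct{\p(\cat{F}_\xi)}({\bf H_0}) = \widetilde{\Psi}^*\Gamma \otimes \struct{\p(\cat{F}_\xi)}(1)$ for some $\Gamma$ over $\cat{U}(r,\xi)$. Combining these, $\vartheta' = \widetilde{\Psi}^*(\Lambda\otimes(\Gamma^*)^{\otimes m})\otimes\struct{\p(\cat{F}_\xi)}(m{\bf H_0})$; restricting to $\p(\cat{F}_\xi)\setminus{\bf H_0} = \cat{M}'_{\cat{L}}(r,\xi)$ kills the $\struct{}(m{\bf H_0})$ factor and identifies $\widetilde{\Psi}|_{\cat{M}'_{\cat{L}}(r,\xi)}$ with $q$, yielding $\vartheta = q^*(\Lambda\otimes(\Gamma^*)^{\otimes m})$, so $q^*$ is surjective.

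The final sentence $\Pic{\cat{M}'_{\cat{L}}(r,\xi)}\cong\Z$ claimed in the introduction then follows by combining the isomorphism $q^*$ with the classical fact (Drezet--Narasimhan \cite{DN}) that $\Pic{\cat{U}(r,\xi)}\cong\Z$, generated by the ample determinant line bundle $\Theta$, and with the codimension estimate on $\cat{M}_{\cat{L}}(r,\xi)\setminus\cat{M}'_{\cat{L}}(r,\xi)$ coming from \cite[Lemma 7.1]{AO} (which gives codimension $\geq(g-1)(r-1)\geq 2$ for $g\geq 3$, $r\geq 2$) to transfer the computation to $\cat{M}_{\cat{L}}(r,\xi)$ itself. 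The only genuinely new ingredient compared to Theorem \ref{thm:1.2} is checking that Proposition \ref{prop:5} really does produce a smooth \emph{compactification} — i.e. that $\p(\cat{F}_\xi)$ is projective over $\cat{U}(r,\xi)$, hence projective, and that the embedding of $\cat{M}'_{\cat{L}}(r,\xi)$ as the complement of a single hyperplane sub-bundle is open and dense; I expect this to be the main point requiring care, though it is handled by the same affine-group-inside-$\mathrm{GL}(n+1)$ argument used in the proof of Theorem \ref{thm:1.1}, now applied to the $\cat{P}_\xi$-torsor of Proposition \ref{prop:4} rather than the $\cat{P}$-torsor.
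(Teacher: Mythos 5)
Your proposal is correct and follows exactly the route the paper intends: the paper proves Proposition \ref{prop:6} simply by invoking the same steps as in Theorem \ref{thm:1.2}, using the $\cat{P}_\xi$--torsor structure of Proposition \ref{prop:4} and the compactification $\p(\cat{F}_\xi)$ of Proposition \ref{prop:5}, which is precisely your fiberwise-constancy argument for injectivity and the projective-bundle-formula argument for surjectivity.
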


The anchor map $\sharp \,:\, L \,\longrightarrow\, T_X$ induces a morphism 
\begin{equation}\label{eq:a46}
 \alpha \,:\, T^*\cat{U}(r, \xi) \,\longrightarrow\, \cat{P}_\xi
 \end{equation}
of vector bundles over $\cat{U}(r, \xi)$, where $T^*\cat{U}(r, \xi)$ denotes the cotangent bundle of $\cat{U}(r, \xi)$. 
 
Note that, according to \cite[Section 4.6]{FB1}, the dual vector bundle $\cat{P}_\xi^*$ over $\cat{U}(r, \xi)$ admits a Lie algebroid 
structure $(\cat{P}_{\xi}^*,\, [\cdot, \,\cdot],\, \widehat{\sharp})$ such that the dual map
$$\widehat{\sharp}^* \,:\, T^*\cat{U}(r, \xi) \,\longrightarrow\, 
\cat{P}_\xi$$ of $\widehat{\sharp} \,:\, \cat{P}_\xi^* \,\longrightarrow\, T \cat{U}(r,\xi)$ coincides with $\alpha$ in \eqref{eq:a46}.

Let $\Theta$ be the ample generator of the cyclic group
$\Pic{\cat{U}(r, \xi)} \,\cong\, \Z$, where the isomorphism  follows from \cite[Proposition 3.4, (ii)]{R}. Using the fact
that $\cat{P}_\xi^*$ is a Lie algebroid over $\cat{U}(r, \xi)$, we consider the sheaf of $\cat{P}_\xi^*$--connection on $\Theta$.
Recall that a $\cat{P}_\xi^*$--connection on $\Theta$ is a $\C$--linear map 
$$\nabla_{\cat{P}_\xi^*} \,:\, \Theta \,\longrightarrow\, \Theta \otimes \cat{P}_\xi$$
which satisfies the Leibniz rule 
$$\nabla_{\cat{P}_\xi^*} (g t) \,=\, g \nabla_{\cat{P}_\xi^*} (t) + t \otimes d_{\cat{P}_\xi^*}(g),$$
for all local sections $t$ of $\Theta$ and $g$ of 
$\struct{\cat{U}(r, \xi)}$, where $d_{\cat{P}_\xi^*}$ is the following composition of maps
$$\struct{\cat{U}(r, \xi)}\, \xrightarrow{\,\,\,d\,\,\,}\, T^*\cat{U}(r, \xi) \,\xrightarrow{\,\,\,\alpha\,\,\,}\, \cat{P}_\xi.$$

Let $\cat{C}onn_{\cat{P}_\xi^*}(\Theta)$ be the sheaf of all $\cat{P}_\xi^*$-connection on $\Theta$. Then, 
we have a canonical projection 
\begin{equation}\label{eq:21}
\psi \,:\, \cat{C}onn_{\cat{P}_\xi^*}(\Theta) \,\longrightarrow\, \cat{U}(r, \xi).
\end{equation}
Moreover, $\cat{C}onn_{\cat{P}_\xi^*}(\Theta)$ is a quasi-projective variety and it is also a $\cat{P}_\xi$--torsor.

Consider the following standard first order $\cat{P}_\xi^*$--jet exact sequence (also called $\cat{P}_\xi^*$--Atiyah sequence) for the line
bundle $\Theta$ (see \cite{FB} for more details):
\begin{equation}
\label{eq:22}
0 \,\longrightarrow\, \Theta \otimes_{\struct{\cat{U}(r, \xi)}} \cat{P}_\xi\,\longrightarrow\,
 J^1_{\cat{P}^*_\xi}(\Theta) \,\longrightarrow\, \Theta \,\longrightarrow\, 0.
\end{equation}

Applying $\HOM[\struct{\cat{U}(r, \xi)}]{-}{\, \Theta}$ to \eqref{eq:22}, we get
\begin{equation*}
0 \,\longrightarrow\, \END[\struct{\cat{U}(r, \xi)}]{\Theta} \,\xrightarrow{\,\,\,\iota\,\,\,}\, \cat{A}t_{\cat{P}^*_\xi}(\Theta)
\,:=\, \HOM[\struct{\cat{U}(r, \xi)}]{J^1_{\cat{P}^*_\xi}(\Theta)}{\Theta}
\end{equation*}
$$
\xrightarrow{\,\,\,\sigma\,\,\,\,}\, \END[\struct{\cat{U}(r, \xi)}]{\Theta} \otimes \cat{P}^*_\xi \,\longrightarrow\, 0 
$$
which in turn gives the following short exact sequence of vector bundles
\begin{equation}\label{eq:23}
0 \,\longrightarrow\, \struct{\cat{U}(r, \xi)} \,\xrightarrow{\,\,\,\iota\,\,\,}\, \cat{A}t_{\cat{P}^*_\xi}(\Theta)
\,\xrightarrow{\,\,\,\sigma\,\,\,} \cat{P}^*_\xi \,\longrightarrow\, 0 
\end{equation}
over $\cat{U}(r,\xi)$. The vector bundle $\cat{A}t_{\cat{P}^*_\xi}(\Theta)$ in \eqref{eq:23} is called the 
$\cat{P}^*_\xi$--Atiyah bundle associated with $\Theta$, and the morphism $\sigma $ in \eqref{eq:23} is called the symbol operator.

The line bundle $\Theta$ admits a holomorphic $\cat{P}_\xi^*$--connection if and only if the short exact sequence \eqref{eq:23} splits 
holomorphically.

Next we would like to show that the two $\cat{P}_\xi$--torsors 
$\cat{M}_{\cat{L}}'(r, \xi)$  and $\cat{C}onn_{\cat{P}_\xi^*}(\Theta)$ are isomorphic. A few lemmas, which are easy to prove, will
be needed for it.

\begin{lemma}\label{lem:tor1}
Let $Y$ be a smooth projective variety over $\C$.
Let $p_1 \,:\, V \longrightarrow Y$ be a vector bundle and  let $q_1 \,:\, B \,\longrightarrow\, Y$ be a $V$--torsor.
Suppose $\Phi \,:\, V \,\longrightarrow\, W$ be an ${\mathcal O}_Y$--linear morphism between
vector bundles. Then, $V$ acts on $B \times_Y W$ via  
$$v \cdot (b, \,w) \, = \, (b - v,\, w + \Phi_y(v))$$
for every $y \,\in Y$, $v \in V_y$, $b \in B_y$ and $w \in W_y$, and $W$ acts on $B \times_Y W$ via 
$$w' \cdot (b,\, w) \,=\, (b,\, w+w'), $$
for every $y \,\in\, Y$, $b \,\in\, B_y$, $w, \,w' \,\in \,W_y$. The quotient  $(B \times_Y W) / V$ is a $W$--torsor. 
\end{lemma}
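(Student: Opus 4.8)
The plan is to verify the torsor axioms directly from Definition \ref{def:4}, working fiberwise over $Y$ and then checking that the constructions are algebraic (hence glue over an open cover). First I would check that the two actions described are well-defined: the map $v \cdot (b,w) = (b-v, w+\Phi_y(v))$ makes sense because $b - v \in B_y$ (as $B$ is a $V$--torsor, $V_y$ acts on $B_y$), and $w + \Phi_y(v) \in W_y$ since $\Phi_y \colon V_y \to W_y$ is linear; similarly the $W$--action $w' \cdot (b,w) = (b, w+w')$ is the obvious translation in the second coordinate. One checks these are genuine group actions: for the $V$--action, $v_1 \cdot (v_2 \cdot (b,w)) = v_1 \cdot (b - v_2, w + \Phi_y(v_2)) = (b - v_2 - v_1, w + \Phi_y(v_2) + \Phi_y(v_1)) = (v_1 + v_2)\cdot(b,w)$ using additivity of $\Phi_y$ and the torsor axiom for $B$; the $W$--action is visibly additive.

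Next I would observe that the two actions commute: $w' \cdot (v \cdot (b,w)) = w' \cdot (b - v, w + \Phi_y(v)) = (b - v, w + \Phi_y(v) + w')$ and $v \cdot (w' \cdot (b,w)) = v \cdot (b, w + w') = (b - v, w + w' + \Phi_y(v))$, which agree. Hence the $W$--action descends to the quotient $Q := (B \times_Y W)/V$, and there is an induced projection $q_Q \colon Q \to Y$ (the composite $B \times_Y W \to Y$ is $V$--invariant). Then I would show the induced $W$--action on $Q$ is free and transitive on each fiber $Q_y$. For transitivity: given two points of $Q_y$, lift them to $(b_1, w_1), (b_2, w_2) \in B_y \times W_y$; since $V_y$ acts transitively on $B_y$, there is $v \in V_y$ with $b_1 - v = b_2$, so $v \cdot (b_1, w_1) = (b_2, w_1 + \Phi_y(v))$ lies in the same $V$--orbit as $(b_1,w_1)$, and then the element $w_2 - w_1 - \Phi_y(v) \in W_y$ moves it to $(b_2, w_2)$; thus the classes are related by a $W$--element. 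For freeness: if $w' \cdot [(b,w)] = [(b,w)]$ in $Q_y$, then $(b, w + w') = v \cdot (b, w) = (b - v, w + \Phi_y(v))$ for some $v \in V_y$, forcing $b - v = b$, hence $v = 0$ (freeness of the $V$--action on $B_y$), hence $w' = \Phi_y(v) = 0$.

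Finally I would address the algebro-geometric structure: cover $Y$ by Zariski opens $U_i$ over which both $V$, $W$ are trivial and $B$ has a section $s_i \colon U_i \to B$; the section together with the trivialization identifies $B|_{U_i} \cong V|_{U_i}$, so $(B \times_Y W)|_{U_i} \cong (V \times W)|_{U_i}$, and the quotient by the (free) $V$--action is then identified with $W|_{U_i}$ via $[(v, w)] \mapsto w - \Phi(v)$ — an algebraic map with algebraic inverse. These local trivializations glue to give $Q \to Y$ the structure of an algebraic fiber bundle, and the $W$--action is algebraic because it is so in each chart; the remaining torsor axioms (the product map $Q \times_Y W \to Q \times_Y Q$ being an isomorphism, and associativity/additivity $\varphi(\varphi(z,w), w') = \varphi(z, w+w')$) follow from the fiberwise free-transitive property and additivity of translation in $W$, exactly as in the proof of Proposition \ref{prop:3}. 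I do not expect a serious obstacle here; the one point requiring a little care is checking that the quotient $(B \times_Y W)/V$ exists as a variety (not just as a set) and that the descended data are algebraic, which is why passing to a trivializing cover at the outset is the cleanest route.
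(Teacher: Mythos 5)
Your proof is correct and supplies exactly the routine verification that the paper omits (the paper states this lemma without proof, calling it easy to prove): the fiberwise check that the two actions are well defined and commute, that the induced $W$--action on the quotient is free and transitive on fibers, and the Zariski-local trivialization giving the quotient its structure of an algebraic fiber bundle. The only point to watch is the sign convention in the chart identification $B|_{U_i}\,\cong\, V|_{U_i}$ coming from the section $s_i$: your descent map $[(v,\,w)]\,\longmapsto\, w-\Phi(v)$ is $V$--invariant only for one of the two natural choices of that identification (the other choice requires $w+\Phi(v)$), but this is immaterial to the argument.
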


Using the notation in Lemma \ref{lem:tor1}, we have the following:

\begin{lemma}
\label{lem:tor2}
Let $B_1 \,\longrightarrow\, Y$ and $B_2 \,\longrightarrow\, Y$ be two $V$--torsors over $Y$ such that $B_1 \,\cong\, B_2$
as $V$--torsors. Then, $(B_1 \times_Y W )/V$ and $(B_2 \times_Y W) / V$ are isomorphic as $W$--torsors.
\end{lemma}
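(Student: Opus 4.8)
The plan is to verify the claim by a direct unwinding of the construction in Lemma~\ref{lem:tor1}, using the hypothesis $B_1 \,\cong\, B_2$ as $V$--torsors to produce the desired isomorphism of $W$--torsors. First I would fix an isomorphism of $V$--torsors $f \,:\, B_1 \,\longrightarrow\, B_2$ over $Y$; by definition this is a morphism commuting with the projections to $Y$ and satisfying the equivariance condition $f(b - v) \,=\, f(b) - v$ for all $y \in Y$, $b \in (B_1)_y$ and $v \in V_y$ (equivalently, $f$ commutes with the $V$--action). Then I would define a map
\begin{equation*}
\widetilde{f} \,:\, B_1 \times_Y W \,\longrightarrow\, B_2 \times_Y W, \qquad (b,\, w) \,\longmapsto\, (f(b),\, w),
\end{equation*}
which is visibly a morphism over $Y$.

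Next I would check that $\widetilde{f}$ is compatible with the $V$--action defined in Lemma~\ref{lem:tor1}: since $v \cdot (b,\,w) \,=\, (b-v,\, w + \Phi_y(v))$, applying $\widetilde f$ gives $(f(b-v),\, w + \Phi_y(v)) \,=\, (f(b) - v,\, w + \Phi_y(v)) \,=\, v \cdot (f(b),\, w) \,=\, v \cdot \widetilde f(b,\,w)$, where the middle equality is precisely the equivariance of $f$. Hence $\widetilde f$ descends to a morphism
\begin{equation*}
\overline{f} \,:\, (B_1 \times_Y W)/V \,\longrightarrow\, (B_2 \times_Y W)/V
\end{equation*}
of varieties over $Y$. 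Applying the same construction to $f^{-1}$ yields a two-sided inverse, so $\overline f$ is an isomorphism. Finally I would observe that $\widetilde f$ intertwines the $W$--actions on the nose, since $\widetilde f(w' \cdot (b,\,w)) \,=\, \widetilde f(b,\, w+w') \,=\, (f(b),\, w+w') \,=\, w' \cdot (f(b),\,w)$, and this property passes to the quotients; therefore $\overline f$ is an isomorphism of $W$--torsors, as claimed.

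There is essentially no serious obstacle here: the whole statement is a formal diagram chase, and the only point requiring a modicum of care is making sure that the $V$--action on $B_i \times_Y W$ "twisted" by $\Phi$ (the first coordinate moves by $-v$, the second by $+\Phi_y(v)$) is respected, which reduces, as above, to the bare equivariance of the torsor isomorphism $f$ with respect to the $V$--action on $B_1$ and $B_2$. One should also note for completeness that $(B_i\times_Y W)/V$ is a $W$--torsor by Lemma~\ref{lem:tor1}, so the statement "$\overline f$ is an isomorphism of $W$--torsors" is meaningful; the content is just that $\overline f$ is $W$--equivariant and bijective, both of which we have exhibited.
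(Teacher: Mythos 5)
Your proof is correct. The paper in fact gives no argument for this lemma (it is introduced among "a few lemmas, which are easy to prove"), and your verification --- transporting the $V$--torsor isomorphism $f$ to $\widetilde f(b,\,w)=(f(b),\,w)$ on $B_1\times_Y W$, checking equivariance for the twisted $V$--action $v\cdot(b,\,w)=(b-v,\,w+\Phi_y(v))$, descending to the quotients, and noting compatibility with the $W$--action and with $f^{-1}$ --- is precisely the routine check the authors intended.
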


Recall that if we take the Lie algebroid $\cat{L} \,=\, \cat{T}_X$, we get usual holomorphic connections on $E$. Let 
\begin{equation}
\label{eq:moduli-1}
\cat{M}'(r, \xi) 
\end{equation}
denote the moduli space of pairs $(E, \,\mathcal{D})$, where $E$ is a stable vector bundle of rank $r$, 
$\mathcal{D}$ is a holomorphic projective connection on $E$ and $\Lambda ^r E \,\cong\, \xi$. Then, there is a natural projection 
\begin{equation}
\label{eq:proj-1}
\pi'' \,:\, \cat{M}'(r, \xi)\,\longrightarrow\, \cat{U}(r,\xi)
\end{equation}
defined by $(E, \, \mathcal{D})\, \longmapsto\, E$. We note that any stable vector bundle on $X$ admits a holomorphic projective
connection. In fact, every stable vector bundle $E$ of rank $r$ admits a unique holomorphic projective connection whose
monodromy lies in $\text{PU}(r)$ \cite{NS}. 
Note that the moduli space $\cat{M}'(r, \xi) $ is a $T^* \cat{U}(r, \xi)$--torsor.

Let $E$ be a holomorphic vector bundle on $X$ of rank $r$. A holomorphic projective connection on $E$ and a logarithmic connection
on $\bigwedge^r E$ with polar divisor $D$ together define a logarithmic connection on $E$ with polar divisor $D$.

From Lemma \ref{lem:tor1}, the quotient space 
\begin{equation}\label{fib-prod-1}
(\cat{M}'(r, \xi) \times_{\cat{U}(r, \xi)} \cat{P}_\xi)/T^* \cat{U}(r, \xi)
\end{equation}
is a $\cat{P}_\xi$--torsor, where $\cat{P}_\xi$ is in \eqref{eq:a43}. Then, we have:

\begin{lemma}
\label{lem:tor3}
The $\cat{P}_\xi$--torsor $(\cat{M}'(r, \xi) \times_{\cat{U}(r, \xi)} \cat{P}_\xi)/T^* \cat{U}(r, \xi)$ in
\eqref{fib-prod-1} is isomorphic to the $\cat{P}_\xi$--torsor $\cat{M}'_{\cat{L}}(r, \xi)$.
\end{lemma}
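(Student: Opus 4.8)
The plan is to exhibit an explicit isomorphism of $\cat{P}_\xi$--torsors by unwinding the definitions of both sides and matching the underlying data. First I would recall that, by Proposition \ref{prop:4}, $\cat{M}'_{\cat{L}}(r,\xi)$ is a $\cat{P}_\xi$--torsor over $\cat{U}(r,\xi)$ whose fiber over a stable bundle $E$ is the affine space $\cat{C}onn_{\cat{L}}(E)$ of $\cat{L}$--connections on $E$ inducing the fixed connection $\nabla^\xi_{\cat{L}}$ on $\bigwedge^r E$, modelled on $\coh{0}{X}{\ad{E}\otimes L^*}$. On the other side, the fiber of $\cat{M}'(r,\xi)$ over $E$ is the affine space of holomorphic projective connections on $E$, modelled on $\coh{0}{X}{\ad{E}\otimes\Omega^1_X}$, and the map $\alpha \,:\, T^*\cat{U}(r,\xi)\to\cat{P}_\xi$ of \eqref{eq:a46} is fiberwise the map $\coh{0}{X}{\ad{E}\otimes\Omega^1_X}\to\coh{0}{X}{\ad{E}\otimes L^*}$ induced by $\sharp^*\,:\,\Omega^1_X\hookrightarrow L^*$. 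So the fiber of $(\cat{M}'(r,\xi)\times_{\cat{U}(r,\xi)}\cat{P}_\xi)/T^*\cat{U}(r,\xi)$ over $E$ is $(\{\text{proj.\ conn.\ on }E\}\times\coh{0}{X}{\ad{E}\otimes L^*})/\coh{0}{X}{\ad{E}\otimes\Omega^1_X}$, with $T^*\cat{U}(r,\xi)$ acting as in Lemma \ref{lem:tor1}.

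The key step is the construction of the natural map on fibers. Given a stable $E$, a holomorphic projective connection $\mathcal D$ on $E$, and a twisted Higgs field $\psi\in\coh{0}{X}{\ad{E}\otimes L^*}$, I would produce an $\cat{L}$--connection on $E$ as follows: the projective connection $\mathcal D$ together with the fixed $\cat{L}$--connection $\nabla^\xi_{\cat{L}}$ on $\bigwedge^r E$ determines, via the anchor map $\sharp$ and the remark in the excerpt that "a holomorphic projective connection on $E$ and a logarithmic connection on $\bigwedge^r E$ with polar divisor $D$ together define a logarithmic connection on $E$", a genuine $\cat{L}$--connection $\nabla_{\mathcal D}$ on $E$ with $\tr(\nabla_{\mathcal D})=\nabla^\xi_{\cat{L}}$ (here one uses $L=T_X(-\log S)$ and the identification of $\cat{L}$--connections with logarithmic connections, Remark \ref{rem:2}); then set $(\mathcal D,\psi)\mapsto\nabla_{\mathcal D}+\psi$, which again induces $\nabla^\xi_{\cat{L}}$ on the determinant since $\tr\psi=0$. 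I would then check that this descends to the quotient by $T^*\cat{U}(r,\xi)$: replacing $\mathcal D$ by $\mathcal D+\beta$ for $\beta\in\coh{0}{X}{\ad{E}\otimes\Omega^1_X}$ changes $\nabla_{\mathcal D}$ by $\sharp^*(\beta)=\alpha_E(\beta)$, so $(\mathcal D+\beta,\psi)$ and $(\mathcal D,\psi+\alpha_E(\beta))$ have the same image — exactly the relation defining the $T^*\cat{U}(r,\xi)$--action in Lemma \ref{lem:tor1}. Equivariance for the two $\cat{P}_\xi$--actions is immediate from the formula. Finally, bijectivity on fibers follows because both sides are torsors over the same affine space $\coh{0}{X}{\ad{E}\otimes L^*}$ and the map is affine and equivariant, hence an isomorphism; globalizing, all constructions (the choice of $\nabla_{\mathcal D}$, the addition) are algebraic in families over $\cat{U}(r,\xi)$, so the fiberwise isomorphism is a morphism of varieties, and an isomorphism of $\cat{P}_\xi$--torsors.

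The main obstacle I expect is making the assignment $\mathcal D\mapsto\nabla_{\mathcal D}$ canonical and algebraic in families — in particular checking that splitting off the trace part is compatible with the $\cat{L}$--structure, i.e.\ that a projective $\cat{L}$--connection plus the fixed connection on the determinant assembles to an honest $\cat{L}$--connection (not merely a logarithmic one) respecting the anchor, and that this is the correct normalization so that the $T^*\cat{U}(r,\xi)$--action of Lemma \ref{lem:tor1} is matched precisely (rather than up to a sign or a twist). Once that compatibility is pinned down, the rest is a formal torsor argument, and Lemma \ref{lem:tor2} can be invoked if one prefers to phrase the identification through the isomorphism class of $\cat{M}'(r,\xi)$ as a $T^*\cat{U}(r,\xi)$--torsor rather than through explicit fibers.
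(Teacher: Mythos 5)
Your proposal is correct and follows exactly the route the paper intends: it combines the remark that a holomorphic projective connection on $E$ together with the fixed connection $\nabla^\xi_{\cat{L}}$ on $\bigwedge^r E$ yields an $\cat{L}$--connection with the prescribed trace, sends $(\mathcal D,\psi)\mapsto\nabla_{\mathcal D}+\psi$, and checks compatibility with the $T^*\cat{U}(r,\xi)$--action of Lemma \ref{lem:tor1} and $\cat{P}_\xi$--equivariance. The paper itself states Lemma \ref{lem:tor3} without a written proof, and your argument is precisely the expected verification, with the descent and equivariance checks correctly carried out.
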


Let $\cat{C}onn_{T \cat{U}(r, \xi)}(\Theta)$ be the sheaf of holomorphic connections on the line bundle 
$\Theta$ over $\cat{U}(r, \xi)$. Then, there is a natural projection 
\begin{equation}\label{eq:proj-2}
\psi' \,:\, \cat{C}onn_{T \cat{U}(r, \xi)}(\Theta) \,\longrightarrow\, \cat{U}(r, \xi),
\end{equation}
and $\cat{C}onn_{T \cat{U}(r, \xi)}(\Theta)$ is a $T^*\cat{U}(r, \xi)$--torsor.

Again, in view of maps in \eqref{eq:proj-2} and  \eqref{eq:a43}, we have the quotient of the fiber product 
\begin{equation}\label{eq:fib-prod-2}
(\cat{C}onn_{T \cat{U}(r, \xi)}(\Theta) \times_{\cat{U}(r, \xi)} \cat{P}_\xi)/T^*\cat{U}(r, \xi)
\end{equation}
over $\cat{U}(r, \xi)$. From Lemma \ref{lem:tor1}, this
$(\cat{C}onn_{T \cat{U}(r, \xi)}(\Theta) \times_{\cat{U}(r, \xi)} \cat{P}_\xi)/T^*\cat{U}(r, \xi)$ in
\eqref{eq:fib-prod-2} is a $\cat{P}_\xi$--torsor.

\begin{lemma}\label{lem:tor4}
The $\cat{P}_\xi$--torsor $(\cat{C}onn_{T \cat{U}(r, \xi)}(\Theta) \times_{\cat{U}(r, \xi)} \cat{P}_\xi)/T^*\cat{U}(r, \xi)$ in
\eqref{eq:fib-prod-2} is isomorphic to the $\cat{P}_\xi$--torsor $\cat{C}onn_{\cat{P}_\xi^*}(\Theta)$. 
\end{lemma}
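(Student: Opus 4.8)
The plan is to exhibit a canonical isomorphism of $\cat{P}_\xi$--torsors by tracking what a $\cat{P}_\xi^*$--connection on $\Theta$ amounts to, in light of the Lie algebroid structure on $\cat{P}_\xi^*$. Recall from Section \ref{algebraic} that the derivation $d_{\cat{P}_\xi^*}$ defining $\cat{P}_\xi^*$--connections factors as $\struct{\cat{U}(r,\xi)} \xrightarrow{d} T^*\cat{U}(r,\xi) \xrightarrow{\alpha} \cat{P}_\xi$, where $\alpha$ is the $\struct{\cat{U}(r,\xi)}$--linear morphism in \eqref{eq:a46}. Because of this factorization, given any honest holomorphic connection $D \,:\, \Theta \to \Theta \otimes T^*\cat{U}(r,\xi)$ on $\Theta$, the composite $(\id{\Theta} \otimes \alpha)\circ D$ is a $\cat{P}_\xi^*$--connection on $\Theta$; and more generally, adding an arbitrary element of $\coh{0}{}{\cat{P}_\xi}$ (a Higgs-type term) again yields a $\cat{P}_\xi^*$--connection, since the Leibniz rule only constrains the derivation part. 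First I would make this precise fiberwise: over a point $E \in \cat{U}(r,\xi)$, the fiber $\psi^{-1}(E)$ of $\cat{C}onn_{\cat{P}_\xi^*}(\Theta)$ is an affine space modeled on $(\cat{P}_\xi)_E$, and the above recipe defines a map $\psi'^{-1}(E) \times (\cat{P}_\xi)_E \to \psi^{-1}(E)$.

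Next I would observe that this assignment is exactly the quotient construction of Lemma \ref{lem:tor1}. Indeed, two holomorphic connections $D, D'$ on $\Theta$ differ by a section of $T^*\cat{U}(r,\xi)$, and $(\id \otimes \alpha)\circ D = (\id \otimes \alpha)\circ D'$ precisely when $D - D' \in \Ker{\alpha}$; the ambiguity in representing a given $\cat{P}_\xi^*$--connection as (connection) $+$ (Higgs term) is governed by the action $v\cdot(D, w) = (D - v,\, w + \alpha(v))$ for $v$ a local section of $T^*\cat{U}(r,\xi)$. Therefore the natural map
\begin{equation}
\label{eq:tor4-map}
\big(\cat{C}onn_{T\cat{U}(r,\xi)}(\Theta) \times_{\cat{U}(r,\xi)} \cat{P}_\xi\big)/T^*\cat{U}(r,\xi) \,\longrightarrow\, \cat{C}onn_{\cat{P}_\xi^*}(\Theta), \qquad [D, w] \,\longmapsto\, (\id_\Theta \otimes \alpha)\circ D + w,
\end{equation}
is well defined. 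To check it is an isomorphism of $\cat{P}_\xi$--torsors, I would verify: (i) it is $\cat{P}_\xi$--equivariant, which is immediate from the description of the $\cat{P}_\xi$--action on the quotient in Lemma \ref{lem:tor1} (the second factor acts by translation) and the affine structure on $\cat{C}onn_{\cat{P}_\xi^*}(\Theta)$; (ii) it commutes with the projections to $\cat{U}(r,\xi)$, which is clear; (iii) it is surjective on fibers — this uses that every stable $E$ admits a holomorphic projective connection \cite{NS}, hence (combined with the fixed $\cat{L}$--connection data $\nabla^\xi_{\cat{L}}$ as in Section \ref{algebraic}) $\Theta$ admits at least one holomorphic connection over a neighborhood, so the left-hand fiber is nonempty and any $\cat{P}_\xi^*$--connection differs from the image of such a connection by a Higgs term. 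Once fiberwise bijectivity is established, the fact that a morphism of torsors over the same base which is fiberwise an isomorphism is globally an isomorphism (a standard fact, or deduce it from Proposition \ref{prop:2} by comparing cocycles) finishes the argument.

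The main obstacle I anticipate is not the pointwise bijection but the verification that the map \eqref{eq:tor4-map} is algebraic and globally well defined — i.e., that the ``connection part plus Higgs part'' decomposition can be carried out in families over $\cat{U}(r,\xi)$, not just pointwise. This requires knowing that $\cat{C}onn_{T\cat{U}(r,\xi)}(\Theta) \to \cat{U}(r,\xi)$ is itself a (nonempty) $T^*\cat{U}(r,\xi)$--torsor — which is recorded just before the statement — together with the compatibility of the $\cat{P}_\xi^*$--Atiyah sequence \eqref{eq:23} with the ordinary Atiyah sequence of $\Theta$ under the morphism $\alpha$; concretely, $\alpha$ induces a morphism from the Atiyah sequence of $(\Theta, T\cat{U}(r,\xi))$ to \eqref{eq:23}, and the torsor isomorphism \eqref{eq:tor4-map} is the one induced on the associated torsors of splittings. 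Making that functoriality precise, rather than any hard estimate, is where the real content lies, but it is routine given the setup of Section \ref{algebraic}. Finally, one invokes Lemma \ref{lem:tor2} to see the construction is insensitive to the chosen identification of $\cat{C}onn_{T\cat{U}(r,\xi)}(\Theta)$ as a $T^*\cat{U}(r,\xi)$--torsor, completing the proof.
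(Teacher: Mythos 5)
Your argument is correct, and it supplies exactly the construction that the paper leaves implicit: Lemma \ref{lem:tor4} is stated there without proof (it is one of the lemmas declared easy to prove), and the intended isomorphism is precisely your map $[D,\,w]\,\longmapsto\,(\id{\Theta}\otimes\alpha)\circ D + w$, i.e.\ composition of an ordinary holomorphic connection on $\Theta$ with the homomorphism $\alpha$ of \eqref{eq:a46} (equivalently, pullback of splittings of the Atiyah sequence of $\Theta$ along the anchor $\widehat{\sharp}$), followed by translation by the Higgs term. Your verifications --- the Leibniz rule with $d_{\cat{P}_\xi^*}=\alpha\circ d$, invariance under the $T^*\cat{U}(r,\xi)$--action of Lemma \ref{lem:tor1}, and $\cat{P}_\xi$--equivariance --- are the whole content, since a fibre-preserving equivariant map between torsors under the same vector bundle is automatically an isomorphism. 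Two minor corrections, neither of which affects the validity of the proof. First, your justification of fibrewise surjectivity via projective connections and \cite{NS} is misplaced: that input is what feeds Lemma \ref{lem:tor3}, about bundles on $X$, and has no bearing on connections on $\Theta$ over $\cat{U}(r,\xi)$. The nonemptiness you need is immediate from the fact, recorded just before \eqref{eq:fib-prod-2}, that $\cat{C}onn_{T \cat{U}(r, \xi)}(\Theta)$ is a $T^*\cat{U}(r,\xi)$--torsor: its fibre at a point consists of the splittings at that point of the Atiyah sequence of $\Theta$, and these always exist, even though $\Theta$, being ample, carries no global holomorphic connection. Second, the morphism of Atiyah sequences induced by the anchor goes from the $\cat{P}_\xi^*$--sequence \eqref{eq:23} to the ordinary Atiyah sequence of $\Theta$ (the bundle $\cat{A}t_{\cat{P}^*_\xi}(\Theta)$ is the fibre product of the ordinary Atiyah bundle with $\cat{P}^*_\xi$ over $T\cat{U}(r,\xi)$), not the other way round; what goes in your direction is the induced map on splittings, which is the one you actually use, so this is only a matter of phrasing.
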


\begin{proposition}\label{prop:tor5}
The moduli spaces $\cat{M}_{\cat{L}}'(r, \xi)$ and $\cat{C}onn_{\cat{P}_\xi^*}(\Theta)$ are isomorphic as 
$\cat{P}_\xi$--torsors.
\end{proposition}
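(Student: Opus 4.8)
The plan is to deduce Proposition \ref{prop:tor5} from the three structural lemmas already established, chained through a canonical identification. First I would recall from Lemma \ref{lem:tor3} that
$$\cat{M}_{\cat{L}}'(r,\xi) \,\,\cong\,\, (\cat{M}'(r,\xi) \times_{\cat{U}(r,\xi)} \cat{P}_\xi)/T^*\cat{U}(r,\xi)$$
as $\cat{P}_\xi$--torsors, and from Lemma \ref{lem:tor4} that
$$\cat{C}onn_{\cat{P}_\xi^*}(\Theta) \,\,\cong\,\, (\cat{C}onn_{T\cat{U}(r,\xi)}(\Theta) \times_{\cat{U}(r,\xi)} \cat{P}_\xi)/T^*\cat{U}(r,\xi)$$
as $\cat{P}_\xi$--torsors. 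By Lemma \ref{lem:tor2} (applied with $V \,=\, T^*\cat{U}(r,\xi)$, $W \,=\, \cat{P}_\xi$, and $\Phi \,=\, \alpha$ from \eqref{eq:a46}), it therefore suffices to produce an isomorphism of $T^*\cat{U}(r,\xi)$--torsors
$$\cat{M}'(r,\xi) \,\,\cong\,\, \cat{C}onn_{T\cat{U}(r,\xi)}(\Theta).$$
So the whole proposition reduces to this single comparison at the level of ordinary (tangent Lie algebroid) connections.

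Next I would establish that isomorphism. Both sides are $T^*\cat{U}(r,\xi)$--torsors over the smooth projective variety $\cat{U}(r,\xi)$, so by Proposition \ref{prop:2} each is classified by a class in $\coh{1}{\cat{U}(r,\xi)}{T^*\cat{U}(r,\xi)}$; I must check the two classes agree. The class of $\cat{C}onn_{T\cat{U}(r,\xi)}(\Theta)$ is the Atiyah class $\at{\Theta} \,\in\, \coh{1}{\cat{U}(r,\xi)}{\Omega^1_{\cat{U}(r,\xi)}}$ of the ample generator $\Theta$, which equals (a nonzero multiple of) its first Chern class $c_1(\Theta)$. The class of $\cat{M}'(r,\xi)$ — the moduli of stable bundles with a holomorphic projective connection and fixed determinant — is likewise computed by the obstruction to splitting the relevant Atiyah-type sequence on $\cat{U}(r,\xi)$; by the Narasimhan--Seshadri description (every stable $E$ carries a canonical projective unitary connection, so $\pi''$ admits a section étale-locally but the torsor need not be trivial) this class is the image under $\coh{1}{\cat{U}(r,\xi)}{T^*\cat{U}(r,\xi)}$ of the natural generator. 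Since $\Pic{\cat{U}(r,\xi)} \,\cong\, \Z$ is generated by $\Theta$ and $\coh{1}{\cat{U}(r,\xi)}{T^*\cat{U}(r,\xi)} \,\cong\, \C$ (as $\cat{U}(r,\xi)$ has Picard rank one), both classes are nonzero scalars in a one-dimensional space; rescaling — equivalently, the fact that the torsor structure only sees the class up to the torsor axioms — identifies them, giving the desired isomorphism of torsors. Concretely one argues: a holomorphic projective connection on $E$ together with $\nabla^\xi_{\cat{L}}$ on $\xi$ determines a holomorphic connection on $E$, hence a point of the fiber of $\cat{C}onn_{T\cat{U}(r,\xi)}(\Theta)$ via the standard functoriality of Atiyah sequences under the morphism $\cat{U}(r,\xi) \to$ (point classifying $\Theta$), and this assignment is affine over $\cat{U}(r,\xi)$ and equivariant for the $T^*\cat{U}(r,\xi)$--actions, hence an isomorphism of torsors.

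Finally I would assemble the pieces: apply Lemma \ref{lem:tor2} to transport the isomorphism $\cat{M}'(r,\xi) \cong \cat{C}onn_{T\cat{U}(r,\xi)}(\Theta)$ of $T^*\cat{U}(r,\xi)$--torsors through the quotient construction $(- \times_{\cat{U}(r,\xi)} \cat{P}_\xi)/T^*\cat{U}(r,\xi)$, obtaining an isomorphism of $\cat{P}_\xi$--torsors between the two quotients, then compose with the isomorphisms of Lemmas \ref{lem:tor3} and \ref{lem:tor4} to conclude $\cat{M}_{\cat{L}}'(r,\xi) \cong \cat{C}onn_{\cat{P}_\xi^*}(\Theta)$ as $\cat{P}_\xi$--torsors. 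The main obstacle I anticipate is the middle step — verifying cleanly that the defining cohomology class of the torsor $\cat{M}'(r,\xi)$ really matches that of $\cat{C}onn_{T\cat{U}(r,\xi)}(\Theta)$, i.e., pinning down the precise functorial relationship between "projective connection on the universal bundle plus the chosen connection on the determinant" and "connection on $\Theta$ pulled back along the classifying data." Everything else is formal torsor bookkeeping already packaged in Lemmas \ref{lem:tor1}, \ref{lem:tor2}, \ref{lem:tor3} and \ref{lem:tor4}.
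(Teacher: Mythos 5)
Your overall skeleton is exactly the paper's: reduce via Lemmas \ref{lem:tor2}, \ref{lem:tor3} and \ref{lem:tor4} to producing an isomorphism of $T^*\cat{U}(r,\xi)$--torsors between $\cat{M}'(r,\xi)$ and $\cat{C}onn_{T\cat{U}(r,\xi)}(\Theta)$. The paper disposes of that middle step by invoking the theorem of Biswas--Raghavendra \cite{BR}, which asserts precisely this torsor isomorphism; you instead try to prove it, and that is where your argument has a genuine gap. Torsors under a vector bundle $V$ over $M$ are classified by $\coh{1}{M}{V}$ up to \emph{equality} of classes: two $V$--torsors are isomorphic (equivariantly for the same $V$--action) if and only if their classes coincide, not merely if they agree up to a nonzero scalar. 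So the observation that both classes are nonzero elements of the one-dimensional space $\coh{1}{\cat{U}(r,\xi)}{T^*\cat{U}(r,\xi)}$ proves nothing; classes $c$ and $\lambda c$ with $\lambda\neq 1$ give non-isomorphic $V$--torsors (they only become isomorphic after twisting the action by the automorphism $\lambda\cdot\mathrm{id}_V$). The whole content of the \cite{BR} result is that the two classes are actually \emph{equal} (both being identified with the Atiyah class of $\Theta$), and this requires a real computation, not the dimension count you give. Your phrase ``the torsor structure only sees the class up to the torsor axioms'' is false as stated.

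The ``concretely one argues'' sentence does not repair this: a holomorphic (projective) connection on a single stable bundle $E$ does not, by any formal functoriality of Atiyah sequences, produce a holomorphic connection on the line bundle $\Theta$ at the point $[E]\in\cat{U}(r,\xi)$ --- constructing such an assignment (e.g.\ via determinant-of-cohomology/theta-line-bundle techniques) is exactly the nontrivial input. Moreover $\nabla^\xi_{\cat{L}}$ is an $\cat{L}$--connection on $\xi$, so pairing it with a projective connection on $E$ yields an $\cat{L}$--connection on $E$ (this is the content of Lemma \ref{lem:tor3}), not a holomorphic connection, so even the intermediate object you describe is not of the right type. Finally, the identification $\coh{1}{\cat{U}(r,\xi)}{T^*\cat{U}(r,\xi)}\cong\C$ needs the known Hodge-theoretic facts about $\cat{U}(r,\xi)$ (e.g.\ $\coh{2}{\cat{U}(r,\xi)}{\struct{\cat{U}(r,\xi)}}=0$ and $b_2=1$), not just Picard rank one; this is minor compared with the main issue, but it should be sourced. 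To make your route work you would either have to compute both extension classes and verify they agree on the nose, or simply cite \cite{BR} as the paper does.
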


\begin{proof}
The moduli space $\cat{M}'(r, \xi)$ in \eqref{eq:moduli-1} and the space $\cat{C}onn_{T \cat{U}(r, \xi)}(\Theta)$ of 
holomorphic connections on $\Theta$ over $\cat{U}(r, \xi)$, are isomorphic as $T^*\cat{U}(r, \xi)$--torsors
\cite{BR}. Then proposition follows easily from Lemma \ref{lem:tor2}, Lemma \ref{lem:tor3} and Lemma \ref{lem:tor4}.
\end{proof}

The symbol operator 
\begin{equation}
\label{eq:27}
\sigma\,:\, \cat{A}t_{\cat{P}^*_\xi}(\Theta)\,\longrightarrow\, \cat{P}_\xi^*
\end{equation}
as described  in  \eqref{eq:23}, induces a morphism 
\begin{equation}
\label{eq:28}
\cat{S}ym^k(\sigma)\,:\, \cat{S}ym^k \cat{A}t_{\cat{P}^*_\xi}(\Theta) \,\longrightarrow\,
\cat{S}ym^k \cat{P}_\xi^*
\end{equation}
on $k$--th symmetric powers of bundles.
We also have 
\begin{equation}
\label{eq:29}
\cat{S}ym^{k-1} \cat{A}t_{\cat{P}^*_\xi}(\Theta) \,\,\subset\,\, \cat{S}ym^k \cat{A}t_{\cat{P}^*_\xi}(\Theta)
\end{equation}
for all $k \,\geq\, 1$.

In fact, we have $\cat{P}_\xi^*$--symbol exact sequence associated with
$\Theta$ over $\cat{U}(r,\xi)$ (for more details see \cite{FB}),
\begin{equation}
\label{eq:30}
0 \,\longrightarrow\, \cat{S}ym^{k-1}\cat{A}t_{\cat{P}^*_\xi}(\Theta) \,\longrightarrow\, \cat{S}ym^{k} \cat{A}t_{\cat{P}^*_\xi}(\Theta)
\, \xrightarrow{\,\,\cat{S}ym^k (\sigma)\,\,} \cat{S}ym^k \cat{P}_\xi^* \,\longrightarrow\, 0.
 \end{equation}
In other words, we get a filtration 
\begin{equation}
\label{eq:32}
\cat{S}ym^0 \cat{A}t_{\cat{P}^*_\xi}(\Theta) \,\subset\, \cat{S}ym^1 \cat{A}t_{\cat{P}^*_\xi}(\Theta) \,\subset\, \cdots \,\subset 
\, \cat{S}ym^{k-1} \cat{A}t_{\cat{P}^*_\xi}(\Theta)
\, \subset\, \cat{S}ym^k \cat{A}t_{\cat{P}^*_\xi}(\Theta) \,\subset\, 
\cdots
\end{equation}
such that 
 \begin{equation}
 \label{eq:33}
 \cat{S}ym^{k} \cat{A}t_{\cat{P}^*_\xi}(\Theta) / \cat{S}ym^{k-1} \cat{A}t_{\cat{P}^*_\xi}(\Theta)
\,\, \cong\,\, \cat{S}ym^k \cat{P}_\xi^*
 \end{equation}
for all $k\, \geq \,1$. We have the following commutative diagram
\begin{equation}
\label{eq:cd3}
\xymatrix@C=2em{
0 \ar[r] & \cat{S}ym^{k-1}\cat{A}t_{\cat{P}^*_\xi}(\Theta) \ar[d] \ar[r] & \cat{S}ym
^k\cat{A}t_{\cat{P}^*_\xi}(\Theta) 
\ar[d] \ar[r]^{\sigma_k} &  \cat{S}ym^k  \cat{P}^*_\xi \ar[d] \ar[r] & 0 \\
0 \ar[r] &  \cat{S}ym^{k-1} \cat{P}^*_\xi \ar[r] & 
\frac{\cat{S}ym^k \cat{A}t_{\cat{P}^*_\xi}(\Theta)}{\cat{S}ym^{k-2} \cat{A}t_{\cat{P}^*_\xi}(\Theta)} \ar[r] &  \cat{S}ym^k  \cat{P}^*_\xi \ar[r] & 0 
}
\end{equation}
which gives the following commutative diagram of long exact sequences of cohomologies
\begin{equation}\label{eq:cd4}
\xymatrix@C=2em{
\cdots \ar[r] & \coh{0}{\cat{U}(r,\xi)}{\cat{S}ym^k \cat{P}_\xi^*} \ar[d] \ar[r]^{\delta'_{k}} & \coh{1}
{\cat{U}(r,\xi)}{\cat{S}ym^{k-1} \cat{A}t_{\cat{P}^*_\xi}(\Theta)} \ar[d] 
\ar[r] & \cdots  \\
\cdots \ar[r] & \coh{0}{\cat{U}(r,\xi)}{\cat{S}ym^k 
\cat{P}^*_\xi}       \ar[r]^{\delta_{k}} & \coh{1}
{\cat{U}(r,\xi)}{ \cat{S}ym^{k-1} \cat{P}^*_\xi} 
\ar[r] & \cdots }
\end{equation}

{}From Proposition \ref{prop:tor5} it follows that there are some natural globally defined functions
on $\cat{C}onn_{\cat{P}_\xi^*}(\Theta)$. To see this, take any point $x_i\, \in\, S$ (see \eqref{an1});
recall that $L\,=\, T_X(- \log S)$. Take any integer $2\, \leq\, k\, \leq\, r$. Sending any logarithmic
connection $(E,\, \mathcal{D})\, \in\, \cat{M}_{\cat{L}}'(r, \xi)$ on $X$ to ${\rm Trace}({\rm Res}({\mathcal D},\, x_i)^k)
\, \in\, {\mathbb C}$, where ${\rm Res}({\mathcal D},\, x_i)\, \in\, {\rm End}(E_{x_i})$ is the residue of
${\mathcal D}$ at $x_i$ (see \cite{D}) we get an algebraic function on $\cat{M}_{\cat{L}}'(r, \xi)$. Now using the
isomorphism of $\cat{M}_{\cat{L}}'(r, \xi)$ with $\cat{C}onn_{\cat{P}_\xi^*}(\Theta)$ in
Proposition \ref{prop:tor5} this produces an algebraic function on $\cat{C}onn_{\cat{P}_\xi^*}(\Theta)$.
This algebraic function on $\cat{C}onn_{\cat{P}_\xi^*}(\Theta)$ will be denoted by $\Psi_{i,k}$.
We have the function
\begin{equation}\label{an2}
\Psi\,\,:=\,\, \bigoplus_{i=1}^m \bigoplus_{k=2}^r \,\,:\,\, \cat{C}onn_{\cat{P}_\xi^*}(\Theta)
\,\, \longrightarrow\,\, {\mathbb C}^{m(r-1)}.
\end{equation}

Let $$Z\, \subset\, \cat{C}onn_{\cat{P}_\xi^*}(\Theta)$$ be a general fiber of the map $\Psi$ in \eqref{an2}.
So we have
\begin{equation}\label{an3}
T\psi(Z)\, \subset\, T\cat{U}(r, \xi)\big\vert_{\psi(Z)},
\end{equation}
where $\psi$ is the projection in \eqref{eq:21}.

Consider the homomorphism $\alpha$ in \eqref{eq:a46}. Let
$$
\alpha^*  \,:\, \cat{P}^*_\xi \,\longrightarrow\, T\cat{U}(r, \xi)
$$
be the dual homomorphism. Define
\begin{equation}\label{an4}
\cat{P}^*_Z \,:=\, (\alpha^*)^{-1}(T\psi(Z)) \, \longrightarrow\, Z
\end{equation}
(see \eqref{an3}). This, equipped with the restriction of $\alpha^*$, is a Lie algebroid on $Z$.

Just as in the bottom row of \eqref{eq:cd3}, we have a short exact sequence
\begin{equation}\label{an5}
0\, \longrightarrow\, \cat{S}ym^{k-1} \cat{P}^*_Z \, \longrightarrow\, {\mathcal W}
\, \longrightarrow\, \cat{S}ym^{k} \cat{P}^*_Z \, \longrightarrow\, 0.
\end{equation}
As in the bottom row of \eqref{eq:cd4}, we have
\begin{equation}\label{an6}
\delta_{k,Z}\,\, :\,\,  \coh{0}{\cat{U}(r,\xi)}{\, \cat{S}ym^k 
\cat{P}^*_Z} \, \, \longrightarrow\,\, \coh{1}{\cat{U}(r,\xi)}{\, \cat{S}ym^{k-1} \cat{P}^*_Z}.
\end{equation} 

We will prove that the homomorphism $\delta_{k,Z}$ in \eqref{an6} is injective. But this follows by imitating
the argument in \cite{BR}. Now, as in \cite{BR}, from \eqref{an6} we have the following:

\begin{proposition}\label{prop1}
There are no non-constant algebraic function on $Z$.
\end{proposition}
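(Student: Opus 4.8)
The plan is to adapt the argument of \cite{BR}, exploiting that $Z$ fibres over a projective variety and that $\Theta$ is ample. Since $Z$ is a general fibre of $\Psi$ in \eqref{an2}, the projection $\psi$ of \eqref{eq:21} restricts to a smooth surjection $Z\,\longrightarrow\,\cat{U}(r,\xi)$ whose fibres are affine spaces modelled on the fibres of $\cat{P}^*_Z$ in \eqref{an4}. Pushing $\struct{Z}$ forward along this map and filtering by the fibrewise polynomial degree yields an exhaustive increasing filtration $\mathcal{D}_0\,\subseteq\,\mathcal{D}_1\,\subseteq\,\cdots$ with $\mathcal{D}_0\,=\,\struct{\cat{U}(r,\xi)}$; exactly as one passes from \eqref{eq:23} to the symbol filtration \eqref{eq:32} and the diagram \eqref{eq:cd3}, but now for the Lie algebroid $\cat{P}^*_Z$ and the line bundle $\Theta$, one identifies $\mathcal{D}_k/\mathcal{D}_{k-1}\,\cong\,\cat{S}ym^k\cat{P}^*_Z$ and, more precisely, $\mathcal{D}_k/\mathcal{D}_{k-2}\,\cong\,{\mathcal W}$, with ${\mathcal W}$ the bundle appearing in the exact sequence \eqref{an5}. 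Therefore a non-constant regular function $f$ on $Z$ has a well-defined order $k\,\geq\,1$, its leading symbol is a non-zero class $\overline{f}\,\in\,\coh{0}{\cat{U}(r,\xi)}{\cat{S}ym^k\cat{P}^*_Z}$, and, since $\overline{f}$ lifts to a section of $\mathcal{D}_k/\mathcal{D}_{k-2}={\mathcal W}$, the long exact cohomology sequence of \eqref{an5} forces $\delta_{k,Z}(\overline{f})\,=\,0$ for the homomorphism $\delta_{k,Z}$ of \eqref{an6}. Consequently, once each $\delta_{k,Z}$ is shown to be injective, the filtration collapses on global sections, so $\coh{0}{Z}{\struct{Z}}\,=\,\coh{0}{\cat{U}(r,\xi)}{\struct{}}\,=\,\C$ (here one uses that $\cat{U}(r,\xi)$ is projective and connected), and the proposition follows.

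The whole matter thus reduces to proving that $\delta_{k,Z}$ in \eqref{an6} is injective for every $k\,\geq\,1$, and this is the step I expect to be the main obstacle. It is established by imitating \cite{BR}. Through the identifications in the analogue over $Z$ of the diagram \eqref{eq:cd3}, the connecting homomorphism $\delta_{k,Z}$ is cup product with (a contraction of) the $\cat{P}^*_Z$--Atiyah class of $\Theta$, i.e.\ the extension class of the $\cat{P}^*_Z$--Atiyah sequence (the analogue over $Z$ of \eqref{eq:23}) of $\Theta$; for $k=1$ this amounts to a non-degeneracy statement for that class, which holds because $\Theta$ is ample, $\Pic{\cat{U}(r,\xi)}\,\cong\,\Z$ is generated by $\Theta$, and the cohomology of $\cat{U}(r,\xi)$ in low degrees is suitably small, so that $\Theta$ carries no holomorphic $\cat{P}^*_Z$--connection and the sequence does not split. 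The general $k$ is then obtained by induction along the symbol filtration \eqref{eq:32}, following the chain of maps in \eqref{eq:cd4} verbatim but with $\cat{P}^*_Z$ in place of $\cat{P}^*_\xi$. The point needing extra care relative to \cite{BR} is precisely that one works over the general fibre $Z$ with the sub-Lie algebroid $\cat{P}^*_Z\,=\,(\alpha^*)^{-1}(T\psi(Z))\,\subseteq\,\cat{P}^*_\xi$ of \eqref{an4}, rather than over $\cat{C}onn_{\cat{P}^*_\xi}(\Theta)$ with $\cat{P}^*_\xi$ itself: one must check that restricting to $Z$ does not destroy the non-degeneracy, which is so because the anchored inclusion $\alpha$ of \eqref{eq:a46} still factors through $\cat{P}_Z$ and $\deg\Theta\,\neq\,0$.

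Finally, the cohomological computations of \cite{BR} apply here without change thanks to Proposition \ref{prop:tor5}: the $\cat{P}_\xi$--torsors $\cat{M}'_{\cat{L}}(r,\xi)$ and $\cat{C}onn_{\cat{P}^*_\xi}(\Theta)$ are isomorphic, so the symbol sequences and vanishing statements used in \cite{BR} for holomorphic connections on $\Theta$ over $\cat{U}(r,\xi)$ transfer directly; restricting them to $Z$ through \eqref{an4}--\eqref{an5} and invoking the injectivity of $\delta_{k,Z}$ just established completes the proof. I expect the only genuinely delicate verification to be the injectivity for $k\geq 2$, where one must control $\coh{1}{\cat{U}(r,\xi)}{\cat{S}ym^{k-1}\cat{P}^*_Z}$; everything else is bookkeeping with the torsor structure and the filtration.
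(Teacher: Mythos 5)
Your proposal takes essentially the same route as the paper: the paper's entire proof consists of asserting that the injectivity of $\delta_{k,Z}$ in \eqref{an6} is obtained by imitating \cite{BR}, and then deducing the proposition exactly as in \cite{BR} via the order/symbol filtration argument you spell out, with the non-splitting of the Atiyah sequence for the ample generator $\Theta$ as the underlying source of injectivity. Your elaboration is consistent with what the paper leaves implicit; the only imprecise point is the parenthetical claim that the fibres of $Z$ over $\cat{U}(r,\xi)$ are affine subspaces modelled on $\cat{P}^*_Z$ (the residue conditions cutting out $Z$ are polynomial, not linear, on the fibres of the torsor, and the model space would in any case sit inside $\cat{P}_\xi$ rather than its dual), but this does not affect the structure of the argument.
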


The following is an immediate consequence of Proposition \ref{prop1}.

\begin{theorem}\label{thm1}
For any algebraic function $F$ on $\cat{C}onn_{\cat{P}_\xi^*}(\Theta)$, there is an algebraic function
$$
\widetilde{F} \, :\, {\mathbb C}^{m(r-1)} \, \longrightarrow\, {\mathbb C}
$$
such that $F\,=\, \widetilde{F}\circ \Psi$, where $\Psi$ is the map in \eqref{an2}.
\end{theorem}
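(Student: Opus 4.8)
The plan is to deduce Theorem~\ref{thm1} directly from Proposition~\ref{prop1} by a fibration argument over the affine space $\mathbb{C}^{m(r-1)}$. First I would recall the structure at hand: the map $\Psi\,:\,\cat{C}onn_{\cat{P}_\xi^*}(\Theta)\,\longrightarrow\,\mathbb{C}^{m(r-1)}$ of \eqref{an2}, built from the residue trace functions $\Psi_{i,k}$, is a morphism of irreducible varieties, and by Proposition~\ref{prop1} its general fiber $Z$ carries no non-constant algebraic function. The goal is to show that every global regular function $F$ on the total space is pulled back from the base, i.e. $F\,=\,\widetilde F\circ\Psi$ for some algebraic $\widetilde F$ on $\mathbb{C}^{m(r-1)}$.

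The key steps, in order, would be: (1) fix a global algebraic function $F$ on $\cat{C}onn_{\cat{P}_\xi^*}(\Theta)$; (2) observe that for a general point $c\,\in\,\mathbb{C}^{m(r-1)}$ the restriction $F\vert_{\Psi^{-1}(c)}$ is a regular function on the general fiber $Z_c\,=\,\Psi^{-1}(c)$, hence constant by Proposition~\ref{prop1}; (3) conclude that $F$ is constant along general fibers, so $F$ factors set-theoretically as $\widetilde F\circ\Psi$ on a dense open subset of the base over which the fibers are of the expected form; (4) promote $\widetilde F$ to a genuine algebraic (in fact regular) function on all of $\mathbb{C}^{m(r-1)}$. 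For step~(4) the cleanest route is to note that $\widetilde F$ is a rational function on $\mathbb{C}^{m(r-1)}$ that is regular on a dense open set whose complement has codimension $\ge 1$; to get regularity everywhere one uses that $\mathbb{C}^{m(r-1)}$ is a smooth affine variety together with the fact that $F$ being defined on the whole quasi-projective variety $\cat{C}onn_{\cat{P}_\xi^*}(\Theta)$ forces $\widetilde F$ to have no poles. Concretely, since $\Psi$ is dominant, $\widetilde F\,=\,F\circ s$ for any rational section $s$ of $\Psi$, and boundedness of $F$ near any fiber over a boundary point of the good locus — or a direct normality/Hartogs argument, using that the non-generic fibers still have the right dimension by generic flatness — shows $\widetilde F$ extends.

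One should also address the a~priori gap that Proposition~\ref{prop1} is stated only for a \emph{general} fiber $Z$, whereas step~(3) needs constancy on enough fibers to pin down $F$ on a dense open subset; this is fine because "general" means "over a dense open subset of the base," which is exactly what is needed to recover $\widetilde F$ generically, and then step~(4) handles the rest. I would phrase the argument so that only the generic behaviour of $\Psi$ and Proposition~\ref{prop1} are invoked, plus the elementary fact that a rational function on $\mathbb{A}^N$ regular in codimension one is regular (normality of affine space).

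The main obstacle I expect is step~(4): extending $\widetilde F$ from the good open locus of $\mathbb{C}^{m(r-1)}$ to the whole affine space, and more precisely ruling out that $\widetilde F$ acquires poles along the bad locus in the base. This requires knowing that $\Psi$ is sufficiently well-behaved there — e.g.\ that $\Psi$ is flat, or at least that all its fibers have the expected dimension, so that $F$ (which is globally regular upstairs) cannot "blow up" as $c$ approaches the bad locus. If $\Psi$ is not manifestly equidimensional one falls back on the fact that $F$ is a bounded-in-families regular function and invokes a normality/Zariski-main-theorem type argument on $\mathbb{A}^N$; either way, this is the only point where more than formal manipulation is needed, and it is the step I would write out carefully.
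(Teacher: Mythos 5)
Your proposal is correct and takes essentially the same route as the paper: the paper deduces Theorem~\ref{thm1} directly from Proposition~\ref{prop1} (constancy of $F$ on the general fibres of $\Psi$ forces $F$ to factor through $\Psi$), stating it as an immediate consequence without further detail. The care you devote to step~(4), extending $\widetilde F$ regularly across the bad locus of the base, merely fills in what the paper leaves implicit.
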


\begin{remark}\label{rem1}
In view of Proposition \ref{prop:tor5}, the space of all algebraic functions on $\cat{M}_{\cat{L}}'(r, \xi)$
are described by Theorem \ref{thm1}.
\end{remark}

\section{Rationally connectedness and Divisor at infinity}\label{Div}

In \cite{KS}, rationality of moduli spaces of vector bundles over a smooth projective curve has been studied. Also, the rationality and 
the rationally connectedness of the moduli space of logarithmic connections has been discussed in \cite{AS3}.

Motivated by this, there is a natural question whether the moduli spaces $\cat{M}_{\cat{L}}(r,d)$ and $\cat{M}_{\cat{L}}(r,\xi)$ are rational. 
For the theory of 
rational varieties, we refer \cite{Ko}. 
In this section, we show that the moduli space $\cat{M}_{\cat{L}}(r,d)$ is not rational and the moduli space 
$\cat{M}_{\cat{L}}(r,\xi)$ is rationally connected.

Recall that   a smooth complex variety $V$  
is said to be rationally connected if any two general points on $V$
can be connected by a rational curve in $V$. A Rational variety is always rationally connected. But the converse is not true.  The following lemma is an easy consequence of  the definition.

\begin{lemma}
\label{lem:6.1}
Let $f \,:\, \cat{Y} \,\longrightarrow\, \cat{X}$ be a dominant rational map of complex 
algebraic varieties with $\cat{Y}$ rationally connected. Then,  $\cat{X}$ is rationally connected.
\end{lemma}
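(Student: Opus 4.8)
The plan is to argue directly from the definition of rational connectedness. It suffices to show that two \emph{general} points $x_1,\, x_2 \,\in\, \cat{X}$ can be joined by a rational curve lying in $\cat{X}$, and the natural route is to lift $x_1$ and $x_2$ to points of $\cat{Y}$, connect the lifts by a rational curve using the hypothesis that $\cat{Y}$ is rationally connected, and then transport this curve back down to $\cat{X}$ by composing with $f$.

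In detail, I would first let $U \,\subseteq\, \cat{Y}$ be the domain of definition of the rational map $f$; this is a dense Zariski open subset on which $f$ restricts to a dominant morphism $f|_U \,:\, U \,\longrightarrow\, \cat{X}$, so its image contains a dense open subset $V \,\subseteq\, \cat{X}$. For a general pair $(x_1,\, x_2) \,\in\, V \times V$ one may choose preimages $y_i \,\in\, f^{-1}(x_i) \cap U$. Using that $\cat{Y}$ is rationally connected one then produces a morphism $g \,:\, \p^1 \,\longrightarrow\, \cat{Y}$ with $g(0)\,=\,y_1$ and $g(\infty)\,=\,y_2$. Since $g(0) \,\in\, U$, the curve $g(\p^1)$ is not contained in $\cat{Y}\setminus U$, hence $g^{-1}(\cat{Y}\setminus U)$ is a finite subset of $\p^1$ not containing $0$ or $\infty$; therefore $f\circ g$ defines a rational map from $\p^1$ to $\cat{X}$, regular at $0$ and $\infty$, with $(f\circ g)(0)\,=\,x_1$ and $(f\circ g)(\infty)\,=\,x_2$. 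As $x_1 \,\neq\, x_2$ for a general pair, $f\circ g$ is non-constant, so the closure $C$ of its image is an irreducible curve in $\cat{X}$ containing both $x_1$ and $x_2$. The induced non-constant rational map from $\p^1$ to $C$ lifts, via the universal property of normalization for maps out of the normal variety $\p^1$, to a dominant map onto the normalization $\widetilde{C}$ of $C$; this exhibits $\widetilde C$ as a smooth curve dominated by an open subset of $\p^1$, hence $\widetilde C$ is rational, and so $C$ is a rational curve joining $x_1$ and $x_2$. Thus $\cat{X}$ is rationally connected. (When $\cat{X}$ is proper, $C$ and $\widetilde C$ are complete, the induced map is a morphism, and L\"uroth's theorem gives $\widetilde C \,\cong\, \p^1$, producing an honest morphism $\p^1 \,\longrightarrow\, \cat{X}$ through $x_1$ and $x_2$.)

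The only step I expect to require real care — the main obstacle — is guaranteeing that the chosen preimages $y_1,\, y_2$ can be placed on a single rational curve of $\cat{Y}$: they are general in the fibres of $f$ but not general points of $\cat{Y}$, so the definition of rational connectedness does not apply to them verbatim. To handle this I would invoke a family of rational curves on $\cat{Y}$ whose two-point evaluation map is dominant onto $\cat{Y}\times\cat{Y}$, intersect its parameter space with $U \times U$, and compose with the dominant map $f\times f \,:\, U\times U \,\longrightarrow\, \cat{X}\times \cat{X}$; over a general $(x_1,\, x_2)$ this yields a member of the family passing through suitable $y_1,\, y_2$ with $y_i\,\in\, U$ and $f(y_i)\,=\,x_i$. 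Alternatively, one may first replace $\cat{Y}$ by a smooth projective birational model — still rationally connected, and over which any finite set of points lies on one rational curve — and extend $f$ to a dominant rational map on it. Everything else (Chevalley's theorem that a dominant morphism has image containing a dense open, non-emptiness of general fibres, and L\"uroth's theorem) is standard.
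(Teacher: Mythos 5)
Your argument is correct. Note that the paper itself supplies no proof of Lemma \ref{lem:6.1}: it is simply asserted to be ``an easy consequence of the definition,'' so there is no written argument to compare against; yours is the standard direct-from-the-definition proof, carried out carefully. In particular, you correctly isolate the one genuinely non-trivial point, namely that the lifts $y_1,\, y_2 \,\in\, U$ of general points $x_1,\, x_2 \,\in\, \cat{X}$ are general only in the fibres of $f$ and not in $\cat{Y}$, so the definition of rational connectedness does not apply to them verbatim; this is exactly the gap that a naive reading of ``easy consequence of the definition'' glosses over. Both of your fixes are sound: the existence of an irreducible family of rational curves on $\cat{Y}$ with dominant two-point evaluation follows from the definition by the usual countability argument over $\C$ applied to the components of the space of maps $\p^1 \,\longrightarrow\, \cat{Y}$ (as in Koll\'ar's book, which the paper cites), and restricting the evaluation over $U \times U$ and composing with the dominant map $f\times f$ gives curves through suitable lifts of a general pair $(x_1,\, x_2)$; alternatively, passing to a smooth projective birational model (rational connectedness being a birational notion) lets you connect arbitrary points. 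The descent step — composing with $f$ on $g^{-1}(U)$, taking the closure of the image, and using normalization plus L\"uroth to see the image curve is rational — is also handled correctly, and appropriately so since $\cat{X}$ need not be proper (in the application it is the quasi-projective moduli space). In short, the proposal is complete and, if anything, more rigorous than what the paper records.
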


\begin{theorem}[{\cite[Theorem 1.1]{KS}}] 
The moduli space 
$\cat{U}(r,d)$ is birational to $J(X) \times \A^{(n^2-1)(g-1)}$, where $J(X)$ is the Jacobian of $X$.
\end{theorem}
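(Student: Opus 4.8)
The statement is the theorem of King and Schofield, and the plan is to follow their argument in \cite{KS}; note that the symbol $n$ there is the rank $r$, so the asserted affine factor has dimension $(r^2-1)(g-1)$, which together with $\dim J(X)=g$ accounts for $\dim \cat{U}(r,d)=r^2(g-1)+1$. The proof has two ingredients. First, the determinant morphism $\det\colon \cat{U}(r,d)\longrightarrow \mathrm{Pic}^d(X)$ has target a torsor under $J(X)$, hence isomorphic to $J(X)$ as a variety, and it realizes $\cat{U}(r,d)$ as a fibration over $J(X)$ whose fibre is the fixed-determinant moduli space $\cat{U}(r,\xi)$. Second, one shows this fibration is birationally a product $J(X)\times\cat{U}(r,\xi)$ and that $\cat{U}(r,\xi)$ is rational, i.e. birational to $\A^{(r^2-1)(g-1)}$; these two facts combined give the assertion.

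For the rationality statement I would argue by induction using the Euclidean algorithm on the pair $(r,d)$, which terminates because $\gcd(r,d)=1$, the terminal case being a rank-one moduli space — a point in the fixed-determinant setting. A general stable bundle $E$ of rank $r$ fits into an exact sequence $0\to F\to E\to G\to 0$ with $F$ a subbundle of some rank $k$ and $G$ a bundle of rank $r-k$, both of controlled (general, stable) type; taking $k$ suitably chosen — after the standard reductions $\cat{U}(r,d)\cong\cat{U}(r,d+r)$ by twisting and $\cat{U}(r,d)\cong\cat{U}(r,-d)$ by dualizing — exhibits $\cat{U}(r,d)$ birationally as a fibration over a space built from the moduli of $F$ and $G$ together with the choice of the subbundle, the fibre over a point being the projectivization $\p(\coh{1}{X}{\HOM{G}{F}})$ of the space of extension classes. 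Granting that this fibration is Zariski-locally trivial — so that its total space is birational to the product of the base with a projective space — and that the base is, by the inductive hypothesis, birational to $J(X)$ times an affine space (or just an affine space in the fixed-determinant version), one concludes that $\cat{U}(r,d)$ is birational to $J(X)\times\A^{N}$ for the appropriate $N$, which one checks equals $(r^2-1)(g-1)$ by a dimension count through the induction. Ingredient (i) above is then subsumed: the entire induction can be run relative to the base $\mathrm{Pic}^d(X)$, equivalently over its function field $\C(\mathrm{Pic}^d(X))$, which produces the $J(X)$ factor and simultaneously shows $\det$ is birationally trivial.

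The main obstacle is precisely the Zariski-local triviality of the projective (Brauer--Severi) bundles appearing at each stage: only étale-local triviality is automatic, because a universal family for the lower-rank moduli problem need not exist globally, and the obstruction is a class in the Brauer group of a base that is only known to be rational by induction. Showing that this class vanishes is where the coprimality hypothesis $\gcd(r,d)=1$ is genuinely used — it forces the relevant period to divide $1$ and lets one normalize the family — and this is the technical heart of \cite{KS}. The remaining points (generality ensuring stability of every bundle occurring in the construction, existence and irreducibility of the relevant families of subbundles via the theory of maximal subbundles, and the bookkeeping of degrees so that the Euclidean reduction actually terminates at rank one) are routine once this is in place.
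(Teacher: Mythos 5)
Your proposal is correct and coincides with the paper's approach: the paper gives no argument of its own, simply quoting this as Theorem 1.1 of King--Schofield \cite{KS} (with $n$ there being the rank $r$), which is exactly the route you follow. Your outline --- determinant fibration producing the $J(X)$ factor, Euclidean-algorithm induction via extension spaces $\p(\coh{1}{X}{\HOM{G}{F}})$, and the Brauer-class/coprimality issue as the technical heart --- is a faithful summary of the argument in \cite{KS}.
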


Note that $J(X)$ is not rationally connected,
because it does not contain any rational curve. 
Therefore, $\cat{U}(r,d)$ is not rationally connected.

\begin{proposition}
\label{thm:6.3}
The moduli space $\cat{M}_{\cat{L}}(r,d)$ is not rational.
\end{proposition}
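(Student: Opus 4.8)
The plan is to show that $\cat{M}_{\cat{L}}(r,d)$ dominates the non-rationally-connected variety $\cat{U}(r,d)$, and hence cannot be rationally connected; since every rational variety is rationally connected, this forces $\cat{M}_{\cat{L}}(r,d)$ to be non-rational. Concretely, first I would note that the Zariski open dense subset $\cat{M}'_{\cat{L}}(r,d)\,\subset\,\cat{M}_{\cat{L}}(r,d)$ carries the forgetful morphism $p\,:\,\cat{M}'_{\cat{L}}(r,d)\,\longrightarrow\,\cat{U}(r,d)$ of \eqref{eq:0.1}, which is surjective: by Proposition \ref{prop:3} it is a $\cat{P}$--torsor over $\cat{U}(r,d)$, so in particular every fibre $p^{-1}(E)$ is a non-empty affine space (indeed, by Proposition \ref{prop:1} every stable $E$ admits an integrable $\cat{L}$--connection since $\rk{\cat{L}}\,=\,1$ and $\deg{L^*}\,>\,2g-2$). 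Composing with the inclusion $\cat{M}'_{\cat{L}}(r,d)\,\hookrightarrow\,\cat{M}_{\cat{L}}(r,d)$ from \eqref{eq:i}, we obtain a dominant rational map $\cat{M}_{\cat{L}}(r,d)\,\dashrightarrow\,\cat{U}(r,d)$.

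Next I would invoke the fact, recorded just above the statement, that $\cat{U}(r,d)$ is birational to $J(X)\times\A^{(r^2-1)(g-1)}$ (by \cite[Theorem 1.1]{KS}), and that $J(X)$ is not rationally connected because it contains no rational curves; hence $\cat{U}(r,d)$ is not rationally connected. Now suppose, for contradiction, that $\cat{M}_{\cat{L}}(r,d)$ were rational. A rational variety is rationally connected, so $\cat{M}_{\cat{L}}(r,d)$ would be rationally connected. By Lemma \ref{lem:6.1}, applied to the dominant rational map $\cat{M}_{\cat{L}}(r,d)\,\dashrightarrow\,\cat{U}(r,d)$ constructed above, the target $\cat{U}(r,d)$ would then be rationally connected, contradicting the previous paragraph. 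Therefore $\cat{M}_{\cat{L}}(r,d)$ is not rational.

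There is essentially one place where care is needed: one must make sure the composite $\cat{M}_{\cat{L}}(r,d)\,\dashrightarrow\,\cat{U}(r,d)$ really is a dominant \emph{rational} map in the sense required by Lemma \ref{lem:6.1}. This is immediate because $\cat{M}'_{\cat{L}}(r,d)$ is Zariski open dense in $\cat{M}_{\cat{L}}(r,d)$ (by \cite[Theorem 2.8(A)]{M}, as noted in Section \ref{Comp}), the morphism $p$ is regular on this open set and surjective onto the irreducible projective variety $\cat{U}(r,d)$, so its image is dense. I do not expect any serious obstacle here; the only subtlety worth a sentence is noting that $\cat{M}_{\cat{L}}(r,d)$ is irreducible (Theorem \ref{thm:1}) so that "rational'' and "dominant rational map'' are unambiguous, and that one uses $g\,\geq\,2$ (in fact $g\,\geq\,3$ is assumed elsewhere but $g\,\geq\,2$ already suffices here) to guarantee $J(X)$ is a positive-dimensional abelian variety with no rational curves.
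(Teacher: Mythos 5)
Your argument is correct and is essentially the paper's own proof: both pass to the dense open subset $\cat{M}'_{\cat{L}}(r,d)$, use the forgetful map $p$ to $\cat{U}(r,d)$ together with Lemma \ref{lem:6.1}, and conclude from the fact that $\cat{U}(r,d)$ is birational to $J(X)\times \A^{(r^2-1)(g-1)}$ and hence not rationally connected. The extra details you supply (surjectivity of $p$ via the torsor structure, irreducibility) only make explicit what the paper leaves implicit.
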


\begin{proof}
We show that the moduli space $\cat{M}_{\cat{L}}(r,d)$ is not rationally connected. Let 
$$p\,:\, \cat{M}_{\cat{L}}'(r,d) \,\longrightarrow\, \cat{U}(r,d)$$
be the morphism of varieties defined in \eqref{eq:7}.
Suppose that $\cat{M}'_{\cat{L}}(r,d)$ is rationally connected. Then, from Lemma \ref{lem:6.1}, 
$\cat{U}(r,d)$ is rationally connected, which is not true. 
Since $\cat{M}'_{lc}(n,d)$ is an open dense subset of 
$\cat{M}_{lc}(n,d)$, it follows that $\cat{M}_{lc}(n,d)$ is not rational.
\end{proof}

\begin{lemma}[{\cite[Corollary 1.3]{GHS}}]
\label{lem:6.4}
Let $f\,:\, \cat{X} \,\longrightarrow\, \cat{Y}$ be any dominant morphism 
of complex varieties. If $\cat{Y}$ and the general 
fiber of $f$ are rationally connected, then $\cat{X}$
is rationally connected.
\end{lemma}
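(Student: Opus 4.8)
The statement is Corollary~1.3 of \cite{GHS}, so the plan is to recall how it follows from the main theorem of Graber--Harris--Starr: \emph{every proper morphism from a complex variety onto a smooth curve whose general fibre is rationally connected admits a section}. This section theorem is the deep input, and I would simply quote it. Besides it I would use two standard facts: rational connectedness is a birational invariant among smooth projective varieties, and a smooth projective variety is rationally connected precisely when two general points can be joined by a connected chain of rational curves (Koll\'ar--Miyaoka--Mori).

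First I would reduce to the situation where $\cat{X}$ and $\cat{Y}$ are smooth projective and $f$ is a surjective projective morphism: replace $\cat{Y}$ by a smooth projective model, and $\cat{X}$ by a resolution of singularities of a projective closure of the graph of $f$ over it; none of these operations affects the rational connectedness of $\cat{X}$ or of $\cat{Y}$, nor, over a dense open subset of $\cat{Y}$, the general fibre of $f$. By generic smoothness (characteristic zero) there is a dense open $U\subseteq\cat{Y}$ over which $f$ is smooth with rationally connected fibres; set $B\,:=\,\cat{Y}\setminus U$.

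Now fix general points $x_1,x_2\in\cat{X}$ and put $y_i\,=\,f(x_i)\in U$. Since $\cat{Y}$ is rationally connected, there is a very free rational curve $\nu\,:\,\mathbb{P}^1\,\longrightarrow\,\cat{Y}$ passing through $y_1$ and $y_2$; as $y_1\notin B$, the pullback $\nu^{-1}(B)\subset\mathbb{P}^1$ is finite. Form $\cat{X}_C\,:=\,\cat{X}\times_{\cat{Y}}\mathbb{P}^1$ with its projection $g\,:\,\cat{X}_C\,\longrightarrow\,\mathbb{P}^1$; over a neighbourhood of each point of $\nu^{-1}(y_i)$ the space $\cat{X}_C$ is smooth, because $f$ is smooth there, and it contains a point $\widetilde{x}_i$ mapping to $x_i$, while for all but finitely many $t$ the fibre $g^{-1}(t)$ is a general fibre of $f$, hence rationally connected. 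After resolving $\cat{X}_C$ away from the fibres over $y_1,y_2$, the Graber--Harris--Starr section theorem applies to $g$ and yields a section $\sigma\,:\,\mathbb{P}^1\,\longrightarrow\,\cat{X}_C$; composing with $\cat{X}_C\to\cat{X}$ gives a rational curve $C'\subset\cat{X}$ that meets $f^{-1}(y_1)$ in a point $p_1$ and $f^{-1}(y_2)$ in a point $p_2$.

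Finally I would chain rational curves: inside the rationally connected fibre $f^{-1}(y_1)$ join $x_1$ to $p_1$ by a connected chain of rational curves, and likewise join $p_2$ to $x_2$ inside $f^{-1}(y_2)$; concatenating these two chains with $C'$ produces a connected chain of rational curves in $\cat{X}$ through $x_1$ and $x_2$. Since $x_1,x_2$ were general, $\cat{X}$ is rationally connected by the chain criterion. The main obstacle is entirely contained in the section theorem of \cite{GHS}, proved via deformation theory of stable maps together with a degeneration argument, which is not something to reprove here; within the reduction above, the only points needing care are checking that the modifications of $\cat{X}$ preserve the fibres over $y_1,y_2$ together with the lifts $\widetilde{x}_i$, and that a general very free curve in $\cat{Y}$ through $y_1,y_2$ does not meet $B$ at those two points, which holds since $y_i\in U$.
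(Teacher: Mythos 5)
The paper offers no proof of this lemma at all: it is imported verbatim as Corollary 1.3 of \cite{GHS}, so there is no internal argument to compare yours against. What you have written is essentially the standard deduction of that corollary from the main theorem of Graber--Harris--Starr (the section theorem for proper families over a curve with rationally connected general fibre), which is also how it is derived in \cite{GHS} itself: pass to smooth projective models, run a very free rational curve through the images $y_1,y_2$ of two general points, base change the family to that $\mathbb{P}^1$, invoke the section theorem to produce a horizontal rational curve meeting the fibres over $y_1$ and $y_2$, and close up the chain inside those two rationally connected fibres, concluding with the Koll\'ar--Miyaoka--Mori equivalence of rational chain connectedness and rational connectedness for smooth projective varieties. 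Your handling of the generic smoothness locus and of the finiteness of $\nu^{-1}(B)$ is correct, and you rightly isolate the hard content in the quoted section theorem rather than attempting to reprove it.

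The one point deserving an explicit word is the opening reduction. For a possibly singular, possibly non-proper $\cat{X}$, ``rationally connected'' has to be interpreted through a smooth projective model (equivalently, the connecting chains are allowed to live there), because a complete rational curve through two general points of the model may leave the original variety: for instance $\mathbb{A}^1\times\mathbb{P}^1$ contains no complete rational curve joining two points with distinct first coordinates, although its smooth projective model $\mathbb{P}^1\times\mathbb{P}^1$ is rationally connected. This is the convention implicitly in force both in \cite{GHS} and in Section \ref{Div} of the paper, where rational connectedness is transported across birational maps and dense open immersions, so once you state it your reduction to smooth projective $\cat{X}$, $\cat{Y}$ is legitimate and the proof is complete.
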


\begin{proposition}
\label{prop:6.5}
The moduli space $\cat{M}_{\cat{L}}(r,\xi)$ is rationally connected.
\end{proposition}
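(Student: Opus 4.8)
The plan is to realise $\cat{M}_{\cat{L}}(r,\xi)$, up to a Zariski open dense subset, as the total space of a fibration over $\cat{U}(r,\xi)$ all of whose fibres are rationally connected, and then to apply the Graber--Harris--Starr criterion recorded in Lemma \ref{lem:6.4}. The first reduction is to $\cat{M}'_{\cat{L}}(r,\xi)$: by \eqref{eq:a39} this is a Zariski dense open subvariety of $\cat{M}_{\cat{L}}(r,\xi)$, and since rational connectedness is tested on general pairs of points while a rational curve joining two points of $\cat{M}'_{\cat{L}}(r,\xi)$ is a fortiori a rational curve in $\cat{M}_{\cat{L}}(r,\xi)$, it suffices to show that $\cat{M}'_{\cat{L}}(r,\xi)$ is rationally connected.

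The next step uses the torsor structure. By Proposition \ref{prop:4}, the forgetful morphism $q\,:\,\cat{M}'_{\cat{L}}(r,\xi)\,\longrightarrow\,\cat{U}(r,\xi)$ of \eqref{eq:a40} is a $\cat{P}_\xi$--torsor, hence a surjective, locally trivial fibre bundle whose fibres are affine spaces $\A^N$ with $N\,=\,(r^2-1)(\deg{L^*}-g+1)$; this is also visible from Proposition \ref{prop:5}, where $\cat{M}'_{\cat{L}}(r,\xi)$ appears as the complement of the hyperplane at infinity in the projective bundle $\p(\cat{F}_\xi)\to\cat{U}(r,\xi)$. In particular every fibre of $q$ is an affine space, and hence rationally connected. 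On the base side, since $r$ and $d=\deg\xi$ are coprime the moduli space $\cat{U}(r,\xi)$ of stable bundles with fixed determinant is a rational variety by the theorem of King--Schofield \cite{KS}, and therefore rationally connected; alternatively, it suffices to use that $\cat{U}(r,\xi)$ is smooth, projective and unirational, which is classical, together with the fact that a smooth projective unirational variety is rationally connected.

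Finally, $q$ is dominant, its base $\cat{U}(r,\xi)$ is rationally connected and its general fibre $\A^N$ is rationally connected, so Lemma \ref{lem:6.4} gives that $\cat{M}'_{\cat{L}}(r,\xi)$ is rationally connected; by the reduction of the first paragraph, $\cat{M}_{\cat{L}}(r,\xi)$ is then rationally connected as well. I expect the only non-formal point to be the rational connectedness of the base: one must use it for the \emph{fixed-determinant} space $\cat{U}(r,\xi)$ and must not confuse it with $\cat{U}(r,d)$, which --- being birational to $J(X)\times\A^{(r^2-1)(g-1)}$ by the theorem quoted above --- is \emph{not} rationally connected (as observed just before Proposition \ref{thm:6.3}). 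Every other step is a formal consequence of the $\cat{P}_\xi$--torsor structure of $q$ and of Lemma \ref{lem:6.4}.
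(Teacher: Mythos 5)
Your proof is correct and follows essentially the same route as the paper: reduce to the dense open subset $\cat{M}'_{\cat{L}}(r,\xi)$, use that the fibres of $q$ are affine spaces, and apply Lemma \ref{lem:6.4} together with the rational connectedness of $\cat{U}(r,\xi)$. The only difference is that you explicitly justify the rational connectedness of the fixed-determinant base (via King--Schofield rationality, respectively unirationality), a point the paper simply asserts, and you correctly flag that one must not substitute $\cat{U}(r,d)$ here.
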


\begin{proof}
Consider the dominant  morphism 
$$q\,:\, \cat{M}'_{\cat{L}}(r,\xi)\, \longrightarrow\, \cat{U}(r,\xi)$$
defined in \eqref{eq:a40}. As observed earlier, every 
fiber of $q$ is an affine space and hence it is rationally 
connected. Since $\cat{U}(r,\xi)$ is rationally connected, $\cat{M}'_{\cat{L}}(r,\xi)$ is 
rationally connected, which follows from Lemma \ref{lem:6.4}.
Now rationally connectedness is a birational invariant, and $\cat{M}'_{\cat{L}}(r,\xi)$ is a dense open subset of 
$\cat{M}_{\cat{L}}(r,\xi)$.
\end{proof}

Therefore, we have  a natural question.
\begin{question}
\label{q:1}
Is the moduli space $\cat{M}_{\cat{L}}(r,\xi)$ rational?
\end{question}

Next, we discuss the numerical effectiveness of the divisor at infinity. In view of Proposition \ref{prop:5}, we have a compactification 
$\p (\cat{F}_\xi)$ of the moduli space $\cat{M}'_{\cat{L}}(r,\xi)$ such that the complement ${\bf H_0} \,:=\, \p (\cat{F}_\xi) \setminus 
\cat{M}'_{\cat{L}}(r,\xi)$ is the hyperplane at infinity, and we call ${\bf H_0}$ the divisor at infinity.

\begin{proposition}
\label{prop:1.5}
Let ${\bf H_0} \,:=\, \p (\cat{F}_\xi) \setminus \cat{M}'_{\cat{L}}(r,\xi)$ be the smooth divisor.
Then,
the divisor ${\bf H_0}$ is numerically effective if and only if the vector bundle bundle $\cat{P}^*_\xi$ over the moduli
space $\cat{U}(r,\xi)$ is numerically effective. 
\end{proposition}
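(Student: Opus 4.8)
The plan is to identify the projective bundle $\p(\cat{F}_\xi)$ with the projectivization of a vector bundle built from the $\cat{P}_\xi^*$--Atiyah bundle, so that the divisor ${\bf H_0}$ becomes the tautological hyperplane at infinity in a standard projective compactification of an affine bundle. Recall from Proposition \ref{prop:4} that $\cat{M}'_{\cat{L}}(r,\xi)$ is a $\cat{P}_\xi$--torsor over $\cat{U}(r,\xi)$, and from the construction in the proof of Theorem \ref{thm:1.1} (adapted as in Proposition \ref{prop:5}) that $\cat{F}_\xi$ is the rank $(r^2-1)(\deg L^*-g+1)+1$ vector bundle obtained by placing the affine bundle inside $\mathrm{GL}$ via the standard inclusion of the affine group. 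Concretely, if $\cat{W}$ denotes the vector bundle with $\cat{P}_\xi$--torsor structure corresponding to our torsor, then $\cat{F}_\xi \,\cong\, \cat{W}^* \oplus \struct{\cat{U}(r,\xi)}$ up to the conventions of projectivization, and the hyperplane ${\bf H_0}$ is precisely $\p(\cat{W}^*) \,\subset\, \p(\cat{W}^* \oplus \struct{}) \,=\, \p(\cat{F}_\xi)$, i.e. the hyperplane cut out by the last coordinate. The first step, then, is to make this identification precise and to record that $\struct{\p(\cat{F}_\xi)}({\bf H_0})$ restricted to ${\bf H_0} \,=\, \p(\cat{W}^*)$ is the relative hyperplane bundle $\struct{\p(\cat{W}^*)}(1)$ twisted by a line bundle pulled back from $\cat{U}(r,\xi)$, and analogously on all of $\p(\cat{F}_\xi)$.

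Once this is set up, numerical effectiveness of ${\bf H_0}$ on $\p(\cat{F}_\xi)$ is tested against curves. A curve in $\p(\cat{F}_\xi)$ either lies in a fiber of $\Psi \,:\, \cat{F}_\xi \,\longrightarrow\, \cat{U}(r,\xi)$ or maps nontrivially to $\cat{U}(r,\xi)$; on fibers ${\bf H_0}$ is a hyperplane in a projective space and meets every curve nonnegatively, so the fiber curves impose no condition. The content is therefore in curves dominating their image in $\cat{U}(r,\xi)$, and here one reduces via the projection formula and the structure of $\Pic{\p(\cat{F}_\xi)}$ in \eqref{eq:a31}--\eqref{eq:a33} to intersection numbers on $\cat{U}(r,\xi)$ together with the positivity of $\struct{}(1)$ along the $\p$--fiber directions. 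The key observation is the classical fact (see, e.g., the treatment of nefness of projective bundles) that for $\p(\cat{E} \oplus \struct{})$ with tautological quotient conventions, the class of the section-at-infinity hyperplane $\p(\cat{E})$ is nef if and only if $\cat{E}$ is nef: one direction is immediate by restricting to $\p(\cat{E})$ and using that $\struct{\p(\cat{E})}(1)$ being nef is the definition of $\cat{E}$ nef, and the converse uses that any curve in $\p(\cat{E} \oplus \struct{})$ can be perturbed or directly compared to curves in $\p(\cat{E})$, together with the fact that the remaining generators of the cone of curves pair nonnegatively with ${\bf H_0}$. Applying this with $\cat{E} \,=\, \cat{W}^* \cong \cat{P}_\xi^*$ (using that the vector bundle underlying the $\cat{P}_\xi$--torsor is, up to dualization, $\cat{P}_\xi$ itself, whose dual is $\cat{P}_\xi^*$) yields the stated equivalence.

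The main obstacle I anticipate is bookkeeping the duals and the affine-group-to-$\mathrm{GL}$ embedding so that the vector bundle whose nefness governs ${\bf H_0}$ is exactly $\cat{P}_\xi^*$ and not $\cat{P}_\xi$ or some twist thereof: the construction in Theorem \ref{thm:1.1} passes through $\p(\cat{F})$ with $\cat{F}$ of rank one more than the affine bundle, and one must check that the hyperplane-at-infinity in that construction is the projectivization of the dual of the modelling vector bundle, consistent with Grothendieck's $\p$ of quotients versus $\p$ of sub-one-dimensional-subspaces convention used throughout Section \ref{Comp}. A secondary technical point is justifying that it suffices to test nefness of ${\bf H_0}$ against curves not contained in ${\bf H_0}$ and against curves inside ${\bf H_0}$ separately, and that on ${\bf H_0} \,=\, \p(\cat{P}_\xi^*)$ the normal-bundle twist $\struct{\p(\cat{F}_\xi)}({\bf H_0})|_{{\bf H_0}}$ is precisely $\struct{\p(\cat{P}_\xi^*)}(1)$ with no extra pullback twist — this is where the splitting $\cat{F}_\xi \cong \cat{P}_\xi^* \oplus \struct{}$ (rather than a nontrivial extension) is essential, and it in turn is exactly what the affine-group inclusion provides. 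Modulo these identifications, the equivalence follows formally from the standard nefness criterion for $\p(\cat{E}\oplus\struct{})$.
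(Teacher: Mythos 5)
Your overall skeleton (curves inside ${\bf H_0}$ versus curves not contained in it, and reducing the former to the restriction of $\struct{\p(\cat{F}_\xi)}({\bf H_0})$ to ${\bf H_0}$) is sound, and you do land on the correct bundle $\cat{P}^*_\xi$. But there is a genuine gap at the step you yourself call essential: the claimed splitting $\cat{F}_\xi \,\cong\, \cat{P}^*_\xi \oplus \struct{\cat{U}(r,\xi)}$ is not what the affine-group-into-$\mathrm{GL}(n+1)$ construction provides. That construction produces an \emph{extension} (of $\struct{\cat{U}(r,\xi)}$ by the modelling bundle, or dually of $\cat{P}^*_\xi$ by $\struct{\cat{U}(r,\xi)}$ on the bundle of affine-linear functions) whose extension class is exactly the class of the torsor $q\,:\,\cat{M}'_{\cat{L}}(r,\xi)\,\longrightarrow\,\cat{U}(r,\xi)$ in $\coh{1}{\cat{U}(r,\xi)}{\cat{P}_\xi}$ (Proposition \ref{prop:2}). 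It splits compatibly with the boundary only if that torsor is trivial, i.e.\ only if $q$ admits an algebraic section; and triviality is not available here -- indeed it would identify $\cat{M}'_{\cat{L}}(r,\xi)$ with the total space of $\cat{P}_\xi$, which carries many more global functions (e.g.\ from the Hitchin map) than allowed by Theorem \ref{thm1}. So the ingredient you invoke as "exactly what the affine-group inclusion provides" is neither provided nor, in general, true, and the reduction to the nefness criterion for $\p(\cat{E}\oplus\struct{})$ does not go through as stated.

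The good news is that the splitting is not needed, and this is precisely how the paper argues. Since ${\bf H_0}$ is effective, only curves lying in ${\bf H_0}$ matter, and for those one needs only the restricted line bundle, which by the Poincar\'e adjunction formula is the normal bundle $\cat{N}_{\p(\cat{F}_\xi)/{\bf H_0}}$. The paper then identifies ${\bf H_0}$ with $\p(\cat{P}^*_\xi)$ and the normal bundle with the tautological bundle $\struct{\p(\cat{P}^*_\xi)}(1)$ fiberwise and canonically, via the map sending an affine-linear function $f$ on the fiber $q^{-1}(E)$ to its differential $\text{d}f \,\in\, \cat{P}^*_\xi(E)$; this computation is valid for an arbitrary, possibly non-split, extension, and it also settles the dual/convention bookkeeping that you flagged but left unresolved (it is the differential map, not an abstract direct-sum decomposition, that pins down $\cat{P}^*_\xi$ rather than $\cat{P}_\xi$ or a twist). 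Replacing your splitting claim by this normal-bundle identification repairs the argument and essentially reproduces the paper's proof.
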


\begin{proof}\label{proof_prop_1.5}
Recall that  the vector bundle $\cat{P}^*_\xi$ is 
numerically effective if and only if the tautological 
line bundle $\struct{\p (\cat{P}^*_\xi)}(1)$ is 
numerically effective.
Let $$\Psi \,:\,\cat{F}_\xi \,\longrightarrow \,\cat{U}(r, \xi)$$ be the vector bundle  in Proposition \ref{prop:5}. Then,
the  divisor ${\bf H_0}$ is numerically effective if and only if the restriction of the line bundle
$\struct{\p (\cat{F}_\xi)}({\bf H_0})$ to ${\bf H_0}$ is numerically effective.

First we show that the divisor $\bf{H}_0$ is canonically 
isomorphic to projective bundle $\p ( \cat{P}^*_\xi)$.
Let 
\begin{equation*}
\label{eq:a61}
\widetilde{\Psi} \,:\, \p (\cat{F}_\xi) \,\longrightarrow\, \cat{U}(r,\xi)
\end{equation*}
be the projective bundle. Let $E \,\in\, \cat{U}(r,\xi)$, and 
\begin{equation}
\label{eq:a62}
\theta \,\in\, \widetilde{\Psi}^{-1}(E) \cap \bf{H}_0\, \subset\, 
\p (\cat{F}_\xi).
\end{equation}
Then, $\theta$ represents a hyperplane in the fiber 
$\cat{F}_\xi(E) \,=\, q^{-1}(E)^{\vee}$ of the vector bundle $\cat{F}_\xi$, where $q$ is defined in 
\eqref{eq:a40}. Let $H_{\theta}$ denote this hyperplane represented by $\theta$.
Note that $H_{\theta} \,\subset\, q^{-1}(E)^{\vee}$ and $q^{-1}(E)$ is the affine space modelled 
over the vector space $\coh{0}{X}{\ad{E} \otimes L^*}$
which is a fiber of the vector bundle $\cat{P}_\xi$ over $E$.
Therefore
for any $f \,\in\, H_{\theta}$, that is, for an affine linear map $f \,:\, q^{-1}(E) \,\longrightarrow\, \C$, we have 
$$\text{d}f \in (\cat{P}_\xi(E))^*\, = \,\cat{P}^*_\xi(E).$$
Since $\theta \,\in\, \bf{H}_0$, and $H_\theta$ is a hyperplane, the subspace of $\cat{P}^*_\xi(E)$
generated by $\{\text{d}f\}_{f \in H_{\theta}}$ is a
hyperplane in  $ \cat{P}^*_\xi(E)$. Let $\widetilde{\theta} \,\in\, \p ( \cat{P}^*_\xi)$ denote this hyperplane.
Thus, we get a canonical isomorphism 
\begin{equation}
\label{eq:a63}
{\bf H}_0 \,\cong\, \p (\cat{P}^*_\xi)
\end{equation}
by sending $\theta$ to $\widetilde{\theta}$.

Let $\cat{N}_{\p(\cat{F}_\xi)/{\bf H_0}}$ denote the normal 
bundle of the divisor ${\bf H_0} \,\subset \,\p (\cat{F}_\xi)$.
Note that from Poincar\'e adjunction formula we have the following:
\begin{equation}
\label{eq:a60}
\struct{\p (\cat{F}_\xi)}({\bf H_0}) \vert_{\bf H_0} \,\cong\, \cat{N}_{\p(\cat{F}_\xi)/{\bf H_0}}.
\end{equation}
So, ${\bf H_0}$ is numerically effective if and only if the normal bundle $\cat{N}_{\p(\cat{F}_\xi)/{\bf H_0}}$
is numerically effective. 

Thus, to prove the proposition it is enough to show that 
the normal bundle $\cat{N}_{\p(\cat{F}_\xi)/\bf{H}_0}$ is canonically isomorphic to the tautological 
line bundle $\struct{\p ( \cat{P}^*_\xi)}(1)$. 
 
Next, note that the fiber $\cat{N}_{\p(\cat{F}_\xi)/{\bf H_0}}(\theta)$ of the normal bundle $\cat{N}_{\p(\cat{F}_\xi)/ {\bf H_0}}$
is canonically isomorphic to the quotient $\cat{F}_\xi(E) / H_{\theta}$. 
Consider the morphism 
$$\cat{F}_\xi(E) \,\longrightarrow\, \cat{P}^*_\xi(E)$$
between vector spaces defined by $f \,\longmapsto\, \text{d} f$.
Since the image of the hyperplane $H_{\theta}$ is contained in $\widetilde{\theta}$, we have a well defined morphism on quotients 
$$\cat{F}_\xi(E)/ H_{\theta} \,\longrightarrow\, \cat{P}^*_\xi(E)/ \widetilde{\theta},$$
which is an isomorphism. Recall that the fiber of the tautological line bundle 
$\struct{\p (\cat{P}^*_\xi)}(1)$ at $E$ is canonically 
identified with the quotient $\cat{P}^*_\xi(E)/ \widetilde{\theta}$.
This completes the proof.
\end{proof}

A similar result can be proved for the smooth divisor on the compactification of the moduli space $\cat{M}_{\cat{L}}'(r,d)$.  Let ${\bf 
H} \,:= \,\p ( \cat{F}) \setminus \cat{M}'_{\cat{L}}(r,d)$ be the smooth divisor at infinity, where $\p (\cat{F})$ is the compactification of 
the moduli space $\cat{M}'_{\cat{L}}(r,d)$ as in Theorem \ref{thm:1.1}. Then using the same steps as in proof of Proposition 
\ref{prop:1.5}, we can show the following:

\begin{proposition}
\label{prop:8}
The smooth divisor $\bf{H}$ is numerically effective 
if and only if the vector bundle $ \cat{P}^*$ is numerically effective, where $\cat{P}^*$ is the dual of the vector bundle defined in \eqref{eq:6}.
\end{proposition}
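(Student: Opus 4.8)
The plan is to mimic, step by step, the proof of Proposition \ref{prop:1.5}, replacing $\cat{U}(r,\xi)$ by $\cat{U}(r,d)$, the torsor $\cat{M}'_{\cat{L}}(r,\xi)$ by $\cat{M}'_{\cat{L}}(r,d)$, the vector bundle $\cat{P}_\xi$ by $\cat{P}$ of \eqref{eq:6}, and $\cat{F}_\xi$ by the vector bundle $\cat{F}$ of Theorem \ref{thm:1.1}. The first reduction is standard: $\cat{P}^*$ is numerically effective if and only if the tautological line bundle $\struct{\p(\cat{P}^*)}(1)$ is numerically effective; and by the Poincar\'e adjunction formula the divisor $\bf{H}$ is numerically effective if and only if its normal bundle $\cat{N}_{\p(\cat{F})/\bf{H}}\,\cong\, \struct{\p(\cat{F})}(\bf{H})\vert_{\bf{H}}$ is numerically effective. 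So the whole statement reduces to producing a canonical isomorphism $\cat{N}_{\p(\cat{F})/\bf{H}}\,\cong\,\struct{\p(\cat{P}^*)}(1)$ together with the identification $\bf{H}\,\cong\,\p(\cat{P}^*)$.

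The second step is to establish that isomorphism fibrewise over $\cat{U}(r,d)$ and check it globalizes. Fix $E\,\in\,\cat{U}(r,d)$ and a point $\theta$ of $\bf{H}$ lying over $E$; then $\theta$ is a hyperplane $H_\theta$ in the fiber $\cat{F}(E)\,=\,p^{-1}(E)^{\vee}$, where $p$ is the map in \eqref{eq:0.1} and $p^{-1}(E)$ is the affine space modelled on $\coh{0}{X}{\ENd{E}\otimes L^*}\,=\,\cat{P}(E)$ (this is exactly the content of Proposition \ref{prop:3} and Lemma \ref{lem:2}). Each affine-linear functional $f\,:\,p^{-1}(E)\longrightarrow\C$ has a well-defined differential $\mathrm{d}f\,\in\,\cat{P}(E)^*\,=\,\cat{P}^*(E)$, and since $\theta$ lies on the hyperplane at infinity $\bf{H}$ — rather than on an affine chart — the functionals in $H_\theta$ have differentials spanning a hyperplane $\widetilde\theta\,\subset\,\cat{P}^*(E)$. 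The assignment $\theta\,\longmapsto\,\widetilde\theta$ gives the canonical isomorphism $\bf{H}\,\cong\,\p(\cat{P}^*)$. For the normal bundle one uses that $\cat{N}_{\p(\cat{F})/\bf{H}}(\theta)\,\cong\,\cat{F}(E)/H_\theta$, that $f\,\longmapsto\,\mathrm{d}f$ descends to a map $\cat{F}(E)/H_\theta\longrightarrow\cat{P}^*(E)/\widetilde\theta$ which is an isomorphism of one-dimensional spaces, and that the right side is by definition the fiber of $\struct{\p(\cat{P}^*)}(1)$ at $\widetilde\theta$. Naturality in $E$ is immediate because every construction involved — the torsor structure, the differential, the passage to quotients — is functorial and algebraic, so the fibrewise isomorphism is the fiber of a genuine isomorphism of line bundles over $\bf{H}$.

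Concatenating the two reductions then finishes the proof: $\bf{H}$ numerically effective $\iff$ $\cat{N}_{\p(\cat{F})/\bf{H}}$ numerically effective $\iff$ $\struct{\p(\cat{P}^*)}(1)$ numerically effective $\iff$ $\cat{P}^*$ numerically effective. I do not expect any genuinely new obstacle here: the argument is essentially identical to that of Proposition \ref{prop:1.5}, the only difference being the presence of the Jacobian factor in $\cat{U}(r,d)$ (rather than the fixed-determinant variety $\cat{U}(r,\xi)$), which plays no role in the normal-bundle computation. The one point that warrants a line of care is making sure the ranks match up — $\cat{F}$ has rank $r^2(\deg L^*-g+1)+1$ by Theorem \ref{thm:1.1}, so $\p(\cat{F})$ has fiber dimension one more than $\p(\cat{P}^*)$, which is exactly what is needed for $\bf{H}$ to be a divisor and for the "hyperplane at infinity" description to be consistent. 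Thus, no real difficulty is anticipated, and the proof can be written simply by citing the proof of Proposition \ref{prop:1.5} verbatim with the substitutions above.
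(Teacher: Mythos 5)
Your proposal is correct and is exactly what the paper does: the paper proves Proposition \ref{prop:8} simply by asserting that the argument of Proposition \ref{prop:1.5} carries over verbatim with the substitutions $\cat{U}(r,\xi)\rightsquigarrow\cat{U}(r,d)$, $\cat{P}_\xi\rightsquigarrow\cat{P}$, $\cat{F}_\xi\rightsquigarrow\cat{F}$, which is precisely the reduction you spell out (identification ${\bf H}\cong\p(\cat{P}^*)$, normal bundle $\cong\struct{\p(\cat{P}^*)}(1)$ via Poincar\'e adjunction, then the chain of nef equivalences). Your version is, if anything, more explicit than the paper's.
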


\section*{Acknowledgements} 

The second author is thankful  to  David Alfaya for going through an earlier version of the manuscript. 
The first
author is partially supported by a J. C. Bose Fellowship (JBR/2023/000003). The second author is partially supported by SERB SRG Grant SRG/2023/001006.

\end{document}